\DeclarePairedDelimiterX{\inp}[2]{\langle}{\rangle}{#1, #2}
\newcommand*\bigcdot{\mathpalette\bigcdot@{.5}}
\newcommand*\bigcdot@[2]{\mathbin{\vcenter{\hbox{\scalebox{#2}{$\m@th#1\bullet$}}}}}
\newcommand{\muspace}{\mspace{1mu}}
\DeclareRobustCommand{\scond}{\mathchoice{\muspace\vert\muspace}{\vert}{\vert}{\vert}}
\DeclareRobustCommand{\discint}{\mathchoice{\mspace{-1.5mu}:\mspace{-1.5mu}}{\mspace{-1.5mu}:\mspace{-1.5mu}}{:}{:}}
\newcommand{\suchthat}{\mathchoice{\colon}{\colon}{:\mspace{1mu}}{:}}
\def\Var{\mathop{\rm Var}\nolimits}%
\newcommand{\Dc}{\mathcal{D}}
\newcommand{\Kc}{\mathcal{K}}
\newcommand{\Lc}{\mathcal{L}}
\newcommand{\Nc}{\mathcal{N}}
\newcommand{\Vc}{\mathcal{V}}
\newcommand{\Xc}{\mathcal{X}}
\newcommand{\Yc}{\mathcal{Y}}
\renewcommand{\Pr}{\mathscr{P}}
\newcommand{\Xv}{{\bf X}}
\newcommand{\ph}{{\hat{p}}}
\newcommand{\rb}{{\mathbf r}}
\newcommand{\Xb}{{\mathbf X}}
\newcommand{\Yb}{\mathbf{Y}}
\newcommand{\Yh}{{\hat{Y}}}
\newcommand{\gh}{{\hat{g}}}
\newcommand{\pt}{{\tilde{p}}}
\def\a{\alpha}
\def\b{\beta}
\def\g{\gamma}
\def\d{\delta}
\def\eps{\epsilon}
\def\Th{\Theta}
\DeclareMathOperator\E{\mathsf{E}}
\let\P\relax
\DeclareMathOperator\P{\mathsf{P}}
\DeclareMathOperator\Prob{\mathrm{Pr}}
\newcommand{\Binom}{\mathrm{Binom}}
\newcommand\eg{e.g.,\xspace}
\newcommand\ie{i.e.,\xspace}
\def\textiid{i.i.d.\@\xspace}
\newcommand\iid{\ifmmode\text{ i.i.d. } \else \textiid \fi}
\newcommand{\Real}{\mathbb{R}}
\newcommand{\Natural}{\mathbb{N}}
\newcommand{\Integer}{\mathbb{Z}}
\newcommand{\ones}{\mathds{1}}
\newcommand{\half}{\frac{1}{2}}%
\def\mathllap{\mathpalette\mathllapinternal}
\def\mathllapinternal#1#2{%
  \llap{$\mathsurround=0pt#1{#2}$}}
\def\clap#1{\hbox to 0pt{\hss#1\hss}}
\def\mathclap{\mathpalette\mathclapinternal}
\def\mathclapinternal#1#2{%
  \clap{$\mathsurround=0pt#1{#2}$}}
\let\oldstackrel\stackrel
\renewcommand{\stackrel}[2]{\oldstackrel{\mathclap{#1}}{#2}}
\DeclarePairedDelimiterX{\infdivx}[2]{(}{)}{%
  #1\;\delimsize\|\;#2%
}
\renewcommand{\hbar}{h\mathllap{\overline{\vphantom{h}\hphantom{\rule{4.6pt}{0pt}}}\mspace{0.77mu}}}
\newcommand{\urltilde}{\kern -.06em\lower -.06em\hbox{~}\kern .02em}
\DeclarePairedDelimiterX{\norm}[1]{\lVert}{\rVert}{#1}
\DeclarePairedDelimiterX{\abs}[1]{\lvert}{\rvert}{#1}
\newcommand*\diff{\mathop{}\!\mathrm{d}}
\let\oldpartial\partial
\renewcommand*{\partial}{\mathop{}\!\oldpartial}
\newcommand{\defeq}{\mathrel{\mathop{:}}=}
\newcommand{\supp}{\textnormal{supp}}
\newcommand{\numberthis}{\addtocounter{equation}{1}\tag{\theequation}}
\let\save@mathaccent\mathaccent
\newcommand*\if@single[3]{%
  \setbox0\hbox{${\mathaccent"0362{#1}}^H$}%
  \setbox2\hbox{${\mathaccent"0362{\kern0pt#1}}^H$}%
  \ifdim\ht0=\ht2 #3\else #2\fi
  }
\newcommand*\rel@kern[1]{\kern#1\dimexpr\macc@kerna}
\newcommand*\wideaccent[2]{\@ifnextchar^{{\wide@accent{#1}{#2}{0}}}{\wide@accent{#1}{#2}{1}}}
\newcommand*\wide@accent[3]{\if@single{#2}{\wide@accent@{#1}{#2}{#3}{1}}{\wide@accent@{#1}{#2}{#3}{2}}}
\newcommand*\wide@accent@[4]{%
  \begingroup
  \def\mathaccent##1##2{%
    \let\mathaccent\save@mathaccent
    \if#42 \let\macc@nucleus\first@char \fi
    \setbox\z@\hbox{$\macc@style{\macc@nucleus}_{}$}%
    \setbox\tw@\hbox{$\macc@style{\macc@nucleus}{}_{}$}%
    \dimen@\wd\tw@
    \advance\dimen@-\wd\z@
    \divide\dimen@ 3
    \@tempdima\wd\tw@
    \advance\@tempdima-\scriptspace
    \divide\@tempdima 10
    \advance\dimen@-\@tempdima
    \ifdim\dimen@>\z@ \dimen@0pt\fi
    \rel@kern{0.6}\kern-\dimen@
    \if#41
      #1{\rel@kern{-0.6}\kern\dimen@\macc@nucleus\rel@kern{0.4}\kern\dimen@}%
      \advance\dimen@0.4\dimexpr\macc@kerna
      \let\final@kern#3%
      \ifdim\dimen@<\z@ \let\final@kern1\fi
      \if\final@kern1 \kern-\dimen@\fi
    \else
      #1{\rel@kern{-0.6}\kern\dimen@#2}%
    \fi
  }%
  \macc@depth\@ne
  \let\math@bgroup\@empty \let\math@egroup\macc@set@skewchar
  \mathsurround\z@ \frozen@everymath{\mathgroup\macc@group\relax}%
  \macc@set@skewchar\relax
  \let\mathaccentV\macc@nested@a
  \if#41
    \macc@nested@a\relax111{#2}%
  \else
    \def\gobble@till@marker##1\endmarker{}%
    \futurelet\first@char\gobble@till@marker#2\endmarker
    \ifcat\noexpand\first@char A\else
      \def\first@char{}%
    \fi
    \macc@nested@a\relax111{\first@char}%
  \fi
  \endgroup
}
\newtheorem{assumption}{Assumption}
\DeclareMathOperator{\BetaDist}{\mathsf{Beta}}
\DeclareMathOperator{\GammaDist}{\mathsf{G}}
\DeclareMathOperator{\InvGammaDist}{\mathsf{IG}}
\DeclareMathOperator{\Leb}{\lambda_{\mathsf{Leb}}}
\DeclareMathOperator*{\diam}{diam}
\DeclareMathOperator{\rvol}{\varrho}
\def\ups{\upsilon}
\newcommand{\bigO}{O}
\newcommand{\bigOtilde}{\tilde{\bigO}}
\newcommand{\Bb}{\mathbb{B}}
\newcommand{\Cab}{C}
\def\CIS{\mathop{\rm CIS}\nolimits}
\renewcommand{\Pr}{\mathsf{P}}
\renewcommand{\E}{\mathbb{E}}
\newcommand{\etah}{\hat{\eta}}
\newcommand{\tnn}{T_{\mathrm{NN}}}
\newcommand{\aH}{\a_{\mathrm{H}}}
\newcommand{\gbayes}{g^*}
\newcommand{\openball}{\Bb^o}
\newcommand{\closedball}{\Bb}
\newcommand{\ball}{\mathcal{B}}
\newcommand{\binomrv}{\mathsf{B}}
\newcommand{\dataset}{\mathscr{D}}
\newcommand{\datasetX}{\Xb}
\newcommand{\partitions}{\mathscr{P}}
\newcommand{\bad}{\mathsf{bad}}
\newcommand{\taubar}{\bar{\tau}}
\newcommand{\qbar}{\bar{q}}
\newcommand{\meanstd}[2]{#1 {\scriptsize$\pm #2$}}
\newcommand{\nnrule}[2]{{#2}_{#1}}
\newcommand{\splitrule}[3]{#3_{#1,#2}}
\newcommand{\selectrule}[4]{{#4}_{#1,#2,#3}}
\newcommand{\nnreg}[1]{\nnrule{#1}{\eta}}
\newcommand{\splitreg}[2]{\splitrule{#1}{#2}{\eta}}
\newcommand{\splitregexp}[2]{\splitrule{#1}{#2}{\overline{\eta}}}
\newcommand{\selectreg}[3]{\selectrule{#1}{#2}{#3}{\eta}}
\newcommand{\selectregexp}[3]
{\selectrule{#1}{#2}{#3}{\overline{\eta}}}
\newcommand{\nncls}[1]{\nnrule{#1}{g}}
\newcommand{\splitcls}[2]{\splitrule{#1}{#2}{g}}
\newcommand{\selectcls}[3]{\selectrule{#1}{#2}{#3}{g}}
\newcommand{\nndensity}[1]{\nnrule{#1}{p}}
\newcommand{\splitdensity}[3]{\splitrule{#1}{#2}{p^{#3}}}
\newcommand{\nndensitytr}[2]{\nnrule{#1}{\pt^{#2}}}
\newcommand{\splitdensitytr}[3]{\splitrule{#1}{#2}{\pt^{#3}}}
\newcommand{\nnftr}[1]{\nnrule{#1}{\widetilde{(f\circ p)}}}
\newcommand{\splitftr}[2]{\splitrule{#1}{#2}{\widetilde{(f\circ p)}}}
\newcommand{\HM}{\mathsf{HM}}
\newcommand{\AM}{\mathsf{AM}}
\newcommand{\GM}{\mathsf{GM}}
\renewcommand{\digamma}{\Psi}
\newcommand{\density}{\rho}
\newcommand{\qed}{\hfill\BlackBox}
\newcommand\StartAppendixEntries{}
  \renewcommand\StartAppendixEntries{\value{tocdepth}=-10000\relax}%
  \edef\maintocdepth{\the\value{tocdepth}}%
  \renewcommand\StartAppendixEntries{\value{tocdepth}=\maintocdepth\relax}%
\begin{document}

\title{Minimax Optimal Algorithms\\ 
with Fixed-$k$-Nearest Neighbors}

\author{\name J. Jon Ryu \email jongha@mit.edu \\
       \addr Department of Electrical Engineering and Computer Science\\
       Massachussetts Institute of Technology\\
       Cambridge, MA 02139, USA
       \AND
       \name Young-Han Kim \email yhk@ucsd.edu \\
       \addr Department of Electrical and Computer Engineering\\
       University of California San Diego\\ La Jolla, CA 92093, USA}

\editor{}

\maketitle

\begin{abstract}%
This paper presents how to perform minimax optimal classification, regression, and density estimation based on fixed-$k$ nearest neighbor (NN) searches. 
We consider a distributed learning scenario, in which a massive dataset is split into smaller groups, where the $k$-NNs are found for a query point with respect to each subset of data. 
We propose \emph{optimal} rules to aggregate the fixed-$k$-NN information for classification, regression, and density estimation that achieve minimax optimal rates for the respective problems.
We show that the distributed algorithm with a fixed $k$ over a sufficiently large number of groups attains a minimax optimal error rate up to a multiplicative logarithmic factor under some regularity conditions. 
Roughly speaking, distributed $k$-NN rules with $M$ groups has a performance comparable to the standard $\Theta(kM)$-NN rules even for fixed $k$.

\end{abstract}

\begin{keywords}
nearest neighbors, classification, regression, density estimation, distributed learning.
\end{keywords}

\section{Introduction}
Arguably one of the most primitive yet powerful nonparametric approaches for various statistical problems, the $k$-nearest-neighbor ($k$-NN) algorithms have been an essential toolkit in data science since their inception.
These algorithms have been extensively studied and analyzed over several decades for canonical statistical procedures including classification~\citep{Fix--Hodges1951,Cover--Hart1967}, regression~\citep{Cover1968a,Cover1968b}, density estimation~\citep{Loftsgaarden--Quesenberry1965,Fukunaga--Hostetler1973,Mack--Rosenblatt1979}, and density functional estimation~\citep{Kozachenko--Leonenko1987,Leonenko--Pronzato--Savani2008,Ryu--Ganguly--Kim--Noh--Lee2022}.
These algorithms remain attractive even in this modern age due to their effective performance despite their simplicity, as well as the rich understanding of their statistical properties.

There exist, however, clear limitations that hinder the wider deployment of these algorithms in practice.
First and most importantly, standard $k$-NN algorithms are often considered inherently infeasible for large-scale data, as they require storing and processing the entire data set on a single machine for nearest neighbor (NN) search. Second, although the number of neighbors $k$ needs to grow to infinity with the sample size to achieve statistical consistency in general for such procedures~\citep{Biau--Devroye2015}, small values of $k$ are highly preferred in practice to avoid the potentially demanding time complexity of large-$k$-NN search; see Section~\ref{sec:computation} for an in-depth discussion.

Recently, specifically for regression and classification, a few ensemble based methods~\citep{Xue--Kpotufe2018,Qiao--Duan--Cheng2019,Duan--Qiao--Cheng2020} have been proposed to achieve the accuracy of the optimal standard $k$-NN regression and classification rules with less computational complexity; however, theoretical guarantees of those solutions require large-$k$-NN searches.
\citet{Xue--Kpotufe2018} proposed an idea dubbed as \emph{denoising}, which is to draw multiple subsamples and preprocess them with the standard large-$k$-NN rule \emph{over the entire data} in the training phase, so that the $k$-NN information can be hashed effectively by 1-NN searches in the testing phase.
Though the resulting algorithm is provably optimal with a small statistical overhead, the denoising step may still suffer prohibitively large complexity for large $N$ and/or large $k$ in principle.
Recently, to address the computational and storage complexity of the standard $k$-NN classifier with large $N$, \citet{Qiao--Duan--Cheng2019} proposed the \emph{bigNN classifier}, which splits data into subsets, applies the standard $k$-NN classifier to each, and aggregates the labels by a majority vote.
This ensemble method works without any coordination among data splits, and thus they naturally fit to large-scale data which may be inherently stored and processed in distributed machines.
However, they showed its minimax optimality only when both the number of splits $M$ and the base $k$ increase as the sample size $N$ increases but only a strictly suboptimal guarantee for fixed $k$'s; see Section~\ref{sec:bignn} for the details.
With the increasingly-large-$k$ requirement from their theory for the optimal performance, they suggested to use the bigNN classifier in the preprocessing phase of the denoising proposal of \citep{Xue--Kpotufe2018}.
A more recent work~\citep{Duan--Qiao--Cheng2020} on a optimally weighted version of the bigNN classifier still assumes $k$ to grow.

In this paper, we complete the missing theory for small, fixed $k$ and show that the bigNN classifier with $k=1$ suffices for minimax rate-optimal classification.
More generally, we analyze a variant of the bigNN classifier, called the \emph{$M$-split $k$-NN classifier} or \emph{$(k,M)$-NN classifier} in short, which is defined as the majority vote over the total $kM$ nearest-neighbor labels obtained after running $k$-NN search over the $M$ data splits.
In general terms, we show that the $(k,M)$-NN classification rule behaves almost equivalently to the standard $\Th(M)$-NN rules, for any fixed $k\ge 1$. 
In particular, the $(1,M)$-NN rule, which is equivalent to the bigNN classifier with $k=1$, is shown to attain the minimax optimal rate up to logarithmic factors under smooth measure conditions.
We also provide a minimax-rate-optimal guarantee for regression task with an analogously defined $(k,M)$-NN regression rule.

The key technique in our analysis is to analyze intermediate rules that selectively aggregates the $k$-NN labels from each data split based on the $k$-th-NN distances from a query point. 
The intuition is that these intermediate rules which average only neighbors close enough to a query point exactly behave like a standard $\Th(M)$-NN rule for any fixed $k$. 
We establish the performance of the $(k,M)$-NN rules by showing that its performance is approximated by the intermediate rules, with a small (logarithmic) approximation overhead in the convergence rate.
Indeed, these intermediate rules, which we call the \emph{distance-selective} rules, attain exact minimax optimal rates for respective problems at the cost of additional complexity for ordering the NN distances; see Section~\ref{sec:selective}.

To provide a complete picture on the theory of distributed fixed-$k$-NNs, we also propose and analyze optimal rules for density estimation. 
We note that, unlike the two supervised learning problems above, density estimation has not been studied in the distributed learning setup.
While the $k$-NN density estimator is designed based on a different statistical property of NNs, 
it is known that $k$ needs to grow to infinity as the data size grows similar to classification and regression, for the estimator to become asymptotically consistent~\citep{Dasgupta--Kpotufe2014}.
Due to its distinct unsupervised nature, however, we need a different approach to combine the $k$-NN statistics.
The key property we utilize is that the volume of the fixed-$k$-NN ball scaled by the sample size converges to a Gamma random variable in distribution in the population limit (Proposition~\ref{prop:knndist}). 
Based on this asymptotic behavior, we design various aggregation rules that lead to asymptotically unbiased density estimators, and establish their convergence rates.

The algorithms proposed and analyzed in this paper are simple in nature, but we believe their implications may be valuable for practitioners. Specifically, while the $(\text{fixed } k, \text{growing } M)$-NN rules run faster than the standard 1-NN rules by processing smaller datasets with small-$k$-NN searches performed in parallel, they can achieve the same statistical guarantees as the \emph{optimal} standard (growing $k$)-NN rules run over the entire dataset. Moreover, when deploying these rules in practice, our analyses suggest that tuning only the number of splits $M$ (while fixing $k$, such as $k=1$) is sufficient, rather than tuning both parameters over a grid. 
From an algorithmic perspective, this implies that optimizing the performance of the 1-NN search algorithm is sufficient, without concern for the loss of statistical power.
We experimentally demonstrate that the $(1, M)$-NN rules indeed perform on par with the optimal standard $M$-NN rules as expected by theory, while running faster than the standard 1-NN rules.

\paragraph*{Organization}
The rest of the paper is organized as follows.
In Section~\ref{sec:overview}, we motivate the high-level ideas for the proposed rules and discuss the computational benefit of the data-splitting rules. 
We first present the main results for regression and classification in Section~\ref{sec:regression_classification} and then study density estimation in Section~\ref{sec:density}.
We then discuss related work in Section~\ref{sec:related}.
All the proof are deferred to Appendix.

\section{Overview: Learning with Distributed, Fixed-\texorpdfstring{$k$}{k}-Nearest-Neighbors}
\label{sec:overview}
Before we delve into the formal discussion to be followed, here we provide intuitions for the limitations of the standard fixed-$k$-NN rules and motivate how we can overcome these issues in the distributed learning setup.
We then discuss the computational benefit of the distributed learning.

\subsection{High-Level Intuitions for the \texorpdfstring{$M$}{M}-Split \texorpdfstring{$k$}{k}-NN Rules}
We will first consider the supervised learning problems of classification and regression, and the unsupervised problem of density estimation next.
In both cases, our intuitive arguments will be grounded in the consideration of the population limit.

\subsubsection{Classification and Regression}
\label{sec:intuition_classification}
Consider a binary classification problem.
Given \iid samples $\dataset=\{(X_i,Y_i)\}_{i=1}^N$ drawn from a distribution $\P$ over $\Xc\times \Yc$, where $\Yc=\{0,1\}$, the (standard) $k$-NN classifier, denoted as $\nncls{k}(x;\dataset)$, returns the majority vote of the labels of the $k$-NN instances from $\dataset$ to the query point $x$.
\citet{Cover--Hart1967} showed that the simplest 1-NN rule asymptotically achieves at most twice of the Bayes optimal error:
\begin{theorem}[\citealp{Cover--Hart1967}]
\label{thm:cover_hart}
For a metric $\rho$ defined on $\Xc$,
if $(\Xc,\rho)$ is a separable metric space, we have
\begin{align*}
\lim_{N\to\infty}\E_{(X,Y)\sim\P}[\Prob(\nncls{1}(X;\dataset)\neq Y|\dataset)]
&\le 2 \Prob(\gbayes(X)\neq Y),
\end{align*}
where $\gbayes(x)\defeq  \ones\{\eta(x)\ge 1/2\}$ denotes the Bayes optimal classifier for $\eta(x)$ denoting the conditional probability of the label $y$ being 1 given $X=x$.
\end{theorem}
The following lemma is the crucial observation to prove this theorem.
\begin{lemma}[\citealp{Cover--Hart1967}]
\label{lem:cover_hart_convergence}
Let $X_{(1)}(x)$ be the nearest neighbor of $x$ from independent and identically distributed (\iid) samples $\{X_1,\ldots,X_N\}$. If $(\Xc,\rho)$ is a separable metric space,
\[
\lim_{N\to\infty} \rho(X_{(1)}(x), x)=0 ~\text{with probability 1}.
\]    
\end{lemma}
With this lemma, the consequence is immediate: in the population limit, the 1-NN label for $x$ from the data set is essentially a random label drawn from the underlying distribution $p(y|x)$. 
Noting that the random guess incurs an error at most twice the Bayes error concludes the proof.

For $k\ge 1$, a similar convergence as in Lemma~\ref{lem:cover_hart_convergence} can be argued for the $k$-NN with any fixed $k$. This readily leads to an explicit expression of the asymptotic error probability of the $k$-NN rule, which is exactly the error probability with the majority voting rule based on $k$ random coin flips from the same label distribution~\citep[Section 5.4]{Devroye--Gyorfi--Lugosi1996}.
As one can expect the majority voting over \emph{random guesses} to converge to the Bayes rule, it can be shown that that with $k=k_N\to\infty$ and $k_N/N\to 0$ as $N\to\infty$, the $k$-NN classification rule is asymptotically consistent. 
We remark in passing that an exponential convergence of the majority voting rule with multiple random guesses to the Bayes rule was established by \citet{Bhatt--Huang--Kim--Ryu--Sen2018b}, extending the analysis of Theorem~\ref{thm:cover_hart}; see Variation 4 therein.

This asymptotic argument explains why the standard $k$-NN classifier fails with a fixed $k$ and converges to the Bayes optimal rule as $k$ grows, \ie with infinitely many random guesses, the majority voting rule converges to the Bayes optimal rule. 
In the distributed learning setup, this suggests a natural algorithm: if we are given a set of $k$-NN labels from $M$ different data splits, regardless of the size of $k$, the majority voting over the entire $kM$ labels is expected to converge to the Bayes classifier as long as the number of random guesses $kM$ grows appropriately. This is precisely the $(k,M)$-NN classifier we propose and analyze in this paper; we formally justify this intuition in our analyses. We also examine an analogous $(k,M)$-NN regression rule, which returns the mean of the $kM$ noisy labels.

\subsubsection{Density Estimation}
\label{sec:intuition_density}
For an integer $k\ge 2$ and $x\in\Xc=\Real^d$, \citet{Loftsgaarden--Quesenberry1965} proposed the $k$-NN density estimate at $x$ with respect to the sample $\Xb=X_{1:N}$ of size $N$ as
\begin{align}
\nndensity{k}(x;\Xb)
\defeq 
\frac{k-1}{U_k(x;\Xb)},
\label{eq:base_knn_density_estimate}
\end{align}
where we define $U_k(x;\Xb)\defeq N\Leb(\openball(x,r_k(x;\Xb)))$, which is a \emph{normalized} Lebesgue measure $\Leb$ over $\Real^d$ of the $k$-NN ball centered at $x$ with respect to sample $\Xb$. Here, $r_k(x;\Xb)$ denotes the distance from $x$ to its $k$-th NN in $\Xb$ and $\openball(x,r)\defeq \{y\in\Real^d\suchthat \|x-y\|_2< r\}$ denotes the open ball of radius $r>0$ centered at $x\in\Real^d$.

\citet{Loftsgaarden--Quesenberry1965} showed its weak consistency given that $k$ grows to infinity sublinearly with respect to the sample size.
\begin{restatable}[{\citealp{Loftsgaarden--Quesenberry1965}}]{theorem}{ThmKnnDensityEstimateWeakConsistency}
\label{thm:knn_density_estimate_weak_consistent}
Suppose that $p(x)$ is continuous and positive at $x$.
If $k=k_N$ satisfies that $k_N\to\infty$ as $N\to\infty$ with $k_N/N\to 0$, then $\nndensity{k}(x)$ converges to $p(x)$ in probability, denoted as $\nndensity{k}(x)\to_p p(x)$, as $n\to \infty$.
\end{restatable}
Due to this increasing-$k$ requirement for consistency, the $k$-NN density estimator is often defined as $\frac{k}{U_k(x;\Xb)}$ instead of $\frac{k-1}{U_k(x;\Xb)}$.
As will become clear below, however, for a fixed-$k$ case, the factor of $(k-1)$ is the right choice that leads to an \emph{unbiased} estimator.

To build an intuition for why the $k$-NN estimator is inconsistent for fixed $k$'s and to design an optimal rule with fixed $k$-NN statistics, we recall a fundamental and very useful property of the fixed $k$-NN: in words, a properly normalized volume of the $k$-NN ball converges to a Gamma random variable in distribution, whose shape parameter is $k$ and rate parameter is the density at the query point.
Formally, let $\GammaDist(\a,\b)$ denote a Gamma distribution with shape parameter $\a>0$ and rate parameter $\b>0$.
The following property has played a pivotal role in designing $L_2$-consistent estimators for density functionals using fixed $k$-NNs~\citep{Leonenko--Pronzato--Savani2008,Ryu--Ganguly--Kim--Noh--Lee2022}.
\begin{proposition}%
\label{prop:knndist}
Suppose that $k\ge 1$ is a fixed integer, and let $\Xb=X_{1:N}$ be \iid samples drawn from $p$ on $\Real^d$.
Then, for almost every $x$,
$U_{k}(x;\Xb)$ converges to a random variable $U_{k\infty}(x)\sim \GammaDist(k,p(x))$ in distribution as $N\to\infty$.
\end{proposition}

This asymptotic behavior can explain why the fixed-$k$-NN density is inherently inconsistent as follows.
Let $\InvGammaDist(\a,\b)$ denote a Gamma distribution with shape parameter $\a>0$ and scale parameter $\b>0$.
It is known that the reciprocal $1/U$ of a Gamma random variable $U\sim \GammaDist(\a,\b)$ follows the inverse Gamma distribution $ \InvGammaDist(\a,\b)$.
Hence, by the continuous mapping theorem and Proposition~\ref{prop:knndist}, the standard $k$-NN density estimate $\nndensity{k}(x;\Xb)$ converges to a random variable $\frac{k-1}{U_{k\infty}(x)}\sim \InvGammaDist(k,p(x))$ as $n\to\infty$.
Since the inverse Gamma distribution $\InvGammaDist(\a,\b)$ has mean $\frac{\b}{\a-1}$ if $\a>1$ and variance $\frac{\b}{(\a-1)^2(\a-2)}$ if $\a>2$, we expect to have that
\begin{align}
\label{eq:heuristic_bias}
\lim_{n\to\infty}\E[\nndensity{k}(x;X_{1:n})]
&= \E\Bigl[\frac{k-1}{U_k^\infty(x)}\Bigr]
= p(x),\quad\text{and}\\
\lim_{n\to\infty} \Var(\nndensity{k}(x;X_{1:n}))
&= \Var\Bigl(\frac{k-1}{U_k^\infty(x)}\Bigr)
= \frac{p(x)^2}{k-2}
\label{eq:heuristic_variance}
\end{align}
for $k\ge 3$.
This shows that while the $k$-NN density estimate is asymptotically unbiased for any $k\ge 2$ with the $(k-1)$ factor, the variance of $\nndensity{k}(x;\Xb)$ to vanishes if and only if $k$ grows to infinity.

Based on this observation, in Section~\ref{sec:density}, we construct a $M$-split fixed-$k$-NN density estimator by simply taking an arithmetic average of the $k$-NN density estimators over the data splits.
This allows the estimator to remain asymptotically unbiased, while at the same time its variance diminishes as the number of splits $M$ grows as $O(M^{-1})$, even when $k\ge 3$ is fixed.
We will show that this simple aggregation rule is nearly minimax optimal.
We will also discuss a class of its variants which can be constructed based on the asymptotic behavior in Proposition~\ref{prop:knndist} which could also work for any fixed $k\ge 1$, and establish their convergence rates.

\subsection{Reduced Computational Complexity with Distributed Learning}
\label{sec:computation}
As alluded to above, the standard $k$-NN rules are known to be asymptotically consistent only if $k\to\infty$ as $N\to\infty$.
Specifically to attain minimax rate-optimality,  $k=\Th(N^{\frac{2\aH}{2\aH+d}})$ is required under measures are H\"older continuity of order $\aH$; see Theorems~\ref{thm:regression}, \ref{thm:classification_rate_margin}, and \ref{thm:agg_density_estimate_l2_rate}, and their following discussions.
As alluded to earlier, this large-$k$ requirement on the standard $k$-NN rules for statistical optimality may be problematic in practice.
The main claim of this paper is that the $M$-split $1$-NN rules replace the large-$k$ requirement of the standard $k$-NN rules with a large-$M$ requirement without almost no loss in the statistical performance, while providing a natural, distributed solution to large-scale data with a possible speed-up via parallel computation.

To examine the complexity more carefully, consider Euclidean space $\Real^d$ for a moment. 
Let $\tnn(k,N)$ denote the test-time complexity of a $k$-NN search algorithm for data of size $N$. The simplest baseline NN search algorithm is the brute-force search,
which has time complexity $\tnn(k,N)=O(Nd)$ regardless of $k$.\footnote{Given a query point, 
(1) compute the distances from the data set to the query ($\bigO(Nd)$); (2)
find the $k$-NN distance using introselect algorithm ($\bigO(N)$),
(3) pick the $k$-nearest neighbors; ($\bigO(N)$).%
}
For extremely large-scale data, however, even $\bigO(N)$ may be unwieldy in practice. 
To reduce the complexity, several alternative data structures specialized for NN search such as KD-Trees~\citep{Bentley1975} for Euclidean data, and Metric Trees~\citep{Uhlmann1991} and Cover Trees~\citep{Beygelzimer--Kakade--Langford2006} for non-Euclidean data have been developed; see \citep{Dasgupta--Kpotufe2019,Kibriya--Frank2007} for an overview and comparison of empirical performance of these specialized data structures for $k$-NN search.
These are preferred over the brute-force search for better test time complexity $\bigO(\log N)$ in a moderate size of dimension, say $d\le 10$, but for much higher-dimensional data, it is known that the brute-force search may be faster.
In particular, the most popular choice of a KD-Tree based search algorithm has time complexity $\tnn(1,N)=O(2^d\log N)$ for $k=1$. The time complexity of exact $k$-NN search is $\tnn(k,N)=\bigO(k)\tnn(1,N)$ for moderately small $k$, but for a large $k$ the time complexity could be worse than $\bigO(k)\tnn(1,N)$.\footnote{One possible implementation of exact $k$-NN search algorithm with KD-tree is to remove already found points and repeatedly find 1-NN points until $k$-NN points are found using KD-tree-based 1-NN search; after the search, the removed points may be reinserted into the KD-tree without affecting the overall complexity for a moderate size of $k$.} 

Thanks to the fully distributed nature, the $(k,M)$-NN classifier have computational advantage over the standard $\Th(kM)$-NN classifier of nearly same statistical power run over the entire data.
Suppose that we split data into $M$ groups of equal size $\lceil \frac{N}{M}\rceil$ and they can be processed by $S$ parallel processors, where each processor ideally manages $\lceil \frac{M}{S}\rceil$ data splits.
Given the time complexity $\tnn(k,N)$ of a base $k$-NN search algorithm, the $(k,M)$-NN algorithms have time complexity
\[
T_{M;S}(k,N) = \Bigl\lceil\frac{M}{S}\Bigr\rceil \tnn\Bigl(k,\Bigl\lceil\frac{N}{M}\Bigr\rceil\Bigr).
\]
As to be discussed in Sections~\ref{sec:regression_classification} and \ref{sec:density}, the $(k,M)$-NN rules with $S\le M$ parallel units may attain the performance of the standard $\Th(kM)$-NN rules in a single machine with the relative speedup of 
\[
\frac{T_{M;S}(k,N)}{\tnn(kM,N)}\sim \frac{1}{S}
\]
with a brute-force search, and
\[
\frac{T_{M;S}(k,N)}{\tnn(kM,N)}\sim\frac{\frac{kM}{S}\log\frac{N}{M}}{kM\log N} = \frac{1}{S}\Bigl(1-\frac{\log M}{\log N}\Bigr)
\]
with a KD-Tree based search algorithm assuming $\tnn(k,N)=\bigO(k\log N)$ for simplicity.
Hence, the most benefit of the proposed algorithms comes from their distributed nature which reduces both time and storage complexity.

\section{Regression and Classification}
\label{sec:regression_classification}
Let $(\Xc,\rho)$ be a metric space and let $\Yc$ be the outcome (or label) space, \ie $\Yc\subseteq\Real$ for regression and $\Yc=\{0,1\}$ for binary classification.
We denote by $\P$ a joint distribution over $\Xc\times \Yc$, by $\mu$ the marginal distribution on $\Xc$, and by $\eta$ the regression function $\eta(x)=\E[Y|X=x]$.

We denote an open ball of radius $r$ centered at $x\in\Xc$ by $\openball(x,r)\defeq \{x'\in\Xc\suchthat \rho(x,x')< r\}$ and the closed ball by $\closedball(x,r)\defeq \overline{\openball(x,r)}$.
The support of a measure $\mu$ is denoted as
$\supp(\mu)\defeq 
\{x\in\Xc\suchthat \mu(\openball(x,r))>0,~\forall r>0\}$.

Given sample $\dataset=(\Xb,\Yb)=\{(X_i,Y_i)\}_{i=1}^N$ and a point $x\in\Real^d$, we use $X_{(k)}(x;\Xb)$ to denote the $k$-th-nearest neighbor of $x$ from the sample instances $\Xb=X_{1:N}$
and use $Y_{(k)}(x;\dataset)$ to denote the corresponding $k$-th-NN label among $\Yb=Y_{1:N}$; any tie is broken arbitrarily.
The $k$-th-NN distance of $x$ is denoted as $r_k(x;\Xb)\defeq \rho(x,X_{(k)}(x;\Xb))$ for $k\le N$.
We will omit the underlying data $\dataset$ or $\Xb$ whenever it is clear from the context.

For the rest of the paper, we use $N$, $M$, and $n=N/M$ to denote the size of the entire data $\dataset$, the number of data splits, and the size of each data split, respectively, assuming that $M$ divides $N$ for simplicity.

\subsection{Regression}

Given paired data $\dataset=\{(X_i,Y_i)\}_{i=1}^N$ drawn independently from the underlying joint distribution $\P$ over $\Xc\times\Yc$, the goal of regression is to design an estimator $\etah=\etah(\cdot;\dataset)\suchthat \Xc\to\Yc$ based on the data such that the estimate $\etah(x)$ is \emph{close} to the conditional expectation $\eta(x)=\E[Y|X=x]$, where the closeness between $\eta$ and $\etah$ is typically measured by the $l_p$-norm under $\mu$,
$\|\etah-\eta\|_p\defeq (\int |\etah(x)-\eta(x)|^p\mu(\diff x))^{1/p}$
for $p=1,2,$ or the sup norm 
$\|\etah-\eta\|_\infty \defeq \sup_{x\in\Xc} |\etah(x)-\eta(x)|$.

\subsubsection{\texorpdfstring{$M$}{M}-Split \texorpdfstring{$k$}{k}-NN Regression Rule}
Given a query $x\in\Xc$, we first recall that the \emph{standard $k$-NN regression rule} outputs the average of the $k$-NN labels, \ie
\[
\nnreg{k}(x;\dataset)\defeq \frac{1}{k} \sum_{i=1}^k Y_{(i)}(x;\dataset).
\]
Instead of running $k$-NN search over the entire data, given the number of splits $M\ge 1$, we first split the data $\dataset$ of size $N$ into $M$ subsets of equal size at random. 
Let $\partitions=\{\Dc_1,\ldots,\Dc_M\}$ denote the random subsets, where $\Dc_m$ corresponds to the $m$-th split. %
After finding $k$-NN labels for each data split, 
the \emph{$M$-split $k$-NN} (or \emph{$(k,M)$-NN} in short) {regression rule} is defined as the average of all the $kM$ labels, \ie 
\begin{align*}
\splitreg{k}{M}(x)
\defeq\splitreg{k}{M}(x;\partitions)
\defeq \frac{1}{M}\sum_{m=1}^M \nnreg{k}(x;\Dc_{m}).
\end{align*}

\subsubsection{Performance Guarantees}
We claim that the proposed $(k,M)$-NN regression rule for any fixed $k\ge 1$ is nearly optimal in terms of error rate under a set of standard regularity conditions.
For a formal statement, we borrow some standard assumptions on the metric measure space in the literature on analyzing the $k$-NN algorithms~\citep{Dasgupta--Kpotufe2019}.
\begin{assumption}[Doubling and homogeneous measure]\label{assum:doubling_homogeneous}
The measure $\mu$ on metric space $(\Xc,\rho)$ is \emph{doubling with exponent $d$}, \ie for any $x\in\supp(\mu)$ and $r>0$,
\[
\mu(\openball(x,r))\le 2^d\mu\Bigl(\openball\bigl(x,\frac{r}{2}\bigr)\Bigr).
\]
The measure $\mu$ is \emph{$(C_d,d)$-homogeneous}, \ie for some $C_d>0$ for any $x\in\supp(\mu)$ and $r>0$,
\[
\mu(\openball(x,r))\ge \min\{C_dr^d, 1\}.
\]
\end{assumption}
Note that a measure $\mu$ is homogeneous if $\mu$ is doubling and $\supp(\mu)$ is bounded.
The doubling exponent $d$ can be interpreted as an intrinsic dimension of a measure space. 

\begin{assumption}[H\"older continuity]
\label{assum:holder}
The conditional expectation function $\eta(x)=\E[Y|X=x]$ is \emph{$(\aH,A)$-H\"older continuous} for some $0<\aH\le 1$ and $A>0$ in metric space $(\Xc,\rho)$, \ie
for any $x,x'\in \Xc$,
\[
|\eta(x)-\eta(x')|\le A\rho^{\aH}(x,x').
\]
\end{assumption}

\begin{assumption}[Bounded conditional expectation and variance]
\label{assum:boundedness}
The conditional expectation function $\eta(x)=\E[Y|X=x]$ and the conditional variance function $v(x)\defeq \Var(Y|X=x)$ are bounded, \ie 
\[
\sup_{x\in\Xc}|\eta(x)|<\infty \text{~~and~~}\sup_{x\in \Xc} v(x)<\infty.
\]
\end{assumption}

The following condition is borrowed from \citep{Xue--Kpotufe2018} to establish a high-probability bound.
\begin{assumption}
\label{assum:stronger}
The collection of closed balls in $\Xc$ has finite VC dimension $\Vc$ and the outcome space $\Yc\subset\Real$ is contained in a bounded interval of length $l_Y$. 
\end{assumption}

The main goal of this paper is to demonstrate that the distributed $(k,M)$-NN rules can attain almost statistically equivalent performance to the optimal $k$-NN rules. 
Hence, {our statements in what follows are written in parallel to the known results for the standard $k$-NN rules, to which we remark the pointers for the interested reader.} 
For example, the following statement is new and we refer to \citep[Theorem~1.3]{Dasgupta--Kpotufe2019} for an analogous statement for the standard $k$-NN regression algorithm.

\begin{restatable}[Regression]{theorem}{ThmRegression}
\label{thm:regression}
Suppose that Assumptions~\ref{assum:doubling_homogeneous} and \ref{assum:holder} hold.
Let $k\ge 1$ be fixed.
\begin{enumerate}[label=(\alph*)]
\item If Assumption~\ref{assum:boundedness} holds and the support of $\mu$ is bounded, for any $M\le N$ such that $N/M\ge k$ and for any $\eps>0$, we have
\begin{align*}
\E_{\partitions}\|\splitreg{k}{M}-\eta\|_2
&\le C_1\Bigl\{\Bigl(\frac{M\log M}{N}\Bigr)^{\frac{\aH}{d}} + \sqrt{\frac{(\log M)^{1+\eps}}{M}}\Bigr\}.
\end{align*}
\item If Assumption~\ref{assum:stronger} holds, for any $0<\d<1$ and $0<\kappa<1$,
if $M\ge \frac{2}{\kappa^2(1-\kappa)}\log\frac{1}{\d}$, then with probability at least $1-\d$ over $\partitions$, we have
\begin{align*}
\|\splitreg{k}{M}-\eta\|_{\infty}
&\le C_2\Bigl\{ \Bigl(\frac{M}{N}\Bigl(2\Vc\log \frac{N}{M} + \log \frac{(1-\kappa)M}{\log\frac{1}{\d}}\Bigr)\Bigr)^{\frac{\aH}{d}} 
+\sqrt{\frac{1}{(1-\kappa)M}\log\frac{N}{\d}} 
\Bigr\}.
\end{align*}
\end{enumerate}
In particular, $C_1$ and $C_2$ are constants and independent of the ambient dimension $D$.
\end{restatable}

In particular, if we set $M={\Theta}(N^{\frac{2\aH}{2\aH+d}})$, Theorem~\ref{thm:regression} gives
\begin{align*}
\E_{\partitions}\|\splitreg{k}{M}-\eta\|_2&=\bigO(N^{-\frac{\aH}{2\aH+d}}(\log N)^{\half(1+\eps)})\text{ and}\\
\|\splitreg{k}{M}-\eta\|_\infty&=\bigO\Bigl(N^{-\frac{\aH}{2\aH+d}}\bigl(\log\frac{N}{\d}\bigr)^{\half}\Bigr)\text{ with probability $\ge 1-\d$}.
\end{align*}
This rate is known to be minimax optimal (modulo the polylogarithmic multiplicative terms) under the H\"older continuity of order $\aH$; for the standard $k$-NN regression algorithm, this rate optimality is attained for $k=\Th(N^{\frac{2\aH}{2\aH+d}})$~\citep{Dasgupta--Kpotufe2019,Xue--Kpotufe2018}. 
In this view, the $(k,M)$-NN regression algorithm attains the performance of the standard $\Th(M)$-NN regression algorithm for any fixed $k$.

\subsection{Classification}
We consider the binary classification with $\Yc=\{0,1\}$.
Given paired data $\dataset=\{(X_i,Y_i)\}_{i=1}^N$ drawn independently from $\P$, the goal of binary classification is to design a (data-dependent) classifier $\gh(\cdot;\dataset)\suchthat \Xc\to \Yc$ such that it minimizes the classification error $\Pr(\gh(X;\dataset)\neq Y)$.
For a classifier $\gh\suchthat\Xc\to\Yc$, we define its \emph{pointwise risk} at $x\in\Xc$ as $R(\gh;x)\defeq\Pr(Y\neq \gh(x)|X=x)$, and define the \emph{(expected) risk} as $R(\gh)\defeq \E[R(\gh;X)]=\Pr(Y\neq \gh(X))$.
Let $\gbayes(x)$ denote the \emph{Bayes-optimal} classifier, \ie $\gbayes(x)\defeq \ones\{\eta(x)\ge 1/2\}$ for all $x\in\Xc$, and let $R^*(x)\defeq R(\gbayes;x)=\eta(x)\wedge (1-\eta(x))$ and $R^*\defeq R(\gbayes)$ denote the \emph{pointwise-Bayes risk} and the \emph{(expected) Bayes risk}, respectively.
The canonical performance measure of a classifier $\gh$ is its \emph{excess risk} $R(\gh)-R^*$.

Another important performance criterion is the \emph{classification instability} proposed by \citet{Sun--Qiao--Cheng2016}, which quantifies the stablility of a classification procedure with respect to independent realizations of training data.
Given $N\ge 1$, with a slight abuse of notation, denote $\gh$ as a classification procedure $\dataset\mapsto \gh(\cdot;\dataset)$ that maps a data set $\dataset$ of size $N$ to a classifier $\gh(\cdot;\dataset)$. 
The classification instability of the classification procedure is defined as 
\[
\CIS_N(\gh)\defeq \E[\Pr(\gh(X;\dataset)\neq \gh(X;\dataset')|\dataset,\dataset')],
\]
where $\dataset$ and $\dataset'$ are independent, \iid data of size $N$.

\subsubsection{\texorpdfstring{$M$}{M}-Split \texorpdfstring{$k$}{k}-NN Classification Rule}
The \emph{standard $k$-NN classifier} is defined as the plug-in classifier of the standard $k$-NN regression estimate:
\[
\nncls{k}(x;\dataset)\defeq \ones\Bigl(\nnreg{k}(x;\dataset)\ge \frac{1}{2}\Bigr).
\]
It can be equivalently viewed as the majority vote over the $k$-NN labels given a query.
Similarly, we define the \emph{$(k,M)$-NN classification rule} as the plug-in classifier of the $(k,M)$-NN regression rule:
\[
\splitcls{k}{M}(x)
\defeq \splitcls{k}{M}(x;\partitions)
\defeq \ones\Bigl(\splitreg{k}{M}(x;\partitions)\ge \frac{1}{2}\Bigr).
\]

\subsubsection{Performance Guarantees}
As shown in the previous section for regression, we can show that the $(k,M)$-NN classifier behaves nearly identically to the standard $\Theta(M)$-NN rules for any fixed $k\ge 1$.
Here, we focus on guarantees on convergence rates of excess risk and classification instability.

To establish rates of convergence for classification,
we recall the following notion of smoothness for the conditional probability $\eta(x)=\P(Y=1|X=x)$ defined in \citep{Chaudhuri--Dasgupta2014} that takes into account the underlying measure $\mu$ to better capture the nature of classification than the standard H\"older continuity in Assumption~\ref{assum:holder}.
\begin{assumption}[Smoothness]
\label{assum:smooth}
For $\a\in(0,1]$ and $A>0$, $\eta(x)$ is \emph{$(\a,A)$-smooth in metric measure space $(\Xc,\rho,\mu)$}, \ie for all $x\in\supp(\mu)$ and $r>0$,
\[
\abs{\eta(\closedball(x,r))-\eta(x)}
\le A\mu^\a(\openball(x,r)).
\]
\end{assumption}

The following condition on the behavior of the measure $\mu$ around the decision boundary of $\eta$ is a standard assumption to establish a fast rate of convergence~\citep{Audibert--Tsybakov2007}.

\begin{assumption}[Margin condition]
\label{assum:margin}
For $\b\ge 0$, $\eta$ satisfies the \emph{$(\b,B)$-margin condition} in $(\Xc,\rho,\mu)$, \ie there exists a constant $B>0$ such that
\[
\mu(\partial\eta_\Delta)\le B\Delta^{\b},
\]
where
$\partial\eta_\Delta\defeq \{x\in\supp(\mu)\suchthat |\eta(x)-1/2|\le \Delta\}$
denotes the decision boundary with margin $\Delta\in(0,1/2]$.
\end{assumption}

The following statement can be understood as a distributed counterpart to {\citep[Theorem~4]{Chaudhuri--Dasgupta2014}}.
\begin{restatable}[Classification]{theorem}{ThmClassification}
\label{thm:classification_rate_margin}
Under Assumptions~\ref{assum:smooth} and \ref{assum:margin}, the following statements hold for any fixed $k\ge 1$, where $M_o$, $C_o$, $C_o'$, and $C_o''$ are constants depending on $\a,A,\b,B$, and $k$.
\begin{enumerate}[label=(\alph*)]
\item Pick any $\d\in(0,1)$ and $M_o>0$ such that $M=M_oN^{\frac{2\a}{2\a+1}}(\log\frac{1}{\d})^{\frac{1}{2\a+1}}\le N$. 
If $N\ge M\{2k + \log (\frac{15}{2^6}M\log\frac{2}{\d})\}$,
with probability at least $1-\d$ over $\partitions$,
\begin{align*}
&\Pr(\splitcls{k}{M}(x)\neq \gbayes(X)|\partitions)
\le \d + 
C_o
\Bigl(\frac{\log N}{N}\log\frac{2}{\d}\Bigr)^{\frac{\a\b}{2\a+1}}.
\end{align*}
\item Pick any $M_o\in (0,N^{\frac{1}{2\a+1}}]$ and set $M=M_oN^{\frac{2\a}{2\a+1}}\le N$. 
Then, for $N\ge M\{2k + \log (\frac{15}{2^6}M\log\frac{2}{\d})\}$, we have
\begin{align*}
\E_\partitions[R(\splitcls{k}{M})]-R^* &\le C_o'N^{-\frac{\a(\b+1)}{2\a+1}} (\log N)^{\half(1+\eps)(\b+1)}\quad\text{and}\quad\\
\CIS_N(\splitcls{k}{M})&\le C_o''
N^{-\frac{\a\b}{2\a+1}} (\log N)^{\half(1+\eps)\b},
\end{align*}
where $\eps>0$ is arbitrary.
\end{enumerate}
\end{restatable}

Suppose that $\eta$ is $(\aH,A)$-H\"older continuous and $\mu$ has a density with respect to Lebesgue measure that is strictly bounded away from zero on its support.
Then, by \citep[Lemma~2]{Chaudhuri--Dasgupta2014}, $\eta$ is $(\frac{\aH}{d},A)$-smooth.
Hence, if we set $M={\Theta}(N^{\frac{2\aH}{2\aH+d}})$ in Theorem~\ref{thm:classification_rate_margin}(b), we have
\begin{align*}
\E_\partitions[R(\splitcls{k}{M})]-R^* &= \bigO(N^{-\frac{\aH(\b+1)}{2\aH+d}} (\log N)^{\half(1+\eps)(\b+1)})\quad\text{and}\\
\CIS_N(\splitcls{k}{M})&=\bigO(N^{-\frac{\aH\b}{2\aH+d}}(\log N)^{\half(1+\eps)\b}),
\end{align*}
which are known to be minimax optimal (modulo the multiplicative polylogarithmic factors) under the H\"older continuity assumption~\citep{Chaudhuri--Dasgupta2014,Sun--Qiao--Cheng2016}. 
In parallel to Theorem~\ref{thm:regression} and the following discussion, the standard $k$-NN classifier is known to achieve these rates for $k=\Th(N^{\frac{2\aH}{2\aH+d}})$, and thus the $(k,M)$-NN classifier attains the performance of a standard $\Th(M)$-NN classifier in this sense.

\begin{remark}[Reduction to regression]
For a regression estimate $\etah$, let $\gh$ be the plug-in classifier with respect to $\etah$. 
Then, via the inequality
\[
R(\gh)-R^*\le 2\|\etah-\eta\|_1,
\]
the guarantees for the $(k,M)$-NN regression rule in Theorem~\ref{thm:regression} readily imply convergence rates of the excess riskeven for a multiclass classification scenario, by adapting the guarantee for a multivariate regression setting~\citep{Dasgupta--Kpotufe2019}. 
The current statements, however, are more general results for binary classification that apply to beyond smooth distributions, following the analysis by \citet{Chaudhuri--Dasgupta2014}.
\end{remark}

\subsection{Discussions}
\label{SEC:DISCUSSION}
In the previous section, we present the convergence rate guarantees for the $(k,M)$-NN classifier. In this section, we remark the implication of the results compared to \citep{Qiao--Duan--Cheng2019} in Section~\ref{sec:bignn}.
We then discuss a refined aggregation scheme based on the idea of distance-based selection, which are not only used as a proof technique for analyzing the $(k,M)$-NN rules, but also achieve minimax optimal rates without logarithmic factors on their own (Section~\ref{sec:selective}).

\subsubsection{Comparison to the BigNN Classifier}
\label{sec:bignn}
The \emph{bigNN classifier} 
proposed by \citet{Qiao--Duan--Cheng2019} takes the majority vote over the $M$ labels, each of which is the output of the standard $k$-NN classifier from a data split.
Formally, it is defined as $g_{\mathsf{big};M}^{(k)}(x;\partitions)\defeq\ones(\eta_{\mathsf{big};M}^{(k)}(x;\partitions)\ge 1/2)$, where
$\eta_{\mathsf{big};M}^{(k)}(x;\partitions)\defeq \frac{1}{M}\sum_{m=1}^M 
\nncls{k}(x;\Dc_m)
$.
\citet{Qiao--Duan--Cheng2019} showed that the bigNN classifier is minimax rate-optimal, provided that $k$ grows to infinity with a certain speed that depends on the smoothness of the underlying distribution.
\begin{theorem}[{\citealp[Theorems~1 and 2]{Qiao--Duan--Cheng2019}}, rephrased]
Assume Assumptions~\ref{assum:smooth} and \ref{assum:margin}.
Pick $1\le M\le N$ and
set $k=k_o N^{\frac{2\a}{2\a+1}}M^{-\frac{1}{2\a+1}}$ for some constant $k_o\ge 1$, such that $1\le k\le N$ and $k\to \infty$ as $N\to\infty$.
Then, we have
\begin{align*}
\E_\partitions[R(g_{\mathsf{big};M}^{(k)})]-R^* &=O(N^{-\frac{\a(\b+1)}{2\a+1}})
\quad\text{and}\quad
\CIS_N(g_{\mathsf{big};M}^{(k)})
=O(N^{-\frac{\a\b}{2\a+1}}).
\end{align*}
Further, if $k\ge 1$ is fixed, then for $M=N^{\gamma}$ with $\gamma\in(0,\frac{2\a}{2\a+1})$, we have
\begin{align*}
\E_\partitions[R(g_{\mathsf{big};M}^{(k)})]-R^* &=O(N^{-\frac{\gamma(\b+1)}{2}})
\quad\text{and}\quad
\CIS_N(g_{\mathsf{big};M}^{(k)})
=O(N^{-\frac{\gamma\b}{2}}).
\end{align*}
\end{theorem}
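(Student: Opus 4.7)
The plan is to reduce the analysis to a single scalar quantity: the pointwise probability $q_n(x)\defeq \Pr(\etah^{(k)}(x;\Dc_1)\ge 1/2)$ with $n=N/M$. Conditional on $x$, the $M$ indicators $\ones\{\etah^{(k)}(x;\Dc_m)\ge 1/2\}$, $m=1,\ldots,M$, are i.i.d.\ Bernoulli$(q_n(x))$ across splits, so $\gh_{\mathsf{big}}^{(k)}(x;\Pc)$ is the empirical-majority of these Bernoullis, and Hoeffding's inequality immediately yields
\[
\Pr\bigl(\gh_{\mathsf{big}}^{(k)}(x;\Pc)\neq \ones\{q_n(x)\ge 1/2\}\bigr)\le 2\exp\bigl(-2M(q_n(x)-1/2)^2\bigr).
\]
The excess risk and CIS then follow from the usual margin-based integration $\E_\Pc[R(\gh_{\mathsf{big}}^{(k)})]-R^*\le \int 2|\eta(x)-1/2|\Pr(\gh_{\mathsf{big}}^{(k)}(x)\neq g(x))\mu(\diff x)$ once we understand how $|q_n(x)-1/2|$ relates to $|\eta(x)-1/2|$.

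For the growing-$k$ case I would invoke the standard single-split $k$-NN concentration from \citep{Chaudhuri--Dasgupta2014}: the NN radius satisfies $\mu(\closedball(x,r_k(x;\Dc_1)))=O(k/n)$ on a high-probability event, so Assumption~\ref{assum:smooth} and a Bernoulli tail bound give $|q_n(x)-g(x)|\le \exp(-ck(\eta(x)-1/2)^2)$ for $x$ outside an $O((k/n)^\alpha)$-band around the decision boundary. With $k=k_o N^{2\alpha/(2\alpha+1)-\gamma}$ and $M=N^\gamma$ we have $(k/n)^\alpha\asymp N^{-\alpha/(2\alpha+1)}$; selecting the margin $\Delta$ at the same scale and combining the Hoeffding-amplified tail with Assumption~\ref{assum:margin} yields excess risk $O(\Delta^{\beta+1})=O(N^{-\alpha(\beta+1)/(2\alpha+1)})$ and CIS $O(\Delta^\beta)$, as claimed.

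For the fixed-$k$ case, the mechanism changes qualitatively: because the $k$ nearest neighbors collapse to $x$ under Assumption~\ref{assum:doubling_homogeneous}, one does \emph{not} have concentration of $\etah^{(k)}(x;\Dc_1)$ on $\eta(x)$; instead $\etah^{(k)}(x;\Dc_1)\to \Binom(k,\eta(x))/k$ in distribution, so $q_n(x)\to h_k(\eta(x))\defeq \Pr(\Binom(k,\eta(x))\ge k/2)$ as $n\to\infty$. The key one-dimensional lemma I would prove is the \emph{linear} lower bound $|h_k(\eta)-1/2|\ge c_k|\eta-1/2|$ on a fixed neighborhood of $1/2$ (immediate from odd-symmetry when $k$ is odd; an analogous linearization around the shifted crossing point handles even $k$). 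Adding the smoothness-driven deviation $|q_n(x)-h_k(\eta(x))|=O((k/n)^\alpha)=O((M/N)^\alpha)$ from Assumption~\ref{assum:smooth}, one obtains, whenever $|\eta(x)-1/2|\ge 4A(M/N)^\alpha/c_k$, the linear bound $|q_n(x)-1/2|\ge \tfrac{c_k}{2}|\eta(x)-1/2|$---in sharp contrast to the exponential bound available in the growing-$k$ regime.

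Plugging this linear lower bound into the Hoeffding tail gives the Gaussian-type decay $\exp(-c_k^2 M|\eta(x)-1/2|^2)$ outside a margin $\partial\eta_\Delta$, while inside the margin the pointwise contribution is at most $\Delta$. Choosing $\Delta\asymp M^{-1/2}=N^{-\gamma/2}$ balances the two pieces:
\begin{align*}
\E_\Pc[R(\gh_{\mathsf{big}}^{(k)})]-R^*
&\lesssim \mu(\partial\eta_\Delta)\Delta+\int_\Delta^{1/2}t\cdot t^{\beta-1}e^{-c_k^2 Mt^2}\diff t
\lesssim \Delta^{\beta+1}=N^{-\gamma(\beta+1)/2},
\end{align*}
and the analogous CIS computation starting from $\CIS_N(\gh_{\mathsf{big}}^{(k)})\le 2\E_X[q_n(X)(1-q_n(X))]$ yields $O(\Delta^\beta)=O(N^{-\gamma\beta/2})$. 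The restriction $\gamma<2\alpha/(2\alpha+1)$ is exactly the condition $(M/N)^\alpha\ll M^{-1/2}$, ensuring the smoothness bias does not destroy the linear lower bound at the chosen $\Delta$. The hardest step will be the linearization of $h_k$ together with its careful propagation through the $k$-NN radius fluctuation---in particular, handling the even-$k$ case where $h_k(1/2)\neq 1/2$ so that the "neutral" threshold must be tracked as a finite-$k$ bias---and verifying that the Hoeffding fluctuation can be genuinely interchanged with the smoothness bias when computing $q_n(x)$ (as opposed to its limit $h_k(\eta(x))$).
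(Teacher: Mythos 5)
First, a point of reference: the paper does not prove this theorem at all --- it is quoted (``rephrased'') from \citet{Qiao--Duan--Cheng2019}, with the fixed-$k$ part only ``informally alluded to'' there. So there is no in-paper proof to compare against; your reconstruction follows exactly the architecture the paper attributes to Qiao et al.\ (per-split behavior plus majority-vote amplification across i.i.d.\ splits), and the skeleton --- i.i.d.\ Bernoulli$(q_n(x))$ indicators, Hoeffding over $M$ splits, margin integration, and the observation that $\gamma<\frac{2\a}{2\a+1}$ is precisely $(M/N)^\a\ll M^{-1/2}$ --- is sound. This stands in deliberate contrast to the paper's own method for its $(M,k)$-NN rules, which aggregates all $kM$ labels and uses the distance-selective $(M,L,k)$-NN rule as a proof device to avoid exactly the per-split Bernoulli bottleneck you exploit.

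There are two genuine gaps. First, in the growing-$k$ case your only quantitative control on the per-split vote is the upper Chernoff bound $|q_n(x)-g(x)|\le e^{-ck(\eta(x)-1/2)^2}$, which is vacuous (gives $q_n<1/2$ on the favorable side) whenever $k(\eta(x)-1/2)^2\lesssim 1$. With $k=k_oN^{\frac{2\a}{2\a+1}-\gamma}$ the band $|\eta(x)-1/2|\lesssim k^{-1/2}=N^{-\frac{\a}{2\a+1}+\frac{\gamma}{2}}$ is strictly wider than the target margin $\Delta\asymp N^{-\frac{\a}{2\a+1}}$ for every $\gamma>0$, and bounding the mistake probability by $1$ there yields only $O(k^{-(\b+1)/2})$, which is worse than claimed. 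What saves the rate is the product $Mk\asymp N^{\frac{2\a}{2\a+1}}$: you need the \emph{lower} bound $|q_n(x)-1/2|\gtrsim\min\{1,\sqrt{k}\,|\eta(B)-1/2|\}$ (binomial anti-concentration / CLT linearization), so that the outer Hoeffding exponent becomes $Mk\,t^2\asymp (t/\Delta)^2$ and the layer-cake integral closes at $O(\Delta^{\b+1})$. This is exactly the $h_k$-linearization you develop for fixed $k$; it must be applied, with the $\sqrt{k}$ dependence made explicit, in the growing-$k$ case as well. Second, for even $k$ the tie atom at $k/2$ is resolved in favor of class $1$ by the definition $\ones(\etah^{(k)}\ge 1/2)$, so $h_k(1/2)=\tfrac12+\tfrac12\binom{k}{k/2}2^{-k}>\tfrac12$ and the crossing point $\eta^*$ with $h_k(\eta^*)=1/2$ is a fixed constant strictly below $1/2$. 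The majority vote then converges to $\ones\{\eta(x)\ge\eta^*\}$, which disagrees with the Bayes classifier on the fixed-measure set $\{x:\eta^*<\eta(x)<1/2\}$; ``tracking the shifted crossing point'' cannot repair this, so the fixed-$k$ claim requires odd $k$ (e.g.\ $k=1$, the case of interest) or randomized tie-breaking. A minor slip: the CIS bound should be stated through the aggregated classifier's output probability $Q_M(x)=\Pr(\gh_{\mathsf{big}}^{(k)}(x;\Pc)=1)$, as $\CIS_N\le 2\E_X[Q_M(X)(1-Q_M(X))]$, or more simply via $\CIS_N(\gh)\le 2\E[\Pr(\gh(X;\Dc)\neq\gbayes(X))]$ as in the paper's own CIS argument --- not through $q_n$.
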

We note that the second part of the statement is only informally alluded to in the section on experiments of \cite{Qiao--Duan--Cheng2019}.
In the first part of the statement, $k$ must grow to infinity as $N\to\infty$, and thus if we choose $M=N^\gamma$, $\gamma$ has to be strictly less than $\frac{2\a}{2\a+1}$. 
Further, the second part of the statement only guarantees strictly suboptimal rates for fixed $k$; note that the rate exponent $\frac{\gamma(\b+1)}{2}$ is strictly less then $\frac{\a(\b+1)}{2\a+1}$, since $0<\gamma<\frac{2\a}{2\a+1}$.
Their analysis relies on the assumption that the $k$-NN classifier becomes asymptotically consistent for each $k$, and it cannot properly handle the interesting case of fixed $k$'s.
Based on our asymptotic argument in Section~\ref{sec:intuition_classification}, the growing-$k$ requirement is not necessary.
It is also worth noting that the number of splits $M=N^{\gamma}$ is restricted to be strictly slower than $\Th(N^{\frac{2\a}{2\a+1}})$, which is allowed in our analysis as the optimal choice. 
Their technique is also not readily applicable for analyzing a regression algorithm.

In contrast, in the current paper, the $(k,M)$-NN classifier takes the majority over \emph{all the $kM$ returned labels} and we establish the (near) rate-optimality \emph{for any fixed $k\ge 1$}, as long as $M$ grows properly. This implies that the $M$ sets of $k$-NN labels over subsets are almost statistically equivalent to $\Th(M)$-NN labels over the entire data.
Our analysis is based on the \emph{refined aggregation scheme} to be discussed in the next section, which provides a careful control on the behavior of distributed nearest neighbors and is naturally compatible with the analysis of the regression rule.

We remark that the bigNN rule and the $(k,M)$-NN classifier are equivalent for the most practical case of $k=1$, and we observed in our experiments that both schemes showed similar performance even for small $k$'s (data not shown). 
However, we emphasize that the suboptimality in the small-$k$ regime of the bigNN classifier in their analysis suggests to preclude the use of small $k$ in practice, whereas our analysis shows that fixed $k$, even $k=1$, suffices for optimal inference.

\subsubsection{A Refined Aggregation Scheme with Distance-Based Selection}
\label{sec:selective}
As alluded to earlier, we can remove the logarithmic factors in the guarantees of Theorems~\ref{thm:regression} and \ref{thm:classification_rate_margin} with a refined aggregation scheme which we call the \emph{distance-selective aggregation}.
With an additional hyperparameter $L\in \Natural$ such that $1\le L\le M$,
we select $L$ data splits out of the total $M$ splits based on the $k$-th-NN distances from the query point to each data split instances. 
Formally, if $m_1,\ldots,m_L$ denote the $L$-smallest values out of the $(k+1)$-th-NN distances $(r_{k+1}(x;\Xb_m))_{m=1}^M$, we take the partial average of the corresponding regression estimates:
\[
\selectreg{k}{M}{L}(x)\defeq\selectreg{k}{M}{L}(x;\partitions)\defeq \frac{1}{L}\sum_{j=1}^L \nnreg{k}(x;\Dc_{m_j}).
\]

We call the resulting rule the \emph{$L$-selective $M$-split $k$-NN (or $(k,M,L)$-NN in short) regression rule} and analogously define the \emph{$(k,M,L)$-NN classifier} $\selectcls{k}{M}{L}(x)$ as the corresponding plug-in classifier, \ie
\[
\selectcls{k}{M}{L}(x)\defeq \ones\Bigl(\selectreg{k}{M}{L}(x)\ge \half\Bigr).
\]
Intuitively, it is designed to filter out some possible \emph{outliers} based on the $(k+1)$-th-NN distances, since a larger $(k+1)$-th-NN distance to the query point likely indicates that the returned estimate from the corresponding group is more unreliable.\footnote{We use the $(k+1)$-th-NN distance instead of $k$-th-NN distance due to a technical reason for classification; see Lemma~\ref{lem:label_concentration_new} in Appendix.
For regression, our analysis remains valid for the $k$-th-NN distance.}
Note that the $(k,M,L)$-NN rules become equivalent to $(k,M)$-NN rules if $L=M$.

We can prove minimax optimality of the refined rules without the extra logarithmic factors, as shown below.
For the sake of conciseness, we present more user-friendly corollaries of the full statements Theorem~\ref{thm:regression_selective} (regression) and Theorem~\ref{thm:classification_rate_margin_selective} (classification) in Appendix.

\begin{corollary}[Regression]%
\label{cor:regression_selective_simple}
Suppose that Assumptions~\ref{assum:doubling_homogeneous}, \ref{assum:holder}, and \ref{assum:boundedness} hold and the support of $\mu$ is bounded.
Then, there exists a fixed constant $c\in(0,1)$ such that 
for $L\defeq \lceil(1-c)M\rceil$, if $kM=\Th(N^{\frac{2\aH}{2\aH+d}})$,
\begin{align*}
\E_{\partitions}\|\selectreg{k}{M}{L}-\eta\|_2
&=O\bigl(N^{-\frac{\aH}{2\aH+d}}\bigr).
\end{align*}
\end{corollary}

\begin{corollary}[Classification]
\label{cor:classification_rate_margin_selective_simple}
Suppose that Assumptions~\ref{assum:smooth} and \ref{assum:margin} hold.
Then, there exists a fixed constant $c\in(0,1)$ such that 
for $L\defeq \lceil(1-c)M\rceil$, if $kM=\Th(N^{\frac{2\a}{2\a+1}})$, we have
\begin{align*}
\E_\partitions[R(\selectcls{k}{M}{L})]-R^*
&=O(N^{-\frac{\a(\b+1)}{2\a+1}})\quad\text{and}\quad
\\
\CIS_N(\selectcls{k}{M}{L})&= O(N^{-\frac{\a\b}{2\a+1}}).
\end{align*}
\end{corollary}

Note that, unlike the previous statements for the $(k,M)$-NN rules, \ie Theorems~\ref{thm:regression} and \ref{thm:classification_rate_margin}, which are only valid for fixed $k$'s, we provide analyses that hold for an arbitrary $k$. This enables a strong claim that $(k,M,L)$-NN rules behave same as $\Th(kM)$-NN rules. 
As predicted by theory, in our Gaussian experiments, the $(k,M,L)$-NN rules exhibited almost same rates as $(k,M)$-NN rules, but with slightly smaller errors ; see Fig.~\ref{fig:toy_mog}.
We summarize the convergence rate guarantees for the classifiers discussed so far in Table~\ref{table:comparison_smooth_classification}. 

\begin{table*}[t]
\small
\centering
    {%
    \begin{tabular}{l c c c}
    \toprule
        Algorithms
         & \makecell[c]{No. splits $M$} & \makecell[c]{Base $k$} & Convergence rates
         \\
    \midrule
        \makecell[l]{Standard $k$-NN classifier~\\\citep{Chaudhuri--Dasgupta2014}}  & 1 & $\Theta( N^{\frac{2\a}{2\a+1}})$ & $O(N^{-\frac{\a(\b+1)}{2\a+1}})$\\
        \midrule
        \multirow{ 2}{*}{\makecell[l]{Big $k$-NN classifier $(\gamma\in(0,\frac{2\a}{2\a+1}))$~\\\citep{Qiao--Duan--Cheng2019}}}
        & $\Theta(N^{\gamma})$ & $\Theta( N^{\frac{2\a}{2\a+1}-\gamma})$ & $O(N^{-\frac{\a(\b+1)}{2\a+1}})$\\
        & $\Theta(N^\gamma)$
        & $\Th(1)$
        & $O(N^{-\frac{\gamma(\b+1)}{2}})$\\
        \midrule
        \makecell[l]{$(k,M)$-NN classifier} &
        $\Th(N^{\frac{2\a}{2\a+1}})$
        & $\Th(1)$
        & $\bigOtilde(N^{-\frac{\a(\b+1)}{2\a+1}})$
        \\
        \makecell[l]{$(k,M,L)$-NN classifier~(Section~\ref{sec:selective})} &
        \multicolumn{2}{c}{$kM=\Theta(N^{\frac{2\a}{2\a+1}})$}
        & $O(N^{-\frac{\a(\b+1)}{2\a+1}})$\\
    \bottomrule
    \end{tabular}}
\caption{Summary of the choices of parameters $k$ and the number $M$ of splits with respect to the size $N$ of the entire data, for minimax optimal classification under an $(\a,A)$-smooth conditional probability $\eta$ (Assumption~\ref{assum:smooth}) in $(\Xc,\rho,\mu)$ with Tsybakov margin condition (Assumptions~\ref{assum:margin}).
Note that the choice of the growing $k$ for the big $k$-NN classifier is suggested by \citet{Qiao--Duan--Cheng2019}. 
When $k=1$, the big $k$-NN classifier and $(k,M)$-NN classifier become equivalent, and our tightened analysis shows that the $(k,M)$-NN classifier for any fixed $k$ is nearly minimax optimal up to polylogarithmic factors, and the $(k,M,L)$-NN classifier is even exactly minimax optimal.
}\label{table:comparison_smooth_classification}
\end{table*}

A practitioner may wonder about a theoretically suggested range of the truncation factor $c$ to guarantee the convergence rates when $k$ is fixed. 
In Fig.~\ref{fig:maximum_allowed_factor}, we visualize the maximum allowed selection ratio $1-c\approx\frac{L}{M}$ for each fixed $k$ to guarantee the established convergence rates in Corollaries~\ref{cor:regression_selective_simple} and \ref{cor:classification_rate_margin_selective_simple}.
\begin{figure}[htb]
\centering
\includegraphics[width=0.45\textwidth]{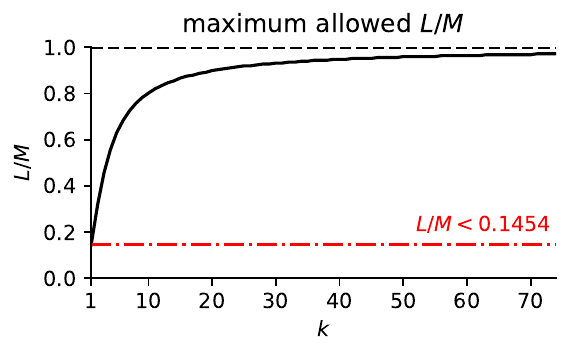}
\caption{Maximum allowed ratio $\frac{L}{M}$ indicated by our theory for different $k$'s, when $k$ is kept fixed. This plot summarizes the information in Fig.~\ref{fig:good_gamma_tau} in Appendix.}
\label{fig:maximum_allowed_factor}
\end{figure}

As one might expect, this plot shows that while only approximately $14.5\%$ of the $M$ batches of $k$-NN information need to be used for the theory to take effect when $k=1$, a larger fraction of the information can be retained and used with a larger $k$, \eg about $80\%$ with $k=10$.
In our experiments, however, we used $c=1/2$ with $k\in \{1,3\}$ with varying $M$, and they also exhibited optimal rates in a synthetic experiment. This suggest that the selection can be done slightly less aggressive in practice.
The rationale behind these number is based on our concentration bound for the distance-selective rules; see Lemma~\ref{lem:key_selective}. 
We refer an interested reader to its proof and the following discussion how the values in Fig.~\ref{fig:maximum_allowed_factor} can be computed.
Unfortunately, there is no closed form expression for the maximum allowed selection ratio.

Finally, we remark that we use the $(k,M,L)$-NN rules as a proof device for analyzing the $(k,M)$-NN rules as alluded to earlier. 
The difficulty in directly analyzing the $(k,M)$-NN rules is that we cannot control possible \emph{outliers} in the NNs from each split of data. 
To circumvent this, we use a $(k,M,L)$-NN rule with a carefully chosen $L$, which rejects possible outliers.
In Appendix, we first analyze the $(k,M,L)$-NN rules as these are more straightforward to analyze, and then present the analyses of the $(k,M)$-NN rules to highlight additional technicalities.

\section{Density Estimation}
\label{sec:density}
For density estimation, we assume $\Xc=\Real^d$ and the Euclidean distance $\rho(x,y)=\|x-y\|_2$ for simplicity, and that the underlying measure $\mu$ has density $p$.
Given data $\datasetX=X_{1:N}=\{X_i\}_{i=1}^N$ drawn \iid from $\mu$, the goal of density estimation is to design a density estimator (or a density estimation procedure) $\ph(\cdot)=\ph(\cdot;\datasetX)\suchthat \Xc\to\Real_+$ based on the data such that the estimate $\ph(x)$ is close to the true density $p(x)$ for any $x\in\Xc$ under a certain criterion, such as the mean squared error (MSE) $\E_{\datasetX}[(\ph(x;\datasetX)-p(x))^2]$.

\subsection{\texorpdfstring{$M$}{M}-Split \texorpdfstring{$k$}{k}-NN Density Estimation Rule}
We now propose a new density estimator based on distributed neighbors which is provably rate-optimal with fixed $k$'s. 
Suppose that we are given a data set $\Xb=X_{1:N}$ drawn \iid from $p(x)$ and randomly split data into disjoint subsets $\Xb_{1:M}=\{\Xb_1,\ldots,\Xb_M\}$, where each subset $\Xb_m=(X_{mi})_{i=1}^{n}$ contains $n$ sample points. Recall that we assume that the total sample size satisfies $N=Mn$.

The proposed estimator is the simple arithmetic average of the $k$-NN density estimators over the data splits, that is,
\begin{align*}
\splitdensity{k}{M}{\AM}(x;\Xb_{1:M})
&\defeq \frac{1}{M}\sum_{m=1}^M \nndensity{k}(x;\Xb_m)
=\frac{1}{M}\sum_{m=1}^M \frac{k-1}{U_k(x;\Xb_m)}.
\numberthis\label{eq:density_am}
\end{align*}
Note that this estimator requires at least $k\ge 2$ to be well-defined, but based on the asymptotic argument for the variance of the $k$-NN density estimator in Section~\ref{sec:intuition_density}, we need $k\ge 3$.

Note that the analysis is straightforward, since the estimator is an average of the truncated $k$-NN estimator: it inherits the same bias of the constituent estimators, which asymptotically vanishes even for a fixed $k\ge 3$, while the variance is $M$ times smaller. 
Hence, this estimator can emulates the growing-$k$ behavior of the standard $k$-NN density estimator by growing $M$ to let its variance vanish.

\subsection{Performance Guarantee}
As remarked by \citet{Singh--Poczos2016}, the standard $k$-NN density estimate $\nndensity{k}(x)$ without truncation is highly biased when $p(x)$ is low. 
For example, \citet{Fukunaga--Hostetler1973,Mack--Rosenblatt1979} showed that for $\sigma$-H\"older smooth densities, 
\begin{align*}
\abs{\E[\nndensity{k}(x)]-p(x)}\asymp \Bigl(\frac{k}{np(x)}\Bigr)^{\frac{\sigma}{d}}.
\end{align*}
Hence, for nonnegative sequences $(\tau_n)_{n\ge 1}$ and $(\nu_n)_{n\ge 1}$, we define a \emph{truncated} version of the density estimator
\begin{align}
\nndensitytr{k}{}(x)
\defeq \nndensitytr{k}{}(x;\Xb)\defeq \nndensity{k}(x;\Xb)\ones_{(\tau_n,\nu_n)}(U_{k}(x;\Xb)).
\nonumber
\end{align}
Here we note that the lower truncation is redundant and can be set always 0 for the estimator in \eqref{eq:density_am}.
However, for a more general treatment of a class of consistent $(k,M)$-NN density estimators in the next section, we will keep the lower truncation in the definition.

Accordingly, we also consider and analyze a truncated version of the $(k,M)$-density estimator defined as follows:
\begin{align}
\splitdensitytr{k}{M}{\AM}(x;\Xb_{1:M})
\defeq \frac{1}{M}\sum_{m=1}^M \nndensitytr{k}{}(x;\Xb_m).
\nonumber
\end{align}

For the convergence rate analysis of the density estimators, we consider a more general notion of $\sigma$-H\"older continuity than Assumption~\ref{assum:holder}, which allows the order $\sigma$ to be greater than 1.
\begin{definition}
\label{def:Holder}
For $\sigma>0$ and $S>0$, a function $h\suchthat\Real^d\to \Real$ is said to be \emph{$(\sigma,S)$-H\"older continuous}
over an open subset $\Omega\subseteq \Real^d$ if
$h$ is continuously differentiable over $\Omega$ up to order $\kappa\defeq \lceil\sigma\rceil-1$ and
\begin{align}
\sup_{\substack{\rb\in\Integer_+^d\\ 
				\abs{\rb}=\kappa}}
\sup_{\substack{y,z\in\Omega\\ 
                y\neq z}}
	\frac{\abs{\partial^{\rb}h(y)-\partial^{\rb}h(z)}}{\norm{y-z}^{\b}}
    \le S,
\nonumber
\end{align}
where $\b\defeq \sigma-\kappa$. 
Here we use a multi-index notation (see, \eg \cite[Ch.~8]{Folland2013}), 
that is, $\abs{\rb}\defeq r_1+\cdots+r_d$ for $\rb\in \Integer_+^d$ and $\partial^\rb h(x)\defeq \partial^\kappa h(x)/(\partial x_1^{r_1}\cdots\partial x_d^{r_d}).$ 
A function $h$ is said to be \emph{locally} $(\sigma,S)$-H\"older continuous, if $h$ is $(\sigma,S)$-H\"older continuous over some open neighborhood of $x$.
\end{definition}

\begin{restatable}{theorem}{ThmAMdensityRate}
\label{thm:agg_density_estimate_l2_rate}
For $x\in\supp(p)$, suppose that $p$ is locally $(\sigma,S)$-H\"older continuous for $\sigma\in(0,2]$.
Then for any fixed $k\ge 3$, $\tau_n=0$, $\nu_n=\Theta((\log n)^{1+\eps})$ for some $\eps>0$, we have, for $\zeta\defeq\frac{\sigma}{d}\wedge 1$, 
\begin{align}
\E_{\Xb_{1:M}}[(\splitdensitytr{k}{M}{\AM}(x;\Xb_{1:M})-p(x))^2]
&=\bigOtilde(n^{-2\zeta}+M^{-1}).
\nonumber
\end{align}
\end{restatable}
In particular, if we set $M=\Theta(N^{\frac{2\zeta}{1+2\zeta}})$, then
\begin{align}
\E[(\splitdensity{k}{M}{\AM}-p(x))^2]
&=\bigOtilde(N^{-\frac{2\zeta}{1+2\zeta}}).
\nonumber
\end{align}
If $d\le\sigma$, then $\zeta=1$, and thus the MSE rate becomes $\bigOtilde(N^{-\frac{2}{3}})$. This happens only if $d\in\{1,2\}$ and $d\le \sigma$.
If $d\ge\sigma$, then $\zeta=\frac{\sigma}{d}$, and thus the MSE rate becomes $\bigOtilde(N^{-\frac{2\sigma}{d+2\sigma}})$, which is minimax optimal for $\sigma$-H\"older smooth densities; see, \eg~\citep{Dasgupta--Kpotufe2014}.

We briefly remark a limitation of this analysis.
The bias rate under H\"older smoothness of order $\sigma>0$ in our analysis is at most $O(n^{-(\frac{\sigma}{d}\wedge 1}))$, which suffers the curse of dimensionality.
Moreover, this analysis cannot adapt to a higher-order smoothness for $\sigma>2$, as we rely on Lemma~\ref{lem:GOVLemma4_smoothing}, which cannot be improved for $\sigma>2$.
In general, this is an inherent limitation of estimation methods based on positive-valued kernels. We refer an interested reader to a more detailed discussion in \citep{Ryu--Ganguly--Kim--Noh--Lee2022} and references therein.

\subsection{Other Variants}
In the previous sections, we show that the $(k,M)$-NN estimator $\splitdensity{k}{M}{\AM}(x)$ enjoys a near minimax optimality under certain regularity conditions. 
However, this estimator requires the base $k$ to be at least 3. 
A natural question is: can we construct another $(k,M)$-NN density estimator, which is provably consistent even for $k=1$?
In this section, we answer the question in the affirmative, by constructing a family of consistent $(k,M)$-NN estimators based on the asymptotic behavior of the $k$-NN statistic $U_{kn}(x)$.
The key idea is that the statistic in the population limit behaves as a Gamma random variable as stated in Proposition~\ref{prop:knndist}, and there are various ways to combine Gamma random variables to relate the target density value with the expectation of the combined random variable.

As a first example, we can consider the following estimator
\begin{align}
\splitdensity{k}{M}{\HM}(x)\defeq \splitdensity{k}{M}{\HM}(x;\Xb_{1:M})
\defeq \frac{kM}{\sum_{m=1}^M U_k(x;\Xb_m)}.
\label{eq:density_hm}
\end{align}
It can be also viewed as a (bias-corrected) \emph{harmonic mean} of the standard $k$-NN density estimates $\{\nndensity{k}(x;\Xb_m)\}_{m=1}^M$, \ie
\[
\splitdensity{k}{M}{\HM}(x;\Xb_{1:M})
=\frac{kM-1}{(k-1)M}\Bigl(\frac{1}{M}\sum_{m=1}^M \nndensity{k}(x;\Xb_m)^{-1} \Bigr)^{-1},
\]
and thus the notation $\splitdensity{k}{M}{\HM}(x;\Xb_{1:M})$.
However, since $k=1$ is not allowed in this expression, it does not provide a correct intuition as $k=1$ is admissible in \eqref{eq:density_hm}.
Rather, this estimator can be justified by the following heuristic asymptotic argument.
For fixed $k$ and $M$, again from Proposition~\ref{prop:knndist}, we observe that 
$U_k(x;\Xb_m)$ converges to $U_{k\infty}^{(m)}(x)$ in distribution as $|\Xb_m|=n\to\infty$ for each $m\in\{1,\ldots,M\}$, where $(U_{k\infty}^{(m)}(x))_{m=1}^M$ are \iid random variables with distribution $\GammaDist(k,p(x))$.
Hence, by Slutsky's theorem, the sum of independent random variables $U_k(x;\Xb_1)+\ldots+U_k(x;\Xb_M)$ converges to the sum of independent Gamma random variables $U_{k\infty}^{(1)}(x)+\ldots+U_{k\infty}^{(M)}(x)\sim \GammaDist(kM,p(x))$ in distribution as $n\to\infty$.
Hence, similar to \eqref{eq:heuristic_bias} and \eqref{eq:heuristic_variance}, we expect to have
\begin{align*}
\lim_{n\to\infty} \E[\splitdensity{k}{M}{\HM}(x;\Xb_{1:M})]
&=\E\Bigl[\frac{kM-1}{U_{k\infty}^{(1)}(x)+\ldots+U_{k\infty}^{(M)}(x)}\Bigr]=p(x),\quad\text{and}
\\
\lim_{n\to\infty} \Var(\splitdensity{k}{M}{\HM}(x;\Xb_{1:M}))
&=\Var\Bigl(\frac{kM-1}{U_{k\infty}^{(1)}(x)+\ldots+U_{k\infty}^{(M)}(x)}\Bigr)=\frac{p(x)^2}{kM-2}.
\end{align*}
Hence, even if $k\ge 1$ is fixed, the proposed estimator is expected to be consistent as long as $M\to\infty$.

We can also consider a \emph{geometric-mean} version:
\begin{align}
\splitdensity{k}{M}{\GM}(x;\Xb_{1:M})
\defeq e^{\digamma(k)} \Bigl(\prod_{m=1}^M \frac{1}{U_k(x;\Xb_m)} \Bigr)^{\frac{1}{M}},
\nonumber
\end{align}
where $\digamma(x)$ denotes the digamma function~\citep{Korn--Korn2000}.
This design is heuristically justified as follows: since
$U_{kn}^{\infty}(x)\sim\GammaDist(k,p(x))$ and $\E[\log U_{kn}^\infty(x)]=\digamma(k)-\log p(x)$,
by the weak law of large number,
\[
\log\splitdensity{k}{M}{\GM}(x;\Xb_{1:M})
=\digamma(k)-\frac{1}{M}\sum_{m=1}^M \log U_{k\infty}^{(m)}(x)
\to \log p(x)
\]
in probability as $M\to\infty$, and by the continuity mapping theorem, $\splitdensity{k}{M}{\GM}(x;\Xb_{1:M})$ also converges to $p(x)$ in probability.

Indeed, the three, AM, GM, and HM, estimators constructed above can be understood in a unified way, borrowing the inverse Laplace transform framework from \citep{Ryu--Ganguly--Kim--Noh--Lee2022}.
Suppose that we wish to estimate a function of density value $f(p(x))$ given a one-to-one function $f\suchthat \Real_+\to\Real$ with fixed $k\ge 1$.
For example, the logarithmic function $f(p)=\log p$, polynomial functions $f(p)=p^{\a-1}$ ($\a\neq 1$), and the exponential function $f(p)=e^{-\b p}$ ($\b\neq 0$) are canonical examples; see Table~\ref{table:estimator_functions_single}.

\begin{table}[htb]
\small
\centering
\begin{tabular}{l l l}
\toprule
\makecell[l]{$f(p)$}
& \makecell[l]{$\displaystyle \phi_k^{(f)}(u)$} & Condition\\
\midrule
$\displaystyle
\frac{p^{\a}}{\a} ~(\a \neq 0)$
& \makecell[l]{$\displaystyle \frac{\Gamma(k)}{\Gamma(k-\a)} \frac{u^{-\a}}{\a}$
} & $k>\a$ \\
$\displaystyle
\log{p}$
& $\displaystyle -\log u+\digamma(k)$
& $k\ge 1$\\
$e^{-\b p}~ (\b>0)$
& \makecell[l]{$\displaystyle \Bigl(1-\frac{\b}{u}\Bigr)^{k-1}\ones_{[\b,\infty)}(u)$%
} & \makecell[l]{
$k\ge 1$
}\\
\bottomrule
\end{tabular}
\caption{Canonical examples of monotone functions $f(p)$ and their estimator functions $\phi_{k}(u)$; see \citep[Table~I]{Ryu--Ganguly--Kim--Noh--Lee2022} for other examples.
In the first row, $\digamma(\a)$ denotes the digamma function~\citep{Korn--Korn2000}. The last column specifies a sufficient condition of $k$ for the inverse Laplace transform in the definition of $\phi_k^{(f)}(u)$ to be well-defined.}
\label{table:estimator_functions_single}
\end{table}

Suppose that we are given $M$ independent Gamma random variables $U_{k\infty}^{(m)}\sim \GammaDist(k, p)$ for $m=1,\ldots,M$.
If there exists a function $\phi_k^{(f)}$ such that $\E[\phi_k^{(f)}(U_{k\infty}^{(1)})]=f(p)$, we can construct a corresponding $(k,M)$-NN density estimator as
\[
\splitdensity{k}{M}{(f)}(x;\Xb_{1:M})
\defeq
f^{-1}\Bigl(\frac{1}{M}\sum_{m=1}^M \phi_k^{(f)}(U_k(x;\Xb_m))\Bigr).
\]
\citet{Ryu--Ganguly--Kim--Noh--Lee2022} showed that 
such a function $\phi_k^{(f)}$, which is called the \emph{estimator function of order $k$ for the function $f$}, can be defined by the following formula:
\[
\phi_k^{(f)}(u)=\frac{\Gamma(k)}{u^{k-1}}\Lc^{-1}\Bigl\{\frac{f(p)}{p^k}\Bigr\}(u)
\quad \text{for }u>0,
\]
where $\Lc^{-1}(F(p))(u)$ denotes the inverse Laplace transform of a function $p\mapsto F(p)$.
It is immediate to check that the desired relation holds:
\begin{align*}
\E[\phi_k^{(f)}(U_{k\infty}^{(1)})]
&=\int_0^\infty\phi_k^{(f)}(u)\frac{p^k}{\Gamma(k)}u^{k-1}e^{-up}\diff u \\
&= \int_0^\infty \Bigr(\frac{\Gamma(k)}{u^{k-1}}
\Lc^{-1}\Bigl\{\frac{f(p)}{p^k}\Bigr\}(u) \Bigr)\frac{p^k}{\Gamma(k)}u^{k-1}e^{-up}\diff u \\
&=p^k \int_0^\infty e^{-up}
\Lc^{-1}\Bigl\{\frac{f(p)}{p^k}\Bigr\}(u)\diff u\\
&= f(p).
\end{align*}
Note that the AM, HM, and GM estimators introduced above are special instances of this general estimator for $f(p)=p$, $f(p)=p^{-1}$, and $f(p)=\log p$, respectively.
To handle all the three cases in a unified way, we define a function $f_\a\defeq\Real_+ \to \Real$ as
\begin{align}
\label{eq:poly_log_alpha_ftn}
f_\a(p)\defeq \begin{cases}
\frac{p^\a}{\a} & \text{if }\a\neq 0,\\
\log p & \text{if }\a=0.
\end{cases}
\end{align}
Note that for $p>0$, by L'H\^opital's rule, $\lim_{\a\to 0} f_\a(p) =f_0(p)$.

We can now state a more general guarantee on the convergence rate, which subsumes Theorem~\ref{thm:agg_density_estimate_l2_rate} as a corollary when $\a=1$.
For a technical reason, we provide a guarantee on the MSE for estimating $f(p(x))$ instead of the density value itself.
Similar to Theorem~\ref{thm:agg_density_estimate_l2_rate}, we consider the truncated version of the estimator for $f(p(x))$, which is defined as
\begin{align*}
\splitftr{k}{M}(x;\Xb_{1:M})
\defeq \frac{1}{M} \sum_{m=1}^M \phi_k^{(f)}(U_k(x;\Xb_m))\ones_{(\tau_n,\nu_n)}(U_{k}(x;\Xb_m)).
\end{align*}

\begin{restatable}{theorem}{ThmKnnFunctionOfDensityMSE}
\label{thm:function_of_density_knn_estimator_mse}
For $x\in\Xc$, assume that $p$ is locally $(\sigma,S)$-H\"older continuous for some $\sigma\in(0,2]$ at $x$.
Let $\bar{\sigma}_d\defeq \frac{\sigma}{d}$ denote the normalized order of smoothness.
For $\a\in\Real$, consider the function $f(p)=f_\a(p)$ defined in \eqref{eq:poly_log_alpha_ftn}.
For $k>2\a$ fixed, set $\nu_n=\Theta((\log n)^{1+\eps})$ for an arbitrary $\eps>0$.
Set $\tau_n=0$ if $\bar{\sigma}_d\ge \a-1$ and $\tau_n=\Th(n^{-\frac{\bar{\sigma}_d}{k-\bar{\sigma}_d-1}})$ otherwise.
If we define, $\zeta=\bar{\sigma}_d\wedge \frac{k-\a}{k-\bar{\sigma}_d-1} \wedge 1$,
we have
\begin{align*}
\abs{\E_{\Xb}[\splitftr{k}{M}(x;\Xb_{1:M})]-f(p(x))}
&= \bigOtilde\bigl(n^{-2\zeta} +M^{-1}\bigr).
\end{align*}
\end{restatable}

The optimal choice of $M$ remains the same $M=\Theta(N^{\frac{2\zeta}{1+2\zeta}})$ as before, which leads to the rate $\bigOtilde(N^{-\frac{2\zeta}{1+2\zeta}})$.
For the special cases $\a\in\{-1,0,1\}$, since $\sigma>0$, the bias rate exponent becomes $\zeta=\bar{\sigma}_d\wedge 1$ and no truncation is needed, \ie we can always set $\tau_n=0$.

\section{Related Work}
\label{sec:related}
The asymptotic-Bayes consistency and convergence rates of the $k$-NN classifier have been studied extensively in the last century~\citep{Fix--Hodges1951,Cover--Hart1967,Cover1968a,Cover1968b,Wagner1971,Fritz1975,Gyorfi1981,Devroye--Gyorfi--Krzyzak--Lugosi1994,Kulkarni--Posner1995}. 
More recent theoretical breakthroughs include a strongly consistent margin regularized 1-NN classifier~\citep{Kontorovich--Weiss2015}, a universally consistent sample-compression based $1$-NN classifier over a general metric space~\citep{Kontorovich--Sabato--Weiss2017,Hanneke--Kontorovich--Sabato--Weiss2020,Gyorfi--Weiss2021}, nonasymptotic analysis over Euclidean space~\citep{Gadat--Klein--Marteau2016} and over a doubling space~\citep{Dasgupta--Kpotufe2014}, optimal weighted schemes~\citep{Samworth2012}, stability~\citep{Sun--Qiao--Cheng2016}, robustness against adversarial attacks~\citep{Wang--Jha--Chaudhuri2018,Bhattacharjee--Chaudhuri2020}, and optimal classification with a query-dependent $k$~\citep{Balsubramani--Dasgupta--Freund--Moran2019}.
For NN-based regression~\citep{Cover1968a,Cover1968b,Dasgupta--Kpotufe2014,Dasgupta--Kpotufe2019}, we mostly extend the analysis techniques of \citep{Xue--Kpotufe2018,Dasgupta--Kpotufe2019}; we refer the interested reader to a recent survey of \citet{Chen--Shah2018} for more refined analyses. 
For a more comprehensive treatment on the $k$-NN based procedures, see \citep{Devroye--Gyorfi--Lugosi1996,Biau--Devroye2015} and references therein.

The most closely related work, especially considering the portion of this paper on the $(k,M)$-NN classifier, is \citep{Qiao--Duan--Cheng2019} as mentioned above. 
In a similar spirit, \citet{Duan--Qiao--Cheng2020} analyzed a distributed version of the optimally weighted NN classifier of \citet{Samworth2012}.
More recently, \citet{Liu--Xu--Shang2021} studied a distributed version of an adaptive NN classification rule of \citet{Balsubramani--Dasgupta--Freund--Moran2019}.

The idea of an ensemble predictor for enhancing statistical power of a base classifier has been long known and extensively studied; see, \eg \citep{Hastie--Tibshirani--Friedman2009} for an overview. Among many ensemble techniques, bagging~\citep{Breiman1996} and pasting~\citep{Breiman1999} are closely related to this work. 
The goal of bagging is, however, mostly to improve accuracy by reducing variance when the sample size is small and the bootstrapping step is computationally demanding in general; see \citep{Hall--Samworth2005,Biau--Cerou--Guyader2010} for the properties of bagged 1-NN rules.
The motivation and idea of pasting is similar to the $(k,M)$-NN rules, but pasting iteratively evolves an ensemble classifier based on an estimated prediction error based on random subsampling rather than splitting samples.
The $(k,M)$-NN rules analyzed in this paper are non-iterative and NN-based-rules-specific, and assume essentially no additional processing step beyond splitting and averaging.

Beyond ensemble methods, there are other attempts to make NN based rules scalable based on quantization~\citep{Kontorovich--Sabato--Weiss2017,Gottlieb--Kontorovich--Nisnevitch2018,Kpotufe--Verma2017,Xue--Kpotufe2018,Hanneke--Kontorovich--Sabato--Weiss2020} or regularization~\citep{Kontorovich--Weiss2015}, where the common theme there is to carefully select subsample and/or preprocess the labels. 
We remark, however, that they typically involve onerous and rather complex preprocessing steps, which may not be suitable for large-scale data.
Approximate NN (ANN) search algorithms~\citep{Indyk1998approximate,Slaney--Casey2008,Har-Peled--Indyk--Motwani2012} are yet another practical solution to reduce the query complexity, but ANN-search-based rules such as \citep{Alabduljalil--Tang--Yang2013,Anastasiu--Karypis2019} hardly have any statistical guarantee~\citep{Dasgupta--Kpotufe2019} with few exception~\citep{Gottlieb--Kontorovich--Krauthgamer2014,Efremenko--Kontorovich--Noivirt2020}.
\citet{Gottlieb--Kontorovich--Krauthgamer2014} proposed an ANN-based classifier for general doubling spaces with generalization bounds. More recently, \citet{Efremenko--Kontorovich--Noivirt2020} proposed a locality sensitive hashing~\citep{Datar--Immorlica--Indyk--Mirrokni2004} based classifier with Bayes consistency but a strictly suboptimal rate guarantee in $\Real^d$.
In contrast, this paper focuses on \emph{exact}-NN-search based algorithms.

There exists a seeming connection between the proposed distance-selective aggregation and the $k$-NN based outlier detection methods.
\citet{Ramaswamy--Rastogi--Shim2000} and \citet{Angiulli--Pizzuti2002} proposed to use the $k$-NN distance, or some basic statistics such as mean or median of the $k$-NN distances to a query point, as an outlier score; a recent paper by \citet{Gu--Akoglu--Rinaldo2019} analyzed these schemes.
In view of this line of work, the $(k,M,L)$-NN classification and regression rules can be understood as a selective ensemble of \emph{inliers} based on the $k$-NN distances.
It would be an interesting direction to investigate a NN-based outlier detection method for large-scale dataset, extending the idea of the distance-selective aggregation.

The NN-based density estimation approach has
a long history that started from the seminal work by \citet{Loftsgaarden--Quesenberry1965}, which established the consistency of the standard $k$-NN estimator for continuous densities.
\citet{Mack--Rosenblatt1979} analyzed the pointwise bias and variance of the estimator, in a similar spirit to what are established in this paper.
A stronger consistency such as asymptotic normality~\citep{Moore--Yackel1977} and strong uniform consistency~\citep{Devroye--Wagner1977} were also established in the early developments. Recently, \citet{Biau--Chazal--Cohen-Steiner--Devroye--Rodriguez2011} established the asymptotic normality of a weighted version of the estimator, and \citet{Dasgupta--Kpotufe2014} proved a high-probability convergence rate.
To the authors' knowledge, the NN-based density estimation has not been considered in a distributed learning setup.
For the analysis, the current work borrows the tool from a recent literature on fixed-$k$-NN-based density functional estimation, especially that of \citep{Ryu--Ganguly--Kim--Noh--Lee2022}.

\section{Experiments}
\label{sec:exp}
In this section, we numerically test the performance of the proposed $(k,M)$-NN rules for regression, classification, and regression.

\paragraph{Computing resources.}
For each experiment, we used a single machine with one of the following CPUs: (1) {Intel(R) Core(TM) i7-9750H CPU \@ 2.60GHz} with 12 (logical) cores or (2) {Intel(R) Xeon(R) CPU E5-2680 v4 @ 2.40GHz} with 28 (logical) cores.

\paragraph{Implementation.}
All implementations were based on Python 3.8 and we used the NN search algorithms implemented in {scikit-learn} package~\citep{scikit-learn} ver. 0.24.1 and utilized the multiprocessors using the python standard package \texttt{multiprocessing}.
The code can be found at \href{https://github.com/jongharyu/split-knn-rules}{\texttt{https://github.com/jongharyu/split-knn-rules}}.

\subsection{Classification and Regression}

We first present simulated convergence rates of the $(k,M)$-NN classification and regression rules for small $k$, say $k\in\{1,3\}$, are polynomial as predicted by theory with synthetic dataset. 
We then demonstrate that their practical performance is competitive against that of the standard $k$-NN rules with real-world datasets, while generally reducing both validation complexity for model selection and test complexity. 
In both experiments, we also show the performance of the $(k,M,\frac{M}{2})$-NN rules to examine the effect of the distance-selective aggregation.\footnote{As alluded to earlier, we used $k$-th-NN distance in experiments for the distance-selective classification rule instead of $(k+1)$-th-NN distance for simplicity.}

\subsubsection{Simulated Dataset}
\label{sec:exp_synthetic}
We first evaluated the performance of the proposed classifier with a synthetic data following \citet{Qiao--Duan--Cheng2019}.
We consider a mixture of two isotropic Gaussians of equal weight $\half \Nc(\mathbf{0},I_d) + \half\Nc(\mathbf{1},\sigma^2 I_d)$, where $\mathbf{1}\defeq[1,\ldots,1]^T\in\Real^d$ and $I_d\in\Real^{d\times d}$ denotes the identity matrix.
With $d=5$, we tested 3 different values of $\sigma\in\{0.5,2,3\}$ with 5 different sample sizes $N\in\{500,2500,12500,62500,312500\}$.
We evaluated the $(k,M)$-NN rule and $(k,M,\frac{M}{2})$-NN rule for $k\in\{1,3\}$ with $M=10N^{\frac{2\aH}{2\aH+d}}=10N^{\frac27}$ based on $\aH=1$ and $d=5$.
For comparison, we also ran the standard $k$-NN algorithm with $k\in\{1,3,10N^{\frac{7}{2}}\}$.
We repeated experiments with 10 different random seeds and reported the medians and (20\%,80\%) quantiles.

The excess risks are plotted in Figure~\ref{fig:toy_mog}.
We note that the $(1,M)$-NN classifier performs similarly to the baseline $k$-NN classifier across different values of $\sigma$, and the performance can be improved by the $(1,M,\frac{M}{2},M)$-NN classifier. This implies that discarding possibly noisy information in the aggregation could actually improve the performance of the ensemble classifier.
Note also that the convergence of the excess risks of the standard $M$-NN classifier and the $(k,M)$-NN classifiers with $k\in\{1,3\}$ is polynomial (indicated by the straight lines), as predicted by theory.

\begin{figure*}[htb]
\centering
\includegraphics[width=\textwidth]{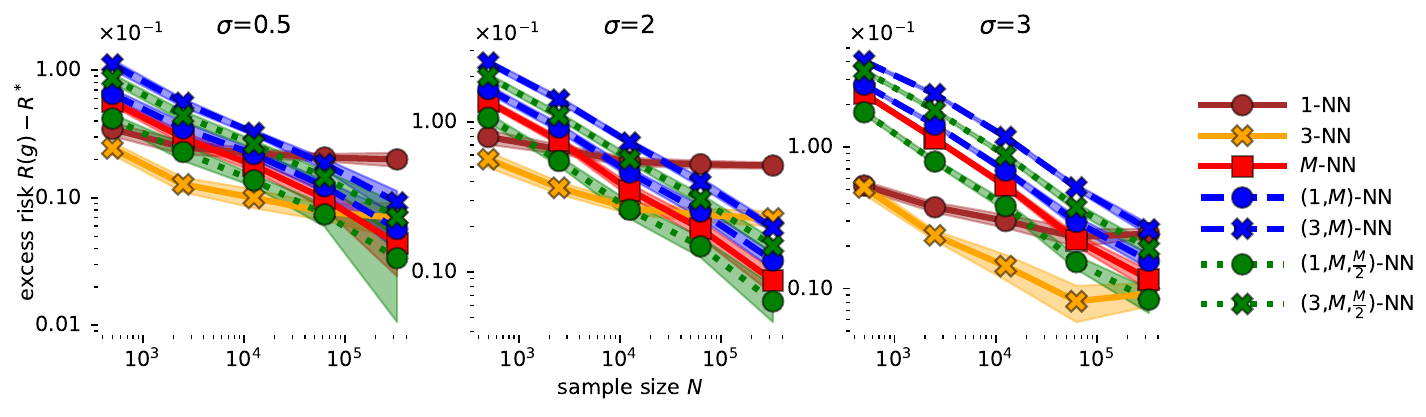}  %
\caption{Summary of the excess risks of the NN classifiers for the mixture of two Gaussians experiments in Section~\ref{sec:exp_synthetic}.}
\label{fig:toy_mog}
\end{figure*}

\subsubsection{Real-World Datasets}

We evaluated the proposed rules with publicly available benchmark datasets from the UCI machine learning repository~\citep{UCI2019} and the OpenML repository~\citep{OpenML2013}, which were also used in \citep{Xue--Kpotufe2018} and \citep{Qiao--Duan--Cheng2019}; see Table~\ref{table:dimension} in Appendix for size, feature dimensions, and the number of classes of each dataset. All data were standardized to have zero mean and unit variances.%

\begin{table}[htb]
\centering
\scriptsize
\begin{adjustbox}{center}
\begin{tabular}{l r r r}
\toprule
Data set & \# training & \# dim. & \# class. \\
\midrule
\makecell[l]{\href{https://archive.ics.uci.edu/ml/datasets/Gisette}{GISETTE}~\citep{Guyon--Gunn--Ben-Hur--Dror2004}} 
    & 7k & 5k & 2\\
\makecell[l]{\href{https://archive.ics.uci.edu/ml/datasets/HTRU2}{HTRU2}~\citep{Lyon--Stappers--Cooper--Brooke--Knowles2016}} 
    & 18k & 8 & 2\\
\makecell[l]{\href{https://archive.ics.uci.edu/ml/datasets/Credit+Approval}{Credit}~\citep{UCI2019}} 
    & 30k & 23 & 2\\
\makecell[l]{\href{https://archive.ics.uci.edu/ml/datasets/MiniBooNE+particle+identification}{MiniBooNE}~\citep{UCI2019}} 
    & 130k & 50 & 2\\
\makecell[l]{\href{https://archive.ics.uci.edu/ml/datasets/SUSY}{SUSY}~\citep{Baldi--Sadowski--Whiteson2014}} 
    & 5000k & 18 & 2 \\
\makecell[l]{\href{https://www.openml.org/t/7356}{BNG(letter,1000,1)}~\citep{OpenML2013}} 
    & 1000k &  17 & 26\\
\midrule
\makecell[l]{\href{https://archive.ics.uci.edu/ml/datasets/YearPredictionMSD}{YearPredictionMSD}~\citep{UCI2019}} 
    & 463k & 90 & 1\\
\bottomrule
\end{tabular}
\end{adjustbox}
\caption{Summary of dimensions of the benchmark datasets. }
\label{table:dimension}
\end{table}

We tested four algorithms.
The first two algorithms are 
(1) the standard 1-NN rule and
(2) the standard $k$-NN rule with 10-fold cross-validation (CV) over an exponential grid $k\in \Kc\defeq\{2^l-1\suchthat 2\le l\le \log_2(\min\{2^{10}, 1+N_{\mathsf{train}}/25\})\}$, where $N_{\mathsf{train}}$ denotes the size of training data.
The rest are (3) the $(1,M)$-NN rule and (4) the $(1,M,\frac{M}{2})$-NN rule both with 10-fold CV over $M\in\Kc$. 
We repeated with 10 different random (0.95,0.05) train-test splits and evaluated first $\min\{N_{\mathsf{test}},1000\}$ points from the test data to reduce the simulation time.
Table~\ref{table:results} summarizes the test errors, test times, and validation times.\footnote{Here, we used a KD-Tree based NN search by default. Since, however, a KD-Tree based algorithm suffers a curse of dimensionality (recall Section~\ref{sec:computation}), we ran additional trials with a brute-force search for high-dimensional datasets $\{\text{GISETTE, YearPredictionMSD}\}$, whose feature dimensions are 5000 and 90, respectively, and report the time complexities in the subsequent rows.}
The optimal $(1,M)$-NN rules consistently performed as well as the optimal standard $k$-NN rules, even running faster than the standard 1-NN rules in the test phase. 
We remark that the optimally tuned $(1,M,\frac{M}{2})$-NN rules (\ie with the distance-selective aggregation) performed almost identical to the $(1,M)$-NN rules, except slight error improvements observed in high-dimensional datasets $\{\text{GISETTE, YearPredictionMSD}\}$.

\begin{table*}[htb]
\small
    \centering
    \begin{adjustbox}{center}
    \resizebox{\textwidth}{!}{
    \begin{tabular}{l r r r r r r r r}
    \toprule
    \multirow{2}{.3in}{Dataset} & 
    \multicolumn{3}{c}{Error (\% for classification)} & \multicolumn{3}{c}{Test time (s)} & \multicolumn{2}{c}{Valid. time (s)} \\
    \cmidrule(lr){2-4}
    \cmidrule(lr){5-7}
    \cmidrule(lr){8-9}
        & 1-NN & $k$-NN & $(1,M)$-NN 
        & 1-NN & $k$-NN & $(1,M)$-NN
        & $k$-NN & $(1,M)$-NN \\
    \midrule
    \makecell[l]{\href{https://archive.ics.uci.edu/ml/datasets/Gisette}{GISETTE}
    } 
        & \meanstd{7.26}{1.65} 
            & \meanstd{\textbf{4.54}}{0.93} 
            & \meanstd{\textbf{5.11}}{1.01}
            (\meanstd{\textbf{4.86}}{0.86})
        & 6.13
            & \textbf{5.75}
            & 6.79 (6.18)
        & \textbf{52} & 262 (270) \\
    \qquad w/ brute-force %
        & - %
            & - %
            & - %
        & 0.30
            & \textbf{0.26}
            & 1.20 (2.06)
        & \textbf{38} & 200 (207) \\
    \makecell[l]{\href{https://archive.ics.uci.edu/ml/datasets/HTRU2}{HTRU2}
    } 
        & \meanstd{2.91}{0.40} & \meanstd{\textbf{2.18}}{0.44} & \meanstd{\textbf{2.08}}{0.28} (\meanstd{\textbf{2.28}}{0.37})
        & 0.18 & 0.18 & \textbf{0.04} (\textbf{0.04})
        & 18 & \textbf{8} (10)\\
    \makecell[l]{\href{https://archive.ics.uci.edu/ml/datasets/Credit+Approval}{Credit}
    } 
        & \meanstd{26.73}{0.99} & \meanstd{\textbf{18.68}}{1.01} & \meanstd{\textbf{18.65}}{1.05} (\meanstd{\textbf{18.93}}{0.95})
        & 0.85 & 1.2 & \textbf{0.2} (\textbf{0.2})
        & 122 & \textbf{25} (29)\\
    \makecell[l]{\href{https://archive.ics.uci.edu/ml/datasets/MiniBooNE+particle+identification}{MiniBooNE}
    } 
        & \meanstd{13.72}{1.57} & \meanstd{\textbf{10.63}}{0.76} & \meanstd{\textbf{10.69}}{0.86} (\meanstd{\textbf{10.62}}{0.64})
        & 1.68 & 2.42 & \textbf{0.98} (\textbf{0.94})
        & 264 & \textbf{88} (92)\\
    \makecell[l]{\href{https://archive.ics.uci.edu/ml/datasets/SUSY}{SUSY}
    } 
        & \meanstd{28.27}{1.50} 
            & \meanstd{\textbf{20.32}}{1.04} 
            & \meanstd{\textbf{20.55}}{1.35} (\meanstd{\textbf{20.52}}{1.31})
        & 32
            & 35
            & \textbf{14} (\textbf{13})
        & 3041 & \textbf{1338} (1362) \\
    \makecell[l]{\href{https://www.openml.org/t/7356}{BNG(letter,1000,1)}
    } 
        & \meanstd{46.13}{1.18} & \meanstd{\textbf{40.88}}{1.12} & \meanstd{\textbf{41.53}}{1.04} (\meanstd{\textbf{40.72}}{0.78})
        & 379 & 350 & 17 (\textbf{14})
        & 2868 & \textbf{619} (959)\\
    \midrule
    \makecell[l]{\href{https://archive.ics.uci.edu/ml/datasets/YearPredictionMSD}{YearPredictionMSD}
    } 
        & \meanstd{7.22}{0.34} & \meanstd{\textbf{6.72}}{0.25} & \meanstd{\textbf{6.79}}{0.22} (\meanstd{\textbf{6.75}}{0.27})
        & 33 & \textbf{31} & 40 (34)
        & 1616 & 431 (\textbf{412})\\
    \qquad w/ brute-force %
        & - %
            & - %
            & - %
        & 15
            & 18
            & \textbf{3.5} (\textbf{3.6})
        & 1529 & \textbf{300} (336) \\
    \bottomrule
    \end{tabular}
    }
    \end{adjustbox}
    \caption{Summary of experiments with benchmark datasets.
    YearPredictionMSD in the last row is a regression dataset.
    Recall that $(1,M)$-NN is a shorthand for the $M$-split 1-NN rules. 
    The values in the parentheses correspond to the $(1, M, \frac{M}{2})$-NN rules.
    The best values are highlighted in bold.}
    \label{table:results}
\end{table*}

Finally, we present Figure~\ref{fig:validation}, which summarizes the validation error profiles from the 10-fold CV procedures for the standard $k$-NN and $(1,M)$-NN classifiers. This shows that the optimal choices of $M$ are always larger, but consistently within a factor from the optimal $k$'s.

\begin{figure*}[!ht]
    \centering
    \includegraphics[width=.8\textwidth]{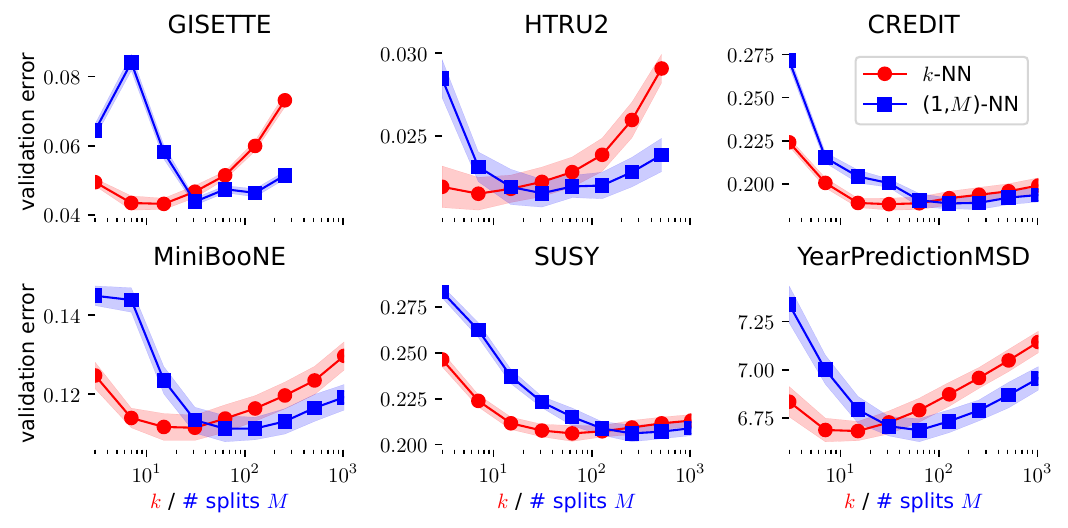}  %
    \caption{Validation error profiles from 10-fold cross validation. Here, as expected, the optimal $M$ chosen for $(1,M)$-NN rules is in the same order of the optimal $k$ for the standard $k$-NN rules.
}
    \label{fig:validation}
\end{figure*}

\newcommand{\kbase}{k_{\mathsf{base}}}
\subsection{Density Estimation}
We now examine the numerical performance of the $(k,M)$-NN density estimation rules.
For the evaluation, we randomly generated mixture of Gaussians (MoG) distributions as follows.
For a given dimension $d\in\{1,2,3,4,5\}$, we randomly generated $10$ centers from the normal distribution $\Nc(\mathbf{0},10I_d)$, and constructed a mixture of the standard normal distributions centered at these centers with equal weights.
We compared the performances of the $K$-NN estimator with the $(\kbase,\frac{K}{\kbase})$-NN (AM, GM, HM) estimators for $\kbase\in\{1,2,3,4,5\}$, except $\kbase\in\{3,4,5\}$ for the AM estimator.
We set $K$ as $K=\lceil \frac{1}{2}N^{\frac{4}{d+4}}\rceil$, which is the optimal choice for the $K$-NN estimator~\citep{Dasgupta--Kpotufe2014}. 
The sample size varied over $\{10^2,10^3,\ldots,10^6\}$, and we generated 1000 independent samples to estimate the mean squared error (MSE) $\E_{X}[(\ph(X;\dataset)-p(X))^2]$.
We repeated the experiments for 10 randomly constructed different MoG distributions.

Fig.~\ref{fig:exp_density} summarizes the MSEs with respect to varying sample sizes. Different dimensions are considered across the rows, and $\kbase$ varies over the columns. The shades indicate (20\%,80\%) quantiles over the 10 random experiments. 
Interestingly, the $(\kbase,\frac{K}{\kbase})$-NN HM estimator performs almost identically to the $K$-NN estimators, and the GM estimator behaves similarly, while the AM estimator performs the worst among the proposals.

\begin{figure}[htb]
\centering
\includegraphics[width=1.\linewidth]{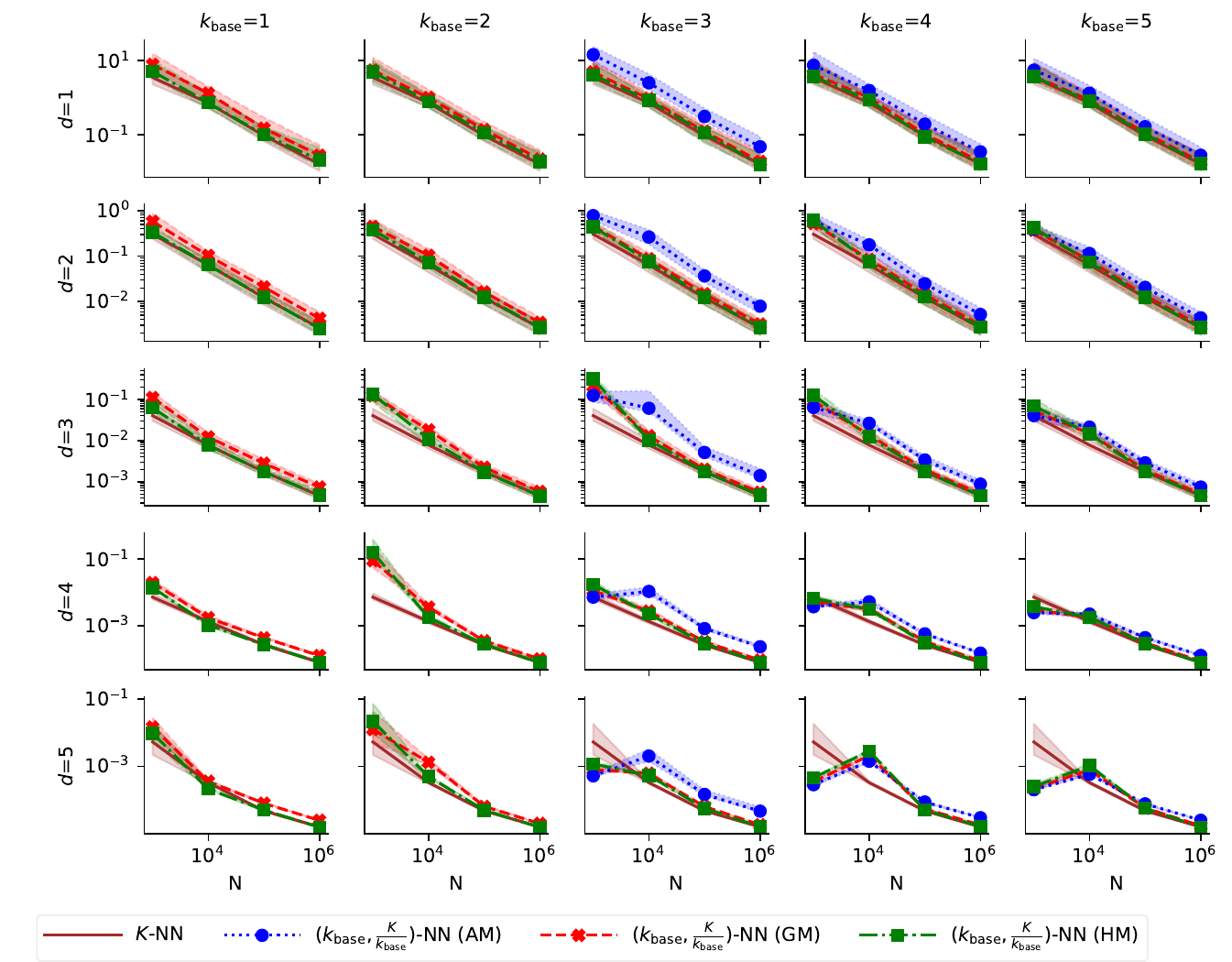}
\caption{Sample size vs. mean squared error plots for $K$-NN and $(k,K/k)$-NN density estimation rules for $k\in\{1,2,3,4,5\}$ over columns for mixture of Gaussian distributions of dimension $d\in \{1,2,3,4,5\}$. Here, $K$ was chosen as $\Th(N^{\frac{2\sigma}{d+2\sigma}})$, where $\sigma=2$ for mixture of Gaussians was plugged in.}
\label{fig:exp_density}
\end{figure}

\section{Concluding Remarks}
\label{sec:conclusion}
In this paper, we established the near statistical optimality of the $(k,M)$-NN rules when $k$ is fixed, which makes the sample-splitting-based NN rules more appealing for practical scenarios with large-scale data. 
For regression and classification, we also showed that the distance-selective rules enjoy exact minimax optimality and exhibited some level of performance boost in the experimental results.
In practice, our work suggests that $k$-NN search algorithms can be optimized only for $k=1$, without a concern of losing statistical efficiency. 
It is an open question whether the logarithmic factor is fundamental for the vanilla $(k,M)$-NN rules or can be removed by a tighter analysis.

As evidenced by both theoretical guarantees and empirical supports in this paper, we believe that the $(k,M)$-NN rules, especially for $k=1$, can be widely deployed in practical systems and deserve further study including an optimally weighted version of the classifier as studied in \citep{Duan--Qiao--Cheng2020}. 
For classification, it would be also interesting if the current divide-and-conquer framework can be modified to be universally consistent for any general metric space, whenever such a consistent rule exists~\citep{Hanneke--Kontorovich--Sabato--Weiss2020,Gyorfi--Weiss2021}.
Establishing a more general and stronger consistency result of the proposed $(k,M)$-NN density estimator is also an interesting research direction.

\acks{The authors appreciate insightful feedback from anonymous reviewers to improve earlier versions of the manuscript.
JR would like to thank Alankrita Bhatt, Sanjoy Dasgupta, Yung-Kyun Noh, and Geelon So for their discussion and comments on the manuscript.
This work was supported in part by the National Science Foundation under Grant CCF-1911238.}

\appendix

\addtocontents{toc}{\protect\StartAppendixEntries}
\listofatoc

\setcounter{figure}{0}
\renewcommand{\thefigure}{\thesection.\arabic{figure}}
\setcounter{table}{0}
\renewcommand{\thetable}{\thesection.\arabic{table}}

\section*{Overview of Appendix}
We provide the full proofs of the statements in the main text.
In Appendix~\ref{app:concentration_distributed_statistics}, we state and prove key technical lemmas for analyzing the distributed regression and regression rules, \ie $(k,M,L)$-NN rules and $(k,M)$-NN rules.

We first analyze the regression rules in Appendix~\ref{app:regression}, as they require less technicalities compared to the case of classification.
The analyses for classification rules then follow.
We remark that our analysis $(k,M,L)$-NN rules aim to handle any regime of $k$ (both fixed and growing), while we focus on the fixed-$k$ regime for the $(k,M)$-NN rules.
Appendix~\ref{app:density} presents the analyses for the proposed $(k,M)$-NN density estimation rules.

\section{Key Lemmas for Regression and Classification Rules}
\label{app:concentration_distributed_statistics}
We first restate a simple yet important observation on the $k$-nearest-neighbors by \citet{Chaudhuri--Dasgupta2014} that the $k$-nearest neighbors of $x$ lies in a ball of probability mass of $O(\frac{k}{n})$ centered at $x$, with high probability.
We define the \emph{probability radius} of mass $p$ centered at $x\in\Xc$ as the minimum possible radius of a closed ball containing probability mass at least $p$, that is,
\[
r_p(x)\defeq \inf\{r>0\suchthat \mu(\closedball(x,r))\ge p\}.
\]

\begin{lemma}[{\citealp[Lemma~8]{Chaudhuri--Dasgupta2014}}]
\label{lem:cd14_lemma8}
Pick any $x\in \Xc$, $0< p\le 1$, $0\le \g< 1$, and any positive integers $n$ and $k$ such that $1\le k\le (1-\g) np$. 
If $X_1,\ldots,X_n$ are drawn \iid from $\mu$, then
\begin{align*}
\P(
r_{k+1}(x;X_{1:n})
>r_p(x)) 
&\le e^{-\frac{\g^2}{2}np}
\le e^{-\frac{\g^2}{2}k}.
\end{align*}
\end{lemma}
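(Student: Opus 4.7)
The plan is to reduce the tail bound on the $(k+1)$-th nearest-neighbor distance to a standard one-sided Chernoff bound on a binomial count. The geometric observation is that the event $\{r_{k+1}(x;X_{1:n})>r_p(x)\}$ forces at most $k$ of the i.i.d.\ samples $X_1,\ldots,X_n$ to lie in the closed ball $\closedball(x,r_p(x))$: indeed, if the $(k+1)$-th nearest neighbor is strictly outside this ball, then at most the $k$ closer points can possibly lie inside it.

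Next I would invoke the definition of $r_p(x)$ together with right-continuity of $r\mapsto\mu(\closedball(x,r))$ (which follows from continuity from above of $\mu$ on the nested collection $\closedball(x,r_n)\downarrow\closedball(x,r)$) to conclude $q\defeq\mu(\closedball(x,r_p(x)))\ge p$. Therefore $S\defeq\sum_{i=1}^n\indi{X_i\in\closedball(x,r_p(x))}\sim\Binom(n,q)$ with $nq\ge np$, and the previous step gives
\[
\P(r_{k+1}(x;X_{1:n})>r_p(x))\le \P(S\le k).
\]

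Now I would apply the multiplicative Chernoff lower-tail bound $\P(\Binom(n,q)\le \xi' nq)\le e^{-nq(1-\xi')^2/2}$ with $\xi'\defeq k/(nq)$. The assumption $k\le \xi np\le \xi nq$ gives $\xi'\le \xi\le 1$, hence $(1-\xi')^2\ge(1-\xi)^2$ and $nq(1-\xi')^2\ge np(1-\xi)^2$, yielding the first inequality $e^{-np(1-\xi)^2/2}$. The second inequality is purely arithmetic: from $k\le \xi np$ we get $np\ge k/\xi$, so $np(1-\xi)^2/2\ge k(1-\xi)^2/(2\xi)$, and exponentiating the negatives reverses the order.

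The only non-routine step is the geometric translation in the first paragraph together with the right-continuity argument for $\mu(\closedball(x,r_p(x)))\ge p$; I expect this to be the main subtlety, since the distinction between open and closed balls in the definitions of $r_p$ and the NN distance matters for getting a clean containment. Everything else is a direct application of the standard binomial Chernoff bound and elementary monotonicity.
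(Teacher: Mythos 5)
Your proof is correct: the reduction of $\{r_{k+1}(x;X_{1:n})>r_p(x)\}$ to the event that at most $k$ samples fall in $\closedball(x,r_p(x))$, the right-continuity argument giving $\mu(\closedball(x,r_p(x)))\ge p$, and the multiplicative Chernoff lower-tail bound together constitute exactly the standard argument for this lemma. The paper itself supplies no proof—it imports the statement verbatim from Chaudhuri and Dasgupta (2014, Lemma~8)—and your argument coincides with the one given there, so there is nothing to compare beyond noting that you have correctly reconstructed the cited proof, including the open-versus-closed-ball subtlety.
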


\subsection{For \texorpdfstring{$(k,M,L)$}{(k,M,L)}-NN Rules}
We now state an analogous version (Lemma~\ref{lem:key_selective}) of the above lemma for our analysis of the $(k,M,L)$-NN rules. The following lemma quantifies that, with high probability (exponentially in $M$) over the split instances $\partitions_X=\{\Xv_1,\ldots,\Xv_M\}$, the the $k$-nearest neighbors of $x$ from the selected data splits based on the $(k+1)$-th-NN distances will likely lie within a small probability ball of mass $O(\frac{kM}{N})$ around the query point.

In the following, we need to invoke a more refined version of the multiplicative Chernoff bound, as the standard Chernoff bound cannot provide a guarantee for $k=1$.
In what follows, we let $\binomrv_{M,\a}\sim\Binom(M,\a)$ denote a binomial random variable with parameters $M$ and $\a\in[0,1]$.
\begin{lemma}[A refined multiplicative Chernoff bound]
\label{lem:refined_multiplicative_chernoff}
For any $q\in(0,1)$ and $\lambda\in(0,1)$,
the following holds:
\[
\Pr(\binomrv_{n,q}\le \lambda nq)
\le \Bigl(\frac{(1-q)^{1-\lambda q}}{q^{\lambda q}} \bigl(\frac{e}{\lambda}\bigr)^{\lambda q}\Bigr)^n.
\]
\end{lemma}
\begin{proof}
Consider
\begin{align*}
\Pr(\binomrv_{n,q}\le \lambda nq)
&=\sum_{i=0}^{\lfloor\lambda nq\rfloor} \binom{n}{i} q^i(1-q)^{n-i}\\
&\le (1-q)^{n(1-\lambda q)} \sum_{i=0}^{\lceil\lambda nq\rceil} \binom{n}{i}\\
&\le (1-q)^{n(1-\lambda q)} \Bigl(\frac{e}{\lambda q}\Bigr)^{n\lambda q}.\numberthis
\end{align*}
The last inequality follows from a well-known inequality $\sum_{i=0}^d \binom{n}{i}\le (\frac{en}{d})^d$ for $n\ge d$, which 
This concludes the proof.
\end{proof}

The following lemma is a counterpart of Lemma~\ref{lem:cd14_lemma8} for the $(k,M,L)$-NN rules.
\begin{lemma}
\label{lem:key_selective}
Pick any $x\in\Xc$, $0<p\le 1$, $0\le \g<1$, and any positive integers $n$ and $k$ such that $1\le k\le (1-\g)np$.
Further, for any integer $M\ge 1$, pick $\tau\in[0,1)$, so that $L=\lceil (1-\tau)(1-e^{-\frac{\g^2}{2}k}) M\rceil \ge 1$.
If the data splits $\Xb_1,\ldots,\Xb_M$ of size $n$ are drawn \iid from $\mu^{\otimes n}$, we have
\begin{align*}
\Pr\Bigl(\max_{j\in[L]} r_{k+1}(x;\Xb_{m_j})>r_p(x)\Bigr)
&\le \Bigl(q_{k;\g}^{1-\taubar\qbar_{k;\g}}\bigl(\frac{e}{\taubar\qbar_{k;\g}}\bigr)^{\taubar\qbar_{k;\g}} \Bigr)^{M}  
\defeq P_e(k,M;\g,\tau).
\end{align*}
Here, we define $q_{k;\g}\defeq e^{-\frac{\g^2}{2}k}$ and we use a shorthand notation $\bar{x}\defeq 1-x$ for $x\in[0,1]$.

In particular, either $k$ is any fixed integer $k\ge 1$ or growing, the bound is $e^{-O(kM)}$. More precisely:
\begin{enumerate}
\item For any $k\ge 1$ and $\g\in(0,1)$, there exists $(k,\g)\mapsto \tau_{k;\g}^{\inf} \in (0,1)$ such that for any $\tau\in(\tau_{k;\g}^{\inf}, 1)$, we have
\[
P_e(k,M;\g,\tau)
\le e^{-\phi_{k;\g,\tau} M}.
\]
Moreover, for any $\gamma\in(0,1)$, $\tau_{k;\gamma}^{\inf}$ monotonically decreases as $k$ increases.
\item For any $\g\in(0,1)$ and $\tau\in(0,1)$, define $k_{\g,\tau}^{\min}\defeq \frac{2-\tau^2}{\g^2\tau}$.
For any $c>1$, if $k\ge c k_{\g,\tau}^{\min}$, we have
\[
P_e(k,M;\g,\tau)\le e^{-\half(1-\frac{1}{c}) kM}.
\] 
\end{enumerate}
\end{lemma}

\begin{proof}
For each data split indexed by $m\in[M]$, we define a \emph{bad} event 
\[
E_m=\{r_{k+1}(x;\Xb_m)> r_p(x)\}.
\]
Observe that $E_m$ occurs if any only if the closed ball of probability mass $p$ contains less than $k$ points from $\Xb_m$. 
By Lemma~\ref{lem:cd14_lemma8},
the probability of the bad event $E_m$ is upper bounded by $e^{-\frac{\g^2}{2}k}$.
Now, since the data splits are independent, $(1(E_m))_{m=1}^M$ is a sequence of independent Bernoulli random variables with parameter $\Pr(E_1)\le \tau$. 
Hence, we have 
\begin{align*}
\Pr\Bigl(\max_{j\in[L]} r_{k+1}(x;\Xb_{m_j})>r_p(x)\Bigr)
&\le \Pr\Bigl(\sum_{m=1}^{M}1(E_m)> M-L\Bigr)
\\
&= \Pr\Bigl(\sum_{m=1}^{M}1(E_m^c)< L\Bigr)\\
&\le \Pr(\binomrv_{M,1-q} < (1-\tau)(1-q)M)\numberthis\label{eq:intrmdt_binom_bound}
\end{align*}
for $q=e^{-\frac{\g^2}{2}k}$, where $\binomrv_{M,\a}\sim\Binom(M,\a)$ denotes a binomial random variable with parameters $M$ and $\a\in[0,1]$.
We now apply Lemma~\ref{lem:refined_multiplicative_chernoff} for $(n,q,\lambda)\gets (M,1-e^{-\frac{\g^2}{2}k},1-\tau)$, which concludes the desired bound.

To prove the second part of the statement, we need to show that there exists a proper choice of $(\g,\tau)\in(0,1)\times(0,1)$, so that the error probability bound decays exponentially fast as $e^{-O(kM)}$ as $N=nM\to\infty$, regardless of whether $k=O(1)$ or $k\to\infty$ as $N\to\infty$.
We consider the two regimes separately.
\begin{enumerate}
\item If $k=O(1)$ as $N\to\infty$, we need to find a pair of $(\g,\tau)$ such that 
\[
P_e(k,M;\g,\tau)^{\frac{1}{M}}
=(1-\qbar_{k;\g})^{1-\taubar\qbar_{k;\g}}\bigl(\frac{e}{\taubar\qbar_{k;\g}}\bigr)^{\taubar\qbar_{k;\g}} <1,
\]
which is equivalent to
\[
M\log P_e(k,M;\gamma,\tau)
=f(\taubar\qbar_{k;\g}; \log(1-\qbar_{k;\g}))<0, \numberthis \label{eq:good_gamma_tau}
\]
where $f(x;a)\defeq a(1-x)+x(1-\log x)$ for $x\in(0,1]$.
Note that since $x\mapsto f(x;a)$ is strictly concave and its maximum is attained when $x=1$ as $f(1;a)=1$, it monotonically increases over the domain.
Further, since $f(0;a)\defeq \lim_{x\to 0} f(x;a)= a$, for $a=\log(1-\qbar_{k;\g})<0$, by the intermediate value theorem, there must exist a unique root $x_{\max}(a)\in(0,1)$ of the equation $f(x;a)=0$, which further satisfies that $f(x;a)<0$ for $x\in(0,x_{\max}(a))$.
This implies that for any $k\ge 1$ and $\g>0$, if
\[
\tau> 1-\frac{x_{\max}(\log(1-\qbar_{k;\g}))}{\qbar_{k;\g}},
\]
then $f(\taubar\qbar_{k;\g}; \log(1-\qbar_{k;\g}))=\frac{1}{M}\log P_e(k,M;\g,\tau) <0$. 
This implies that for any $k\ge 1$, $\g\in(0,1)$, and $\tau\in(1-\frac{x_{\max}(\log(1-\qbar_{k;\g}))}{\qbar_{k;\g}}, 1)$, we have 
\[
P_e(k,M;\g,\tau)\le e^{-\phi_{k;\g,\tau} M},
\]
with $\phi_{k;\g,\tau}\defeq -f(\taubar\qbar_{k;\g}; \log(1-\qbar_{k;\g}))>0$.

\item If $k\to\infty$ as $N\to\infty$, we can consider the following upper bound on the rate
\begin{align*}
P_e(k,M;\g,\tau) 
&\le 
\Bigl((1-\qbar_{k;\g})^{1-\taubar\qbar_{k;\g}}
e^{1-\half(1-\taubar\qbar_{k;\g})^2} \Bigr)^{M} \\
&= \Bigl(e^{-\frac{\g^2}{2}k(1-\taubar\qbar_{k;\g}) + 1 -\half(1-\taubar\qbar_{k;\g})^2}\Bigr)^{M}\\
&\le \bigl(e^{-\frac{\g^2}{2}k\tau + 1 -\frac{\tau^2}{2}}\bigr)^{M}.
\end{align*}
where the first inequality immediately follows from $(\frac{e}{x})^{x}\le e^{1-\frac{(1-x)^2}{2}}$ for $x\in (0,1]$.
In this case, for a given $\g$ and $\tau$, for any $k\ge K\ge \frac{2-\tau^2}{\g^2\tau}$, we have
\[
P_e(k,M;\g,\tau)\le e^{-\half\{\frac{1}{K}(\tau^2-2) + \g^2\tau\}kM} = e^{-O(kM)}.%
\] 
\end{enumerate}
This concludes the proof.
\end{proof}

A reader may wonder what values of $(\gamma,\tau)$ guarantee the exponential convergence $e^{-O(M)}$ for a fixed $k\ge 1$.
Note that since $L\defeq \lceil (1-\tau)(1-e^{-\frac{\g^2}{2}k}M\rceil$, larger $\gamma$ and/or smaller $\tau$ corresponds to less truncation (or equivalently, larger $L$), and vice versa.
To answer to the question, in Fig.~\ref{fig:good_gamma_tau}, we visualize the function $(\gamma,\tau)\mapsto M\log P_e(k,M;\gamma,\tau)=f(\taubar\qbar_{k;\g}; \log(1-\qbar_{k;\g}))$ in \eqref{eq:good_gamma_tau} for $k\in \{1,3,\ldots,13\}$, as well as the zero-level sets.
Note that for each $\gamma=1-\frac{k}{np}\in(0,1)$, the range of \emph{good} $\tau$'s becomes wider, since intuitively smaller $\tau$'s (less aggressive truncation) are allowed if using a larger base $k$.

\begin{figure*}[htb]
\centering

\subfloat[Visualization of the function $(\gamma,\tau)\mapsto M\log P_e(k,M;\gamma,\tau)=f(\taubar\qbar_{k;\g}; \log(1-\qbar_{k;\g}))$ in \eqref{eq:good_gamma_tau} for $k\in \{1,3,\ldots,13\}$.
The pairs resulting in the negative values (blue region) guarantee the exponential convergence. 
]{\includegraphics[width=\linewidth]{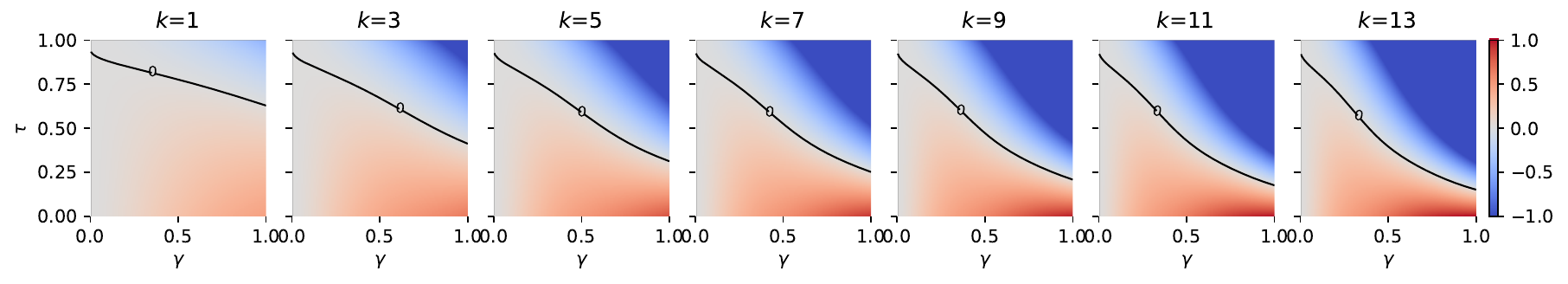}}

\subfloat[Visualization of the selection factor $(1-\tau)(1-e^{-\frac{\gamma^2}{2}k})$, which guarantees the exponential convergence for each $k\in \{1,3,\ldots,13\}$.
]{\includegraphics[width=\linewidth]{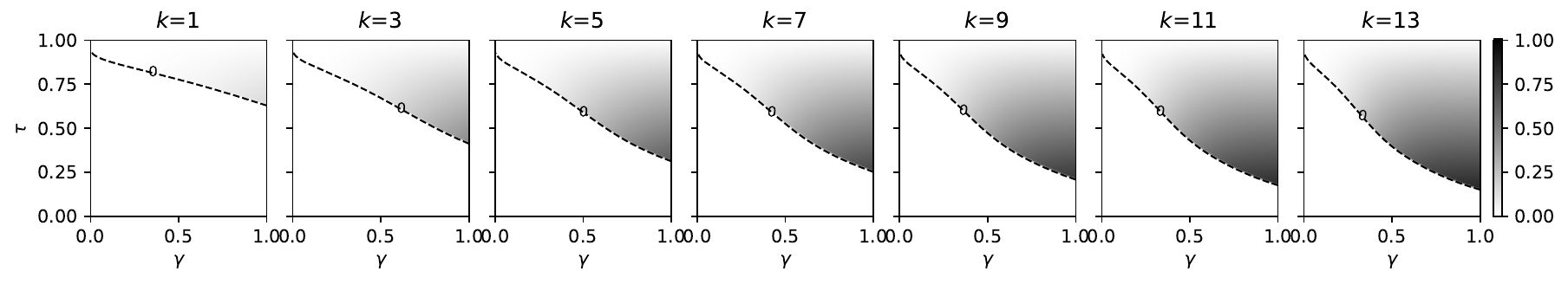}}

\caption{Visualization of (a) the allowed pairs of $(\gamma,\tau)$ for each fixed $k\ge 1$ and (b) the corresponding selection factors. 
The maximum values of the selection factors for each $k$ in (b) are summarized in Fig.~\ref{fig:maximum_allowed_factor}.}
\label{fig:good_gamma_tau}
\end{figure*}

\subsection{For \texorpdfstring{$(k,M)$}{(k,M)}-NN Rules}

In the following, the key idea for analyzing the $(k,M)$-NN rules
is to use a distance-selective $(k,M,L)$-NN rule with $L=\lceil(1-\tau)^2M\rceil$ as a proxy to the $(k,M)$-NN rule for $\tau$ sufficiently small.
For $(k,M)$-NN rules, we are interested in when $k=O(1)$ is fixed.
We need the following variant of Lemma~\ref{lem:key_selective}.
While we will use $q=\tau$ in our analysis for the $(k,M)$-NN rules, we keep $\tau$ and $q$ in the following statement to clarify the role of each variable in its proof.

\begin{lemma}\label{lem:key_original}
For any positive integer $k\ge 1$, pick $\tau\in(0,1]$, $q\in(0,1]$, and set $L=\lceil (1-\tau)(1-q) M\rceil$.
If the data splits $\Xb_1,\ldots,\Xb_M$ of size $n$ are independent, we have
\begin{align*}
\Pr\Bigl(\max_{j\in[L]} r_{k+1}(x;\Xb_{m_j})>r_p(x)\Bigr)
\le e^{-\frac{(1-q)\tau^2}{2}M}
\end{align*}
for $n\ge k+\log\frac{1}{q}+\sqrt{2k\log\frac{1}{q}+(\log\frac{1}{q})^2}$ and
$p\in[\frac{1}{n}(k+\log\frac{1}{q}+\sqrt{2k\log\frac{1}{q}+(\log\frac{1}{q})^2}),1]\subset (0,1]$.
\end{lemma}

\begin{proof}
Define
\[
\g =\frac{\log\frac{1}{q}+\sqrt{2k\log\frac{1}{q}+(\log\frac{1}{q})^2}}{k+\log\frac{1}{q}+\sqrt{2k\log\frac{1}{q}+(\log\frac{1}{q})^2}}
\numberthis\label{eq:def_gamma}
\]
so that we can write $p\ge\frac{1}{1-\g}\frac{k}{n}=\frac{1}{1-\g}\frac{kM}{ N}$ and $e^{-\frac{\g^2}{2(1-\g)}k}=q$.
Note that $\g\in[0,1)$ for any $k\ge 1$ and $\tau\in(0,1]$.

For each data split indexed by $m\in[M]$, we define a \emph{bad} event 
\[
E_m=\{r_{k+1}(x;\Xb_m)> r_p(x)\}.
\]
Observe that $E_m$ occurs if any only if the closed ball of probability mass $p$ contains less than $k$ points from $\Xb_m$. 
By Lemma~\ref{lem:cd14_lemma8},
the probability of the bad event $E_m$ is upper bounded by $e^{-\frac{\g^2}{2(1-\g)}k}=q$.

Now, since the data splits are independent, $(1(E_m))_{m=1}^M$ is a sequence of independent Bernoulli random variables with parameter $\Pr(E_1)\le \tau$. 
Hence, we have 
\begin{align*}
\Pr\Bigl(\max_{j\in[L]} r_{k+1}(x;\Xb_{m_j})>r_p(x)\Bigr)
&\le \Pr\Bigl(\sum_{m=1}^{M}1(E_m)> M-L\Bigr)
\\
&= \Pr\Bigl(\sum_{m=1}^{M}1(E_m^c)< L\Bigr)\\
&\le \Pr(\binomrv_{M,1-q} < (1-\tau)(1-q)M),\numberthis\label{eq:intrmdt_binom_bound_original}
\end{align*}
where $\binomrv_{M,q}\sim\Binom(M,q)$ denotes a binomial random variable with parameters $M$ and $q\in[0,1]$.
Another application of the multiplicative Chernoff bound to the right-hand side
concludes the desired bound.
\end{proof}

\section{Analyzing Regression Rules}
\label{app:regression}

In this section, we first analyze the $(k,M,L)$-NN regression rule and then the $(k,M)$-NN rule.
The analysis of the $(k,M)$-NN rule closely resembles that of the $(k,M,L)$-NN rule as we use the $(k,M,L)$-NN rule as a proxy to the $(k,M)$-NN rule, and we show how to carefully control the approximation error.

\subsection{Analysis of the \texorpdfstring{$(k,M,L)$}{(k,M,L)}-NN Regression Rule}

For the $(k,M,L)$-NN regression rule, we claim the following convergence guarantees. Note that Corollary~\ref{cor:regression_selective_simple} is an immediate corollary of the part (a) of this theorem.
\begin{theorem}
\label{thm:regression_selective}
Suppose that Assumptions~\ref{assum:doubling_homogeneous} and \ref{assum:holder} hold.
\begin{enumerate}[label=(\alph*)]
\item For any $1\le M\le N$, pick any $0< \g\le 1-\frac{M}{N}$, any integer $1\le k\le (1-\g)\frac{N}{M}$, and any $\tau\in(0,1)$, such that $L\defeq \lceil(1-\tau)(1-e^{-\frac{\g^2}{2}k}) M\rceil\ge 1$.
If Assumption~\ref{assum:boundedness} holds and the support of $\mu$ is bounded,
\begin{align*}
\E_{\partitions}\|\selectreg{k}{M}{L}-\eta\|_2
&=O\Bigl(\frac{1}{(1-\tau)(1-e^{-\frac{\g^2}{2}k}) kM} + \Bigl(\frac{kM}{(1-\g)N}\Bigr)^{\frac{2\aH}{d}}
+ P_e(k,M;\g,\tau)\Bigr).
\end{align*}

\item For any $1\le M\le N$, pick any integer $1\le k\le \frac{N}{M}$. 
Pick any $\sigma>0$ so that $L\defeq \lceil(1-\tau)(1-e^{-\frac{\sigma^2}{2}k}) M\rceil\ge 1$.
If Assumption~\ref{assum:stronger} holds, for any $0<\d<1$, 
we have
\begin{align*}
\|\selectreg{k}{M}{L}(\cdot;\partitions)-\eta\|_{\infty}
&=O\Bigl(\Bigl(\frac{M}{N}\Bigl(k\vee \Bigl(\log \frac{N}{M} + \sigma^2 k\Bigr)\Bigr)\Bigr)^{\frac{\aH}{d}}
\sqrt{\frac{1}{kM(1-\tau)(1-e^{-\frac{\sigma^2}{2}k})}\log\frac{N}{\d}}
\Bigr)
\end{align*}
with probability at least $1-\d-P_e(k,M;\sigma,\tau)$ over $\partitions$.
\end{enumerate}
Here, the hidden constants in the big-O notation are independent of the ambient dimension $D$.
\end{theorem}

\subsubsection{Proof of Theorem~\ref{thm:regression_selective}(a)}
Let $\partitions_X\defeq\{\Xb_m\}_{m=1}^M$ denote the set of splits of $\Xb$.
We let $V\defeq \sup_{x\in\Xc} v(x)<\infty$ and $H\defeq \sup_{x\in\Xc} |\eta(x)|<\infty$. 
Since the support of $\mu$ is bounded, we let $R\defeq \diam(\supp(\mu))<\infty$.

\paragraph*{Step 1. Error decomposition}
Recall that we wish to bound
\begin{align*}
\E_{\partitions}\|\selectreg{k}{M}{L}(\cdot;\partitions)-\eta\|_2 
&= \E_{\partitions}\sqrt{\E_X[(\selectreg{k}{M}{L}(X;\partitions)-\eta(X))^2]}\\
&\le \sqrt{\E_\partitions\E_X[(\selectreg{k}{M}{L}(X;\partitions)-\eta(X))^2]},
\end{align*}
where the upper bound follows by Jensen's inequality.
We will consider $L=\lceil(1-\tau)(1-e^{-\frac{\g^2}{2}k}) M\rceil$, where $\tau$ is to be determined at the end of the proof.
Pick any $x\in\Xc$.
We denote the conditional expectation of the $(k,M,L)$-NN regression estimate $\selectreg{k}{M}{L}(x;\partitions)$ by
\begin{align*}
\selectregexp{k}{M}{L}(x;\partitions_X)
&\defeq\E_{\Yb|\partitions_X}[\selectreg{k}{M}{L}(x;\partitions)]
\\&
=\frac{1}{kL}\sum_{j=1}^{L} \sum_{i=1}^k \eta(X_{(i)}(x;\Xb_{m_j})),
\end{align*}
where the expectation is over $Y$-values $\Yb$ given the data splits $\partitions_X$.
We decompose the squared error $(\selectreg{k}{M}{L}(x;\partitions)-\eta(x))^2$ as 
\begin{align*}
(\selectreg{k}{M}{L}(x;\partitions)-\eta(x))^2
&\le 2\Bigl\{(\selectreg{k}{M}{L}(x;\partitions)- \selectregexp{k}{M}{L}(x;\partitions_X))^2 
\\&\qquad 
+(\selectregexp{k}{M}{L}(x;\partitions_X)-\eta(x))^2\Bigr\},
\end{align*}
where we use the inequality $(a+b)^2\le 2(a^2+b^2)$.
Taking expectation over the $Y$ values given the data splits $\partitions_X$, we have
\begin{align*}
\E_{\Yb|\partitions_X}[(\selectreg{k}{M}{L}(x;\partitions)-\eta(x))^2]\numberthis\label{eq:error_decomposition}
&\le 2\Bigl\{\underbrace{\Var_{\Yb|\partitions_X}(\selectreg{k}{M}{L}(x;\partitions))}_{\rm(variance)}
\\&\qquad
+\underbrace{(\selectregexp{k}{M}{L}(x;\partitions_X)-\eta(x))^2}_{\rm(bias)}\Bigr\}.
\end{align*}
We now bound the three terms separately in the next steps.

\paragraph*{Step 2. Variance term}
Consider
\begin{align*}
\Var_{\Yb|\partitions_X}(\selectreg{k}{M}{L}(x;\partitions))
&=\E_{\Yb|\partitions_X}[(\selectreg{k}{M}{L}(x;\partitions)-\selectregexp{k}{M}{L}(x;\partitions_X))^2]\\
&=\E_{\Yb|\partitions_X}\Bigl[\Bigl(\frac{1}{kL}\sum_{i=1}^k \sum_{j=1}^L (Y_{(i)}(x;\dataset_{m_j}) - \E[Y_{(i)}(x;\dataset_{m_j})|\partitions_X])\Bigr)^2\Bigr]\\
&\stackrel{(a)}{=}\frac{1}{(kL)^2} \sum_{i=1}^k\sum_{j=1}^{L}\Var_{\Yb|\partitions_X}(Y_{(i)}(x;\dataset_{m_j}))\\
&= \frac{1}{(kL)^2} \sum_{i=1}^k \sum_{j=1}^L v(X_{(i)}(x;\Xb_{m_j}))
\stackrel{(b)}{\le} \frac{V}{kL}.
\numberthis\label{eq:l2_bias_bound_A}
\end{align*}
Here, (a) follows by the independence of $Y_i$'s conditioned on the splits $\partitions_X$ and (b) follows from the assumption $v(x)\le V$ for all $x\in\Xc$.

\paragraph*{Step 3. Bias term}
It only remains to bound the term $(C)$, which is the bias of the $(k,M,L)$-NN regression estimate $\selectreg{k}{M}{L}(x;\partitions)$.
Since $\eta$ is $(\aH,A)$-H\"older continuous, it immediately follows that
\begin{align*}
|\selectregexp{k}{M}{L}(x;\partitions_X)-\eta(x)|
&\le \frac{1}{kL}\sum_{i=1}^k\sum_{j=1}^{L} |\eta(X_{(i)}(x;\Xb_{m_j}))-\eta(x)|\\
&\le A\max_{j\in[L]} r_{k+1}^{\aH}(x;\Xb_{m_j}).
\end{align*}
Now, for any $p\in(0,1)$, we observe that by the homogeneity of $\mu$, we have
\[
C_d\Bigl(\frac{r_p(x)}{2}\Bigr)^d \le \mu\Bigl(\openball\Bigl(x,\frac{r_p(x)}{2}\Bigr)\Bigr)<p,
\]
which implies that $r_p(x)<(\frac{2^dp}{C_d})^{1/d}$.
For $p=\frac{1}{1-\g}\frac{kM}{N}\in(0,1)$,
by Lemma~\ref{lem:key_selective} and the boundedness of the support, \ie $\diam(\supp(\mu))\le R$, we then have
\begin{align*}
\E_{\partitions_X}[(\selectregexp{k}{M}{L}(x;\partitions_X)-\eta(x))^2]
&\le A^2\E_{\partitions_X}\Bigl[\max_{j\in[L]} r_{k+1}^{2\aH}(x;\Xb_{m_j})\Bigr]\\
&\le A^2 \Bigl\{r_p^{2\aH}(x) +
R^{2\aH}\P\Bigl(\max_{j\in[L]} r_{k+1}(x;\Xb_{m_j})>r_p(x)\Bigr)\Bigr\}
\\
&\le A^2\Bigl(\Bigl(\frac{2^d p}{C_d}\Bigr)^{\frac{2\aH}{d}} + R^{2\aH}P_e(k,M;\g,\tau)\Bigr).
\numberthis\label{eq:l2_bias_bound_C}
\end{align*}

\paragraph*{Step 4}
Plugging in \eqref{eq:l2_bias_bound_A} and \eqref{eq:l2_bias_bound_C} to the error decomposition~\eqref{eq:error_decomposition} leads to
\begin{align*}
\E[(\splitreg{k}{M}(x;\partitions)-\eta(x))^2]
&\le 2\Bigl\{
\underbrace{\frac{V}{kL}}_{\rm(variance)}
+ \underbrace{A^2 \Bigl(\frac{2^d p}{C_d}\Bigr)^{\frac{2\aH}{d}} 
+ A^2 R^{2\aH} P_e(k,M;\g,\tau)}_{\rm(bias)}\Bigr\}
\numberthis\label{eq:l2_error_regression_selective}
\\
&= O\Bigl(\frac{1}{(1-\tau)(1-e^{-\frac{\g^2}{2}k}) kM} + \Bigl(\frac{kM}{(1-\g)N}\Bigr)^{\frac{2\aH}{d}}
+ P_e(k,M;\g,\tau)\Bigr).
\end{align*}
This proves the desired rate.
\qed

\subsubsection{Proof of Theorem~\ref{thm:regression_selective}(b)}

This analysis adopts the proof technique of \citep[Proposition~1]{Xue--Kpotufe2018} and will invoke the following lemma therein.

\begin{lemma}[%
{\citep[Lemma~1]{Xue--Kpotufe2018}}]
\label{lem:unif_bound_knn_distances}
Assume that $\mu$ is a $(C_d,d)$-homogeneous measure and the collection of all closed balls in $\Xc$ has finite VC dimension $\Vc$.
Then, with probability at least $1-q$ over the sample $\Xb$ of size $n$, for any $k\in[n]$, we have
\[
\sup_{x\in\Xc} r_k(x;\Xb) \le \Bigl(\frac{3}{C_d n}\Bigl(k\vee\Bigl(\Vc\log 2n + \log\frac{8}{q}\Bigr)\Bigr)\Bigr)^{\frac{1}{d}}.
\]
\end{lemma}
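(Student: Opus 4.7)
The plan is to combine a uniform (relative) Vapnik--Chervonenkis concentration inequality over the class of closed balls with the $(C_d,d)$-homogeneity of $\mu$. The key observation is the dual characterization $r_k(x;\Xb)\le r$ if and only if the empirical measure satisfies $\mu_n(\closedball(x,r))\ge k/n$, where $\mu_n=\frac{1}{n}\sum_{i=1}^n \delta_{X_i}$. So the task reduces to finding the smallest radius $r^*$ (independent of $x$) for which $\mu_n(\closedball(x,r^*))\ge k/n$ holds simultaneously for every $x\in\Xc$.

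First, I would invoke the standard relative VC inequality for the class $\mathcal{B}$ of closed balls in $\Xc$, which has VC dimension $\Vc$ by assumption. Concretely, there is a universal constant $c_0$ such that with probability at least $1-\tau$ over $\Xb$, every $B\in\mathcal{B}$ satisfies
\[
\mu(B)-\mu_n(B)\le \sqrt{c_0\,\mu(B)\,\beta_n}+c_0\beta_n,\qquad \beta_n\defeq \frac{\Vc\log(2n)+\log(8/\tau)}{n}.
\]
This can be derived from Vapnik's relative deviation bound or an analogous Talagrand-type result; the constant $c_0$ can be tuned (together with the leading factor $3$ in the statement) to match the exact form displayed.

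Second, I would set $r^*=\bigl(\tfrac{3}{C_d n}(k\vee n\beta_n)\bigr)^{1/d}$, so that by homogeneity
\[
\mu(\closedball(x,r^*))\ge \min\{C_d (r^*)^d,1\}=\min\Bigl\{\tfrac{3}{n}(k\vee n\beta_n),1\Bigr\}.
\]
Plugging this lower bound into the relative VC inequality and using $k\vee n\beta_n\ge n\beta_n$, the deviation term $\sqrt{c_0\mu(B)\beta_n}+c_0\beta_n$ is dominated by $\mu(B)-k/n$ for appropriate constants, yielding $\mu_n(\closedball(x,r^*))\ge k/n$ uniformly in $x$. Taking the supremum then delivers the claim.

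The main obstacle is bookkeeping the constants so that the right-hand side is exactly $\bigl(\tfrac{3}{C_d n}(k\vee(\Vc\log 2n+\log(8/\tau)))\bigr)^{1/d}$ rather than some constant multiple of it; this amounts to selecting the cleanest form of the VC inequality and then verifying the algebraic inequality that converts a lower bound on $\mu(B)$ into the desired lower bound on $\mu_n(B)$. A minor subtlety is that the supremum is over $x\in\Xc$ rather than $x\in\supp(\mu)$, but homogeneity gives the required mass bound exactly on $\supp(\mu)$; since the lemma is always applied to query points drawn from $\mu$, this suffices for downstream use.
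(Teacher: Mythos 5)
The paper does not prove this lemma---it is imported verbatim as Lemma~1 of Xue and Kpotufe (2018)---so there is no internal proof to compare against. Your reconstruction is the standard (and essentially the original) argument: the equivalence $r_k(x;\Xb)\le r \iff \mu_n(\closedball(x,r))\ge k/n$, a uniform relative VC deviation bound over the class of balls, and the homogeneity lower bound $\mu(\closedball(x,r))\ge \min\{C_d r^d,1\}$ to convert a radius into a mass. The only substantive caveat is the one you flag: with the generic form $\mu(B)-\mu_n(B)\le\sqrt{c_0\mu(B)\beta_n}+c_0\beta_n$ the factor $3$ does not survive for arbitrary $c_0$ (e.g.\ $c_0=4$ fails at $\mu(B)=3\max\{k/n,\beta_n\}$); one should instead use the Chaudhuri--Dasgupta form, namely that with probability $1-\tau$ every ball with $\mu(B)\ge \frac{k}{n}+\frac{1}{n}\sqrt{k(\Vc\log 2n+\log\frac{8}{\tau})}$ satisfies $\mu_n(B)\ge k/n$, whence $k+\sqrt{kC}\le 2(k\vee C)\le 3(k\vee C)$ with $C\defeq\Vc\log 2n+\log\frac{8}{\tau}$ gives exactly the stated constant (and explains the $8/\tau$). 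Your remark about $\sup_{x\in\Xc}$ versus $x\in\supp(\mu)$ is apt but harmless for how the lemma is used downstream.
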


To bound the sup-norm
$\|\selectreg{k}{M}{L}-\eta\|_\infty=\sup_{x\in\Xc} |\selectreg{k}{M}{L}(x;\partitions)-\eta(x)|$,
we consider the following bias-variance decomposition
\begin{align*}
|\selectreg{k}{M}{L}(x;\partitions)-\eta(x)|
&\le \underbrace{|\selectregexp{k}{M}{L}(x;\partitions_X)-\eta(x)|}_{\rm(bias)} + \underbrace{|\selectreg{k}{M}{L}(x;\partitions)-\selectregexp{k}{M}{L}(x;\partitions_X)|}_{\rm(variance)},
\numberthis\label{eq:approximation}
\end{align*}
where we define the conditional expectation $\selectregexp{k}{M}{L}(x;\partitions_X)\defeq \E[\selectreg{k}{M}{L}(x;\partitions)|\partitions_X]$ as in the proof of Theorem~\ref{thm:regression}(a).

\paragraph*{Step 1. Bias term}
The following variant of Lemma~\ref{lem:key_selective}, can be readily shown by invoking Lemma~\ref{lem:unif_bound_knn_distances} with $n\gets \frac{N}{M}$ and following the same line of the proof of Lemma~\ref{lem:key_selective}.

\begin{lemma}\label{lem:key_variant}
Assume that $\mu$ is a $(C_d,d)$-homogeneous measure and the collection of all closed balls in $\Xc$ has finite VC dimension $\Vc$.
Pick any any positive integers $n$, $k$, and $M$.
Further, pick any $\sigma>0$ and $\tau\in[0,1)$, so that $L=\lceil (1-\tau)(1-e^{-\frac{\sigma^2}{2}k}) M\rceil\ge 1$.
If the data splits $\Xb_1,\ldots,\Xb_M$ of size $n$ are drawn \iid from $\mu^{\otimes n}$, we have
\begin{align*}
\Pr\Bigl(\max_{j\in[L]} \sup_{x\in\Xc} r_{k}(x;\Xb_{m_j})
>h_{n,k}(e^{-\frac{\sigma^2}{2}k})\Bigr)
&\le P_e(k,M;\sigma,\tau),
\end{align*}
where we define
\[
h_{n,k}(a)
\defeq \Bigl(\frac{3}{C_d n} \Bigl(k\vee\Bigl(\Vc\log 2n + \log \frac{8}{a}\Bigr)\Bigr)\Bigr)^{\frac{1}{d}}.
\numberthis\label{eq:def_threshold}
\]
\end{lemma}
\begin{proof}[Proof of Lemma~\ref{lem:key_variant}]
Set $q=e^{-\frac{\sigma^2}{2}k}$ for some $\sigma>0$.
For each data split indexed by $m\in[M]$, we define a \emph{bad} event 
\[
F_m=\Bigl\{\sup_{x\in\Xc} r_{k}(x;\Xb_m)> h_{n,k}(q)\Bigr\}.
\]
By Lemma~\ref{lem:unif_bound_knn_distances},
the probability of the bad event $F_m$ is at most $q$.
Now, since the data splits are independent, $(1(F_m))_{m=1}^M$ is a sequence of independent Bernoulli random variables with parameter $\Pr(F_1)\le q$. 
Hence, we have 
\begin{align*}
\Pr\Bigl(\max_{j\in[L]} \sup_{x\in\Xc} r_{k}(x;\Xb_{m_j})>h_{n,k}(q)\Bigr)
&\le \Pr\Bigl(\sum_{m=1}^{M}1(F_m)> M-L\Bigr)
\\
&= \Pr\Bigl(\sum_{m=1}^{M}1(F_m^c)< L\Bigr)\\
&\le \Pr(\binomrv_{M,1-q} < (1-\tau)(1-q)M).
\end{align*}
Recall that $q=e^{-\frac{\sigma^2}{2}k}$.
We now apply Lemma~\ref{lem:refined_multiplicative_chernoff} for $(n,q,\lambda)\gets (M,1-q,1-\tau)$, we have
\begin{align*}
\Pr\Bigl(\max_{j\in[L]} \sup_{x\in\Xc} r_{k}(x;\Xb_{m_j})>h_{n,k}(q)\Bigr)
&\le P_e(k,M;\sigma,\tau),
\end{align*}
which concludes the proof.
\end{proof}

Then, Lemma~\ref{lem:key_variant} together with the H\"older continuity of $\eta$ implies that with probability at least $P_e(k,M;\sigma,\tau)$ over the data splits $\partitions_X$, we have
\begin{align*}
\sup_{x\in\Xc}
|\selectregexp{k}{M}{L}(x;\partitions_X)-\eta(x)|
&\le A\sup_{x\in\Xc}\max_{j\in[L]} r_k^{\aH}(x;\Xb_{m_j})\\
&\le A\Bigl(\frac{3M}{C_d N} \Bigl(k\vee\Bigl(\Vc\log \frac{2N}{M} + \frac{\sigma^2}{2}k + \log 8\Bigr)\Bigr)\Bigr)^{\frac{\aH}{d}}.
\numberthis\label{eq:bias_bound_whp}
\end{align*}

\paragraph*{Step 2. Variance term}
For any fixed $x\in\Xc$ and split instances $\partitions_X=\{\Xb_m\}_{m=1}^M$, Hoeffding's inequality guarantees that with probability at least $1-\d_o$ over the labels $\{\Yb_m\}_{m=1}^M$, we have 
\begin{align}\label{eq:hoeffding_fixed_x}
|\selectreg{k}{M}{L}(x;\partitions)-\selectregexp{k}{M}{L}(x;\partitions_X)|\le \sqrt{\frac{l_Y^2}{2kL}\log\frac{2}{\d_o}}.
\end{align}
Now, observe that given $\partitions_X$, the left hand side is a function of $x$ only via its nearest neighbors from $\Xb$, and thus only depends on a closed ball centered at $x$.
The finite VC dimensionality assumption then implies that if we vary $x\in\Xc$, there are at most $(\frac{eN}{\Vc})^{\Vc}$ different such inequalities~\eqref{eq:hoeffding_fixed_x}.
Hence, letting $\d=\d_o(\frac{eN}{\Vc})^{\Vc}$ and applying union bound, we have, with probability at least $1-\d$ over $\{\Yb_m\}_{m=1}^M$,
\begin{align}\label{eq:variance_bound_whp}
\sup_{x\in\Xc}|\selectreg{k}{M}{L}(x;\partitions)-\selectregexp{k}{M}{L}(x;\partitions_X)|
\le \sqrt{\frac{\Vc l_Y^2}{kL}\log\frac{N}{\d}}.
\end{align}
Since this inequality holds independent of $\partitions_X$, it also holds with probability at least $1-\d$ over the split data $\partitions$.

\paragraph*{Step 3}
Continuing from \eqref{eq:approximation} and combining the bias bound \eqref{eq:bias_bound_whp} and variance bound~\eqref{eq:variance_bound_whp} by union bound, we have with probability at least $1-\d-P_e(k,M;\sigma,\tau)$, 
\begin{align*}
\|\selectreg{k}{M}{L}-\eta\|_\infty
&\le \underbrace{A\Bigl(\frac{3M}{C_d N} \Bigl(k\vee\Bigl(\Vc\log \frac{2N}{M} + \frac{\sigma^2}{2}k + \log 8\Bigr)\Bigr)\Bigr)^{\frac{\aH}{d}}}_{\rm(bias)} 
+\underbrace{\sqrt{\frac{\Vc l_Y^2}{kL}\log\frac{N}{\d}}}_{\rm(variance)}\\
&=O\Bigl(\Bigl(\frac{M}{N}\Bigl(k\vee \Bigl(\log \frac{N}{M} 
+ \sigma^2 k\Bigr)\Bigr)\Bigr)^{\frac{\aH}{d}} 
+ \sqrt{\frac{1}{kM(1-\tau)(1-e^{-\frac{\sigma^2}{2}k})}\log\frac{N}{\d}}
\Bigr),
\end{align*}
which leads to the desired bound.
\qed

\subsection{Analysis of the \texorpdfstring{$(k,M)$}{(k,M)}-NN Regression Rule}

We now prove the main theorem for the $(k,M)$-NN regression rule.

\ThmRegression*

\subsubsection{Proof of Theorem~\ref{thm:regression}(a)}
Recall that in Assumption~\ref{assum:boundedness}, we let $V\defeq \sup_{x\in\Xc} v(x)<\infty$ and $H\defeq \sup_{x\in\Xc} |\eta(x)|<\infty$. 
Since the support of $\mu$ is bounded, we let $R\defeq \diam(\supp(\mu))<\infty$.
Recall that we wish to bound
\begin{align*}
\E_{\partitions}\|\splitreg{k}{M}-\eta\|_2 
&= \E_{\partitions}\sqrt{\E_X[(\splitreg{k}{M}(X;\partitions)-\eta(X))^2]}\\
&\le \sqrt{\E_\partitions\E_X[(\splitreg{k}{M}(X;\partitions)-\eta(X))^2]}.
\end{align*}
Here, the inequality follows by Jensen's inequality.
We will consider the $(k,M,L)$-NN regression rule with $L=\lceil(1-\tau)^2 M\rceil$ as a proof device, where $\tau\in(0,1)$ is to be determined at the end of the proof.
Pick any $x\in\Xc$.
We decompose the squared error $(\splitreg{k}{M}(x;\partitions)-\eta(x))^2$ as 
\begin{align*}
(\splitreg{k}{M}(x;\partitions)-\eta(x))^2
&\le 3\Bigl\{(\splitreg{k}{M}(x;\partitions)-\splitregexp{k}{M}(x;\partitions_X))^2 
\\&\qquad 
+(\splitregexp{k}{M}(x;\partitions_X) - \selectregexp{k}{M}{L}(x;\partitions_X))^2 
\\&\qquad 
+(\selectregexp{k}{M}{L}(x;\partitions_X)-\eta(x))^2\Bigr\},
\end{align*}
where we use the inequality $(a+b+c)^2\le 3(a^2+b^2+c^2)$.
Taking expectation over the $Y$ values given the data splits $\partitions_X$, we have
\begin{align*}
\E_{\Yb|\partitions_X}[(\splitreg{k}{M}(x;\partitions)-\eta(x))^2]\numberthis\label{eq:error_decomposition_}
&\le 3\Bigl\{
\underbrace{(\splitregexp{k}{M}(x;\partitions_X) - \selectregexp{k}{M}{L}(x;\partitions_X))^2}_{\rm(approximation)} 
\\&\qquad 
+\underbrace{\Var_{\Yb|\partitions_X}(\splitreg{k}{M}(x;\partitions))}_{\rm(variance)}
\\&\qquad 
+\underbrace{(\selectregexp{k}{M}{L}(x;\partitions_X)-\eta(x))^2}_{\rm(bias)}\Bigr\}.
\end{align*}
We now bound the three terms separately in the next steps.

\paragraph*{Step 1. Approximation term}
We claim that the first term, which is the approximation error between the $(k,M)$-NN rule and $(k,M,L)$-NN rule is bounded as $O(\tau^2)$. 
We first note that, by Jensen's inequality, we have 
\begin{align*}
|\splitregexp{k}{M}(x;\partitions_X)-\selectregexp{k}{M}{L}(x;\partitions_X)|
&\le \E_{\Yb|\partitions_X}\bigl[|\splitreg{k}{M}(x;\partitions)-\selectreg{k}{M}{L}(x;\partitions)|\bigr].
\end{align*}
The argument inside the expectation can be bounded pointwise as follows:
\begin{align*}
\bigl|\splitreg{k}{M}(x;\partitions)-\selectreg{k}{M}{L}(x;\partitions)\bigr| 
&\le
\Bigl|
\frac{1}{M}\sum_{m=1}^M \nnreg{k}(x;\dataset_m)
- \frac{1}{L}\sum_{j=1}^L \nnreg{k}(x;\dataset_{m_j})
\Bigr|
\\
&\le \Bigl(\frac{1}{L}-\frac{1}{M}\Bigr) \sum_{j=1}^L |\nnreg{k}(x;\dataset_{m_j})| + \frac{1}{M} \sum_{j=L+1}^M | \nnreg{k}(x;\dataset_{m_j})|\\
&\stackrel{(a)}{\le} \Bigl(1-\frac{L}{M}\Bigr)H + \frac{M-L}{M}H
\\
&= 2H\Bigl(1-\frac{L}{M}\Bigr)\\
&\stackrel{(b)}{\le} 4H\tau,
\numberthis\label{eq:l2_bias_bound_B_}
\end{align*}
where (a) follows by the assumption $|\eta(x)|\le H$ for all $x\in\Xc$ and (b) follows since $L=\lceil(1-\tau)^2M\rceil\ge (1-2\tau)M$.
Thus, the approximation error $|\splitregexp{k}{M}(x;\partitions_X)-\selectregexp{k}{M}{L}(x;\partitions_X)|$ is upper bounded by $4H\tau$. 

\paragraph*{Step 2. Variance term}
Similar to \eqref{eq:l2_bias_bound_A}, we can easily show that 
\begin{align*}
\Var_{\Yb|\partitions_X}(\splitreg{k}{M}(x;\partitions))
&= \frac{1}{(kM)^2} \sum_{i=1}^k \sum_{m=1}^M v(X_{(i)}(x;\Xb_m))
\stackrel{(b)}{\le} \frac{V}{kM}.
\numberthis\label{eq:l2_bias_bound_A_}
\end{align*}
Here, (a) follows by the independence of $Y_i$'s conditioned on the splits $\partitions_X$ and (b) follows from the assumption $v(x)\le V$ for all $x\in\Xc$.

\paragraph*{Step 3. Bias term}
In Step 3 in the proof of Theorem~\ref{thm:regression_selective}(a), in place of applying Lemma~\ref{lem:key_selective}, we now apply Lemma~\ref{lem:key_original} with $q=\tau$ and $p=\frac{1}{n}(k+\log\frac{1}{\tau}+\sqrt{2k\log\frac{1}{\tau}+(\log\frac{1}{\tau})^2})=O(\frac{M}{N}(k+\log\frac{1}{\tau}))$:
\begin{align*}
&\E_{\partitions_X}[(\selectregexp{k}{M}{L}(x;\partitions_X)-\eta(x))^2]
\le A^2\Bigl(\Bigl(\frac{2^dp}{C_d}\Bigr)^{\frac{2\aH}{d}} + R^{2\aH}e^{-\frac{(1-\tau)\tau^2}{2}M}\Bigr).
\end{align*}

\paragraph*{Step 4}
Plugging in \eqref{eq:l2_bias_bound_B_}, \eqref{eq:l2_bias_bound_A_}, and \eqref{eq:l2_bias_bound_C} to the error decomposition~\eqref{eq:error_decomposition_} leads to
\begin{align*}
\E[(\splitreg{k}{M}(x;\partitions)-\eta(x))^2]
&\le 3\Bigl\{
\underbrace{16H^2\tau^2}_{\rm(approximation)}
+ \underbrace{\frac{V}{kM}}_{\rm(variance)}
+ \underbrace{A^2 \Bigl(\frac{2^dp}{C_d}\Bigr)^{\frac{2\aH}{d}} 
+ A^2 R^{2\aH} e^{-\frac{(1-\tau)\tau^2}{2}M}\Bigr)}_{\rm(bias)}\Bigr\}\\
&= O\Bigl(\tau^2 + \frac{1}{kM} + \Bigl(\frac{M}{N}\bigl(k+\log\frac{1}{\tau}\bigr)\Bigr)^{\frac{2\aH}{d}} + e^{-\frac{(1-\tau)\tau^2}{2}M}\Bigr)\\
&= O\Bigl(\tau^2 + \frac{1}{M} + \Bigl(\frac{M}{N}\log\frac{1}{\tau}\Bigr)^{\frac{2\aH}{d}} + e^{-\frac{(1-\tau)\tau^2}{2}M}\Bigr),
\end{align*}
where in the last equality we assume $k=O(1)$ is fixed.
To handle the additional approximation term $O(\tau^2)$ compared to \eqref{eq:l2_error_regression_selective}, we need to set $\tau$ to decay to 0 as $M$ increases.
Specifically, we choose to set $\tau=\sqrt{\frac{(\log M)^{1+\eps}}{M}}$ for any $\eps>0$, then $e^{-\frac{(1-\tau)\tau^2}{2}M}=e^{-\half(\log M)^{1+\eps}(1-\tau)}$ decays faster than any polynomial rate, which leads to the final rate
\begin{align*}
\E[(\splitreg{k}{M}(x;\partitions)-\eta(x))^2]
&= O\Bigl(
\frac{(\log M)^{1+\eps}}{M} + \Bigl(\frac{M\log M}{N}\Bigr)^{\frac{2\aH}{d}}
\Bigr).\qed
\end{align*}

\subsubsection{Proof of Theorem~\ref{thm:regression}(b)}
The proof readily follows from a slight extension of the proof of Theorem~\ref{thm:regression_selective}(b), since we can also approximate the performance of $(k,M)$-NN regression rule in $\infty$-norm using a $(k,M,L)$-NN estimator $\selectreg{k}{M}{L}(x;\partitions)$ by the triangle inequality:
\begin{align*}
\|\splitreg{k}{M}-\eta\|_{\infty}
&\le \underbrace{\|\splitregexp{k}{M} - \selectregexp{k}{M}{L}\|_{\infty} }_{\rm(approximation)}
+\underbrace{\|\splitreg{k}{M}-\splitregexp{k}{M}\|_{\infty}}_{\rm(variance)}
+\underbrace{\|\selectregexp{k}{M}{L}-\eta\|_{\infty}}_{\rm(bias)}.
\end{align*}
Note that the first term, which is the approximation error in the sup norm, can be upper-bounded by $4H\tau$, invoking the same bound in \eqref{eq:l2_bias_bound_B_}.
For the second term, which is the variance of $\splitreg{k}{M}$, we can follow the same logic in the proof of Theorem~\ref{thm:regression_selective}(b) and can be bounded by $\sqrt{\frac{\Vc l_Y^2}{kM}\log\frac{N}{\d}}$ with probability $1-\d$.
The last term is the bias of the $(k,M,L)$-NN rule, and we invoke the following lemma which is a variant of Lemma~\ref{lem:key_variant}.
\begin{lemma}\label{lem:key_variant_original}
Assume that $\mu$ is a $(C_d,d)$-homogeneous measure and the collection of all closed balls in $\Xc$ has finite VC dimension $\Vc$.
Pick any any positive integers $n$, $k$, and $M$.
Further, pick any $q\in[0,1)$ and $\tau\in[0,1)$, so that $L=\lceil (1-\tau)(1-q) M\rceil\ge 1$.
If the data splits $\Xb_1,\ldots,\Xb_M$ of size $n$ are drawn \iid from $\mu^{\otimes n}$, we have
\begin{align*}
\Pr\Bigl(\max_{j\in[L]} \sup_{x\in\Xc} r_{k}(x;\Xb_{m_j})>h_{n,k}(q)\Bigr)
&\le e^{\frac{(1-q)\tau^2}{2}M},
\end{align*}
where $h_{n,k}(q)$ is defined in \eqref{eq:def_threshold}.
\end{lemma}
We omit the proof as it is a straightforward modification of that of Lemma~\ref{lem:key_variant}, replacing Lemma~\ref{lem:key_selective} with Lemma~\ref{lem:key_original}.
By applying this lemma for $q=\tau$, with probability $\ge 1-e^{-\frac{(1-\tau)\tau^2}{2}M}$.
Suppose that $M\ge \frac{2}{\kappa^2(1-\kappa)}\log\frac{1}{\d}$ for some $\kappa\in (0,1)$.
If we set $\tau=\sqrt{\frac{2}{(1-\kappa)}\frac{1}{M}\log\frac{1}{\d}}$, then $\tau\le \kappa$ by the choice of $M$, and $e^{-\frac{(1-\tau)\tau^2}{2}M}\le e^{-\frac{(1-\kappa)\kappa^2}{2}M}\le \d$.
Therefore, by a union bound, we can now bound the sup norm as follows: with probability at least $1-\d$, we have
\begin{align*}
\|\splitreg{k}{M}-\eta\|_\infty
&\le \underbrace{4H\tau}_{\rm(approximation)} 
+\underbrace{\sqrt{\frac{\Vc l_Y^2}{kM}\log\frac{2N}{\d}}}_{\rm(variance)}
+ \underbrace{A\Bigl(\frac{3M}{C_d N} \Bigl(k\vee\Bigl(\Vc\log \frac{2N}{M} + \log \frac{8}{\tau}\Bigr)\Bigr)\Bigr)^{\frac{\aH}{d}}}_{\rm(bias)}
\\
&=O\Bigl(\sqrt{\frac{1}{(1-\kappa)M}\log\frac{1}{\d}} 
+ \Bigl(\frac{M}{N}\Bigl(2\Vc\log \frac{N}{M} + \log \frac{(1-\kappa)M}{\log\frac{1}{\d}}\Bigr)\Bigr)^{\frac{\aH}{d}} 
+ \sqrt{\frac{1}{M}\log\frac{N}{\d}}
\Bigr),
\end{align*}
where in the last equality we assume $k=O(1)$ is fixed.
This concludes the desired bound.
\qed

\section{Analyzing Classification Rules}
\label{app:classification}
All theoretical guarantees on classifiers in this paper are analogous to the results for the standard $k$-NN classifier established in the seminal paper~\citep{Chaudhuri--Dasgupta2014}.

\subsection{Definitions}
We first review some technical definitions introduced in \citep{Chaudhuri--Dasgupta2014}.
For any $x\in\Xc$ and any $0\le p\le 1$, define the \emph{probability radius} of a ball centered at $x$ as
\begin{align*}
    r_p(x) = \inf\{r\suchthat \mu(\closedball(x,r))\ge p\}.
\end{align*}
One can show that $\mu(\openball(x,r_p(x)))\ge p$, and $r_p(x)$ is the smallest radius for which this holds.

The support of the distribution $\mu$ is defined as
\begin{align*}
    \supp(\mu)\defeq\{x\in\Xc\suchthat \mu(\closedball(x,r))>0,\forall r>0\}.
\end{align*}
In separable metric spaces, it can be shown that $\mu(\supp(\mu))=1$; see \citep{Cover--Hart1967} or  \citep[Lemma~24]{Chaudhuri--Dasgupta2014}.

We define for any measurable set $A\subset \Xc$ with $\mu(A)>0$,
\begin{align*}
\eta(A)
    \defeq p(y=1|A)
    =\frac{1}{\mu(A)}\int_A p(y=1|x)\diff\mu(x).
\end{align*}
This is the conditional probability of $Y$ being 1 given a point $X$ chosen at random from the distribution $\mu$ restricted to the set $A$.

Based on the definitions above, we now define the effective interiors of the two classes, and the effective boundary.
For $p\in [0,1]$ and $\Delta>0$, we define the \emph{effective interiors} for each class as
\begin{align*}
    \Xc_{p,\Delta}^+
    &\defeq\supp(\mu) 
    \\&\quad 
    \cap\Bigl\{x\in\Xc\suchthat \eta(x)>\half\Bigr\}
    \\&\quad 
    \cap\Bigl\{x\in\Xc\suchthat
        \eta(\closedball(x,r))\ge \half+\Delta, \forall r\le r_p(x)
        \Bigr\}
\end{align*}
and
\begin{align*}
    \Xc_{p,\Delta}^-
    &\defeq\supp(\mu)
    \\&\quad
    \cap\Bigl\{x\in\Xc\suchthat \eta(x)<\half\Bigr\}
    \\&\quad 
    \cap\Bigl\{x\in\Xc\suchthat
        \eta(\closedball(x,r))\le \half-\Delta, \forall r\le r_p(x)
        \Bigr\}.
\end{align*}

\iftrue
\begin{remark}[A Tie-breaking Mechanism]
\citet{Chaudhuri--Dasgupta2014} discuss how to handle ties in the nearest neighbor distances, which may occur with non-zero probability, \eg in a discrete instant space $\Xc$.
We assume that there is no distance tie in what follows or the sake of simplicity, but our analyses can be similarly modified for non-zero distance ties.

\end{remark}

\subsection{Analysis of the \texorpdfstring{$(k,M,L)$}{(k,M,L)}-NN Classification Rule}

We claim the following convergence guarantees for the $(k,M,L)$-NN classifier. 

\begin{theorem}
\label{thm:classification_rate_margin_selective}
Under Assumptions~\ref{assum:smooth} and \ref{assum:margin}, the following statements hold for any fixed $k\ge 1$, where $M_o$, $C_o$, $C_o'$, and $C_o''$ are constants depending on $k,\a,\b,$ and $B$.
In what follows, we assume 
either $kM\ge \frac{2c}{c-1}\log\frac{2}{\d^2}$ and $k\ge c\frac{2-\tau^2}{\g^2\tau}$ for some $c>1$, or $k\ge 1$, $M\ge \frac{1}{\phi_{k;\g,\tau}}\log\frac{2}{\d^2}$, and $\tau\in(\tau_{k;\g}^{\inf},1)$. Here, $\phi_{k;\g,\tau}$ and $\tau_{k;\g}^{\inf}$ are defined in Lemma~\ref{lem:key_selective}.
\begin{enumerate}[label=(\alph*)]
\item Pick any $\d\in(0,1)$ and $M_o>0$ such that $M=M_oN^{\frac{2\a}{2\a+1}}(\log\frac{1}{\d})^{\frac{1}{2\a+1}}\le N$. 
With probability at least $1-\d$ over $\partitions$,
\begin{align*}
&\Pr(\selectcls{k}{M}{L}(X)\neq g(X)|\partitions)
\le \d + 
C_o\Bigl(\frac{1}{N}\log\frac{1}{\delta}\Bigr)^{\frac{\b\a}{2\a+1}}.
\end{align*}
\item Set $M=M_oN^{\frac{2\a}{2\a+1}}$. Then
\begin{align*}
\E_\partitions[R(\selectcls{k}{M}{L})]-R^* &\le {C_o'{N^{-\frac{\a(\b+1)}{2\a+1}}}}\quad\text{and}\quad\\
\CIS_N(\selectcls{k}{M}{L})&\le {C_o''{N^{-\frac{\a\b}{2\a+1}}}}.
\end{align*}
\end{enumerate}
\end{theorem}

\subsubsection{A Key Technical Lemma}

The analysis of the standard $k$-NN classifier by \citet{Chaudhuri--Dasgupta2014} relies on their key lemma~\citep[Lemma~7]{Chaudhuri--Dasgupta2014}, which proves a sufficient condition for the $k$-NN classifier to agree with the Bayes classifier.
Here, we provide an analogous lemma for the $(k,M,L)$-NN classifier.
\begin{lemma}
\label{lem:sufficient_selective}
For any $x_o\in\Xc$, pick any $p\in(0,1)$ and $\Delta\in(0,\half]$. 
For each $m\in[M]$, 
define $\ball_m\defeq \openball(x_o,r_{k+1}(x_o;\Xb_m))$.
For any $L\le M$, we have
\begin{align*}
\ones(\selectcls{k}{M}{L}(x_o;\partitions)\neq \gbayes(x_o))
&\le \ones(x_o\in\partial_{p,\Delta})\\
&\quad+\ones\Bigl(\max_{j\in [L]} r_{k+1}(x_o;\Xb_{m_j})> r_p(x_o)\Bigr)\\
&\quad+\ones\Bigl(\Bigl|\frac{1}{L }\sum_{j=1}^{L }(\Yh(\ball_{m_j};\dataset_{m_j})-\eta(\ball_{m_j}))\Bigr|\ge \Delta\Bigr),
\numberthis\label{eq:sufficient_classification_selective}
\end{align*}
where $m_1,\ldots,m_L$ are the indices that correspond to the $L$-smallest values among $(r_{k+1}(x_o;\Xb_m))_{m=1}^M$.
\end{lemma}

To prove this, we need the following lemma:
\begin{lemma}[{\citealp[Lemma~26]{Chaudhuri--Dasgupta2014}}]
\label{lem:lem26_cd14}
Suppose that for some $x_o\in\supp(\mu)$ and $r_o>0$ and $q>0$, we have [$r\le r_o$ $\Rightarrow$ $\eta(\closedball(x_o,r))\ge q$].
Then, we also have [$r\le r_o$ $\Rightarrow$ $\eta(\openball(x_o,r))\ge q$].
\end{lemma}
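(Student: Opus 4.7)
The plan is to invoke continuity of the measure from below, realizing the open ball as the increasing union of closed balls of strictly smaller radius. Concretely, choose any sequence $r_n \uparrow r$ with $r_n < r$; since any $x$ with $\rho(x_o, x) < r$ must satisfy $\rho(x_o, x) \le r_n$ for all $n$ large enough, one has $\closedball(x_o, r_n) \subseteq \openball(x_o, r)$ for each $n$ and $\bigcup_n \closedball(x_o, r_n) = \openball(x_o, r)$. Every $r_n$ lies in $(0, r_o]$, so the hypothesis of the lemma applies to each closed ball in the sequence, giving $\eta(\closedball(x_o, r_n)) \ge q$ for all $n$.

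Next I would apply the monotone convergence theorem to $\ones_{\closedball(x_o, r_n)}$ and to the nonnegative integrands $\eta \cdot \ones_{\closedball(x_o, r_n)}$, both of which increase pointwise to the analogous quantities on $\openball(x_o, r)$. This yields
\[
\mu(\closedball(x_o, r_n)) \to \mu(\openball(x_o, r)), \qquad \int_{\closedball(x_o, r_n)} \eta \, d\mu \to \int_{\openball(x_o, r)} \eta \, d\mu.
\]
Because $x_o \in \supp(\mu)$, the limiting mass $\mu(\openball(x_o, r))$ is strictly positive, so $\mu(\closedball(x_o, r_n)) > 0$ for all sufficiently large $n$ and the quotient $\eta(\closedball(x_o, r_n))$ is well-defined past that index. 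Dividing the two limits then gives $\eta(\closedball(x_o, r_n)) \to \eta(\openball(x_o, r))$, and since each term in the convergent sequence satisfies the bound $\ge q$, the limit inherits it.

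There is essentially no serious obstacle; the only mild technicality is to restrict to $n$ large enough that $\mu(\closedball(x_o, r_n)) > 0$ before forming the ratio, which is guaranteed by the support assumption on $x_o$. Once the set-theoretic identity $\bigcup_n \closedball(x_o, r_n) = \openball(x_o, r)$ is recorded, the remainder is a direct application of monotone convergence.
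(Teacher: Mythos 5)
Your proof is correct. The paper does not supply its own argument for this lemma (it is cited directly from Chaudhuri and Dasgupta, 2014), and your continuity-from-below argument---realizing $\openball(x_o,r)$ as the increasing union $\bigcup_n \closedball(x_o,r_n)$ with $r_n\uparrow r$, passing numerator and denominator to the limit by monotone convergence, and letting the limit inherit the bound $q$---is precisely the standard proof of the original. The only point worth recording is that this paper defines $\closedball(x,s)\defeq\overline{\openball(x,s)}$ rather than $\{x'\suchthat\rho(x,x')\le s\}$; since $\openball(x_o,r_n)\subseteq\closedball(x_o,r_n)\subseteq\{x'\suchthat\rho(x_o,x')\le r_n\}\subseteq\openball(x_o,r)$, your set identity and monotonicity survive under that convention, and $x_o\in\supp(\mu)$ in fact gives $\mu(\closedball(x_o,r_n))>0$ for every $n$ with $r_n>0$, not only for $n$ large.
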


\begin{proof}[Proof of Lemma~\ref{lem:sufficient_selective}] %
Suppose $x_o\notin\partial_{p,\Delta}$.
Without loss of generality, consider $x_o\in\Xc_{p,\Delta}^+$, whereupon $\gbayes(x_o)=1$.
By definition of the effective interior, 
$\eta(\closedball(x_o,r))\ge \half+\Delta$ for all $r\le r_p(x_o)$.
If we further suppose
\[
\max_{j\in [L]} r_{k+1}(x_o;\Xb_{m_j})\le r_p(x_o),
\]
we have
\[
\eta(\ball_{m_j})
=\eta(\openball(x_o,r_{k+1}(x_o;\Xb_{m_j})))
\ge \half+\Delta
\]
for any $j\in[L]$, by Lemma~\ref{lem:lem26_cd14}.

Further, if $|\frac{1}{L}\sum_{j=1}^{L }(\Yh(\ball_{m_j};\dataset_{m_j})-\eta(\ball_{m_j}))|< \Delta$, then 
\[
\selectreg{k}{M}{L}(x_o;\partitions)
=\frac{1}{L}\sum_{j=1}^L \Yh(\ball_{m_j};\dataset_{m_j}) > \half,
\]
where we recall that $\selectreg{k}{M}{L}(\cdot;\partitions)$ denotes the $(k,M,L)$-NN regressor based on the training data splits $\partitions=\{(\Xb_m,\Yb_m)\}_{m=1}^M$.
This implies that $\selectcls{k}{M}{L}(x_o;\partitions)=1=g(x_o)$, which completes the proof.
\end{proof}

\subsubsection{Proof of Theorem~\ref{thm:classification_rate_margin_selective}(a)}

Before we prove Theorem~\ref{thm:classification_rate_margin_selective}(a), we first present a more general upper bound on the misclassification error rate, which is a variant of the main result of \citep{Chaudhuri--Dasgupta2014} (Theorem~5 therein).
Theorem~\ref{thm:classification_rate_margin_selective}(a) will follow as a corollary of this theorem under the smoothness and margin condition.

\begin{theorem}
\label{thm:classification_general_upper_bound_selective}
Let $k\ge 1$ be fixed and
pick any $\d\in(0,1)$.
Let $L=\lceil(1-\tau)(1-q_{k;\g})M\rceil$ for some $\tau\in (0,1)$ and $\g\in(0,1)$. 
Let $p\defeq\frac{1}{1-\g}\frac{k}{n}$ and
let $\Delta\defeq\min(\half,\sqrt{\frac{1}{kL}\log\frac{2}{\d}})$.
Then, for a set of data splits $\partitions=\{\dataset_1,\ldots,\dataset_M\}$, where every split $\dataset_m$ has $n$ data points, with probability at least $1-\d$ over $\partitions$, we have
\begin{align*}
\Pr(\selectcls{k}{M}{L}(X;\partitions)\neq \gbayes(X)|\partitions)
\le \d + \mu(\partial_{p,\Delta}),
\end{align*}
either if $kM\ge \frac{2c}{c-1}\log\frac{2}{\d^2}$ and $k\ge c\frac{2-\tau^2}{\g^2\tau}$ for some $c>1$, or if $k\ge 1$, $M\ge \frac{1}{\phi_{k;\g,\tau}}\log\frac{2}{\d^2}$, and $\tau\in(\tau_{k;\g}^{\inf},1)$; see Lemma~\ref{lem:key_selective} for the definitions of $\phi_{k;\g,\tau}$ and $\tau_{k;\g}^{\inf}$.
\end{theorem}

\begin{proof}
Pick any $x_o\in\Xc$.
Applying Lemma~\ref{lem:sufficient_selective}, we have
$$
\begin{aligned}
\ones(\selectcls{k}{M}{L}(x_o;\partitions)\neq \gbayes(x_o))
&\le \ones(x_o\in\partial_{p,\Delta})+ I_{\bad}(x_o;\partitions),
\end{aligned}$$ where we define the bad event indicator variable
\begin{align*}
I_{\bad}(x_o;\partitions)
&\defeq \ones\Bigl(\max_{j\in [L]} r_{k+1}(x_o;\Xb_{m_j})> r_p(x_o)\Bigr)
\\&\quad+
\ones\Bigl(\Bigl|\frac{1}{L }\sum_{j=1}^{L }(\Yh(\ball_{m_j};\dataset_{m_j})-\eta(\ball_{m_j}))\Bigr|\ge \Delta\Bigr),
\numberthis\label{eq:bad_indicator_selective}
\end{align*}
where $m_1,\ldots,m_L$ are the indices for the $L$ smallest distances among $\{r_{k+1}(x_o;\Xb_m)\}_{m=1}^M$. For any fixed point $x_o\in\Xc$, if we take the expectation over the training data splits $\partitions$, we have
\begin{align*}
\E[I_{\bad}(x_o;\partitions)]
&\defeq \P\Bigl(\max_{j\in [L]} r_{k+1}(x_o;\Xb_{m_j})> r_p(x_o)\Bigr)
\\&\qquad
+\P\Bigl(\Bigl|\frac{1}{L }\sum_{j=1}^{L }(\Yh(\ball_{m_j};\dataset_{m_j})-\eta(\ball_{m_j}))\Bigr|\ge \Delta\Bigr).
\numberthis\label{eq:exp_ind_bad_event_selective}
\end{align*}
To handle the second term, we need the following concentration bound, which is a distributed version of \citep[Lemma~10]{Chaudhuri--Dasgupta2014}. 

\begin{lemma}
\label{lem:label_concentration_new}
\[
\P\Bigl(\Bigl|\frac{1}{L }\sum_{j=1}^{L }(\Yh(\ball_{m_j};\dataset_{m_j})-\eta(\ball_{m_j}))\Bigr|\ge \Delta\Bigr)\le 2e^{-{2\Delta^2}kL}.
\numberthis\label{eq:Hoeffding}
\]
\end{lemma}

\begin{proof}[Proof of Lemma~\ref{lem:label_concentration_new}]
To prove it, first observe that we can draw the training data splits $\partitions=\dataset_{1:M}$, $\dataset_m=\{(X_{mi},Y_{mi})\}_{i=1}^n$, where $N=Mn$, by the following steps. 
\begin{enumerate}
\item[1.] Draw $M$ points $X_{1}^{(1)},\ldots,X_{1}^{(M)}\in\Xc$ independently at random, according to the marginal distribution of the $(k+1)$-th nearest neighbor of the fixed point $x_o$ with respect to $n$ independent sample points.
\item[2.] Sort the $M$ points $\{X_{1}^{(1)},\ldots,X_{1}^{(M)}\}$ based on their distances to $x_o$. 
Let $\tilde{X}_{1}^{(1)},\ldots,\tilde{X}_{1}^{(M)}$ denote the sorted points in the increasing order of the distances, where we break ties at random.
Let $\widetilde{\openball}_j\defeq\openball(x_o,\rho(x_o,\tilde{X}_{1}^{(j)}))$.
\item[3(a).] For each $j\in[L]$, pick $k$ points at random from the distribution $\mu$ restricted to $\widetilde{\openball}_j$.
\item[3(b).] For each $j\in[L]$, pick $n-k-1$ points at random from the distribution $\mu$ restricted to $\Xc\backslash \widetilde{\openball}_j$.
\item[4.] For each $m\in[M]\backslash[L]$, repeat the same steps in 3a and 3b.
\item[5.] For each $m\in[M]$, randomly permute the $n$ points obtained in this way.
\item[6.] For each $m\in[M]$ and for $X_{mi}$ in the permuted order, draw a label $Y_{mi}$ from the conditional distribution $\eta(X_{mi})$.
\end{enumerate}

We now suppose that we are given $\tilde{X}_{1}^{(1)},\ldots,\tilde{X}_{1}^{(M)}$ chosen in Step 1 and Step 2.
Recall that we denote by $m_1,\ldots,m_L$ the indices that correspond to the $L$-smallest values among $(r_{k+1}(x_o;\Xb_m))_{m=1}^M$.
Since the corresponding sample points are $\tilde{X}_{1}^{(1)},\ldots,\tilde{X}_{1}^{(L)}$, we can write $\ball_{m_j}=\widetilde{\openball}_j$.
Hence, in the desired inequality, $\hat{Y}(\ball_{m_j};\dataset_{m_j})$ for each $j\in[L]$ is the average of the $Y$-values which correspond to the $X$'s drawn from Step 3(a).
Since the corresponding $Y$'s have expectation $\E[Y|\Xb\in\widetilde{\openball}_j]=\eta(\widetilde{\openball}_j)$ for each $j\in[L]$ and the total $kL$ of $Y$'s are independent, we can apply Hoeffding's inequality and obtain
\begin{align*}
&\P\Bigl(\Bigl|\frac{1}{L }\sum_{j=1}^{L }(\Yh(\ball_{m_j};\dataset_{m_j})-\eta(\ball_{m_j}))\Bigr|\ge \Delta
~\Big|~ \tilde{X}_{1}^{(1)},\ldots,\tilde{X}_{1}^{(M)}
\Bigr)
\le 2e^{-2\Delta^2 kL}.
\end{align*}
Taking expectations over $\tilde{X}_{1}^{(1)},\ldots,\tilde{X}_{1}^{(M)}$, we prove the desired inequality.
\end{proof}

Now, by applying Lemma~\ref{lem:key_selective} and Lemma~\ref{lem:label_concentration_new} on the first and second terms, respectively,
we have
\[
\E_{\partitions}[I_{\bad}(x_o,\partitions)] \le
P_e(k,M;\g,\tau)
+2e^{-2\Delta^2kL}
\defeq \d_o.
\]
Note that the expectation is over the training data $\partitions$.
Taking expectation over the query point $X_o\sim \mu$, we have $\E_{X_o,\partitions}[I_{\bad}(X_o,\partitions)]\le \d_o^2$, which in turn implies, by Markov's inequality, that 
\begin{align*}
\Pr_{\partitions}\bigl(\E_{X_o}[I_{\bad}(X_o,\partitions)]\ge \d_o\bigr)\le \d_o.
\numberthis\label{eq:markov}
\end{align*}
Note that Lemma~\ref{lem:sufficient} implies
\begin{align*}
&\Pr\bigl(\selectcls{k}{M}{L}(X_o;\partitions)\neq \gbayes(X_o)|\partitions\bigr) 
\le \mu(\partial_{p,\Delta}) + \E_{X_o}[I_{\bad}(X_o,\partitions)].
\end{align*}
To conclude, we first note that $2e^{-2\Delta^2 kL}=\frac{\d^2}{4}<\frac{\d^2}{2}$. 
Finally, to bound $P_e(k,M;\g,\tau)$ by $\frac{\d^2}{2}$, we invoke the latter part of Lemma~\ref{lem:key_selective}. 
This completes the proof of Theorem~\ref{thm:classification_general_upper_bound_selective}.
\end{proof}

Now we prove Theorem~\ref{thm:classification_rate_margin_selective}(a).
Recall that we let $\partial\eta_\Delta\defeq \{x\in\supp(\mu)\suchthat |\eta(x)-\half|\le \Delta\}$ to denote the decision boundary with margin $\Delta\ge 0$.
Under the smoothness of the measure $\mu$, the effective decision boundary $\partial_{p,\Delta}$ is a subset of the decision boundary with a certain margin as stated below:

\begin{lemma}[{\citealp[Lemma~18]{Chaudhuri--Dasgupta2014}}]
\label{lem:smooth_margin}
If $\eta$ is $(\alpha,A)$-smooth in $(\Xc,\rho,\mu)$, then for any $p\in[0,1]$ and $\Delta\in (0,\half]$, we have
$\partial_{p,\Delta}
\subset 
\partial\eta_{\Delta+Ap^\a}$.
\end{lemma}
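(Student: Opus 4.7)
\textbf{Proof proposal for Lemma~\ref{lem:smooth_margin}.} The plan is to unfold the effective boundary as $\partial_{p,\Delta} = \supp(\mu) \setminus (\Xc_{p,\Delta}^+ \cup \Xc_{p,\Delta}^-)$ and apply the $(\a,A)$-smoothness inequality once, at the radius where the defining condition of the effective interior fails. Concretely, I would fix an arbitrary $x \in \partial_{p,\Delta}$ and split on the sign of $\eta(x) - 1/2$. The case $\eta(x) = 1/2$ is immediate since $|\eta(x) - 1/2| = 0 \le \Delta + Ap^\a$, and the cases $\eta(x) > 1/2$ and $\eta(x) < 1/2$ are symmetric, so it suffices to treat the former.

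Assume $\eta(x) > 1/2$. Since $x \notin \Xc_{p,\Delta}^+$, the conjunction in the definition must fail in its last clause, so there exists some $r \le r_p(x)$ with $\eta(\closedball(x,r)) < 1/2 + \Delta$. Decomposing
\[
\eta(x) - 1/2 = \bigl(\eta(x) - \eta(\closedball(x,r))\bigr) + \bigl(\eta(\closedball(x,r)) - 1/2\bigr),
\]
I would bound the first bracket by $A\mu^\a(\openball(x,r))$ via $(\a,A)$-smoothness and the second by $\Delta$ by the choice of $r$. Combining these gives $\eta(x) - 1/2 < \Delta + A\mu^\a(\openball(x,r))$, and substituting the open-ball mass bound $\mu(\openball(x,r)) \le p$ for $r \le r_p(x)$ yields $|\eta(x) - 1/2| < \Delta + Ap^\a$, placing $x \in \partial\eta_{\Delta + Ap^\a}$.

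The one step worth pausing on is that bound $\mu(\openball(x,r)) \le p$ for $r \le r_p(x)$, since $r_p(x)$ is defined as an infimum over \emph{closed}-ball masses while smoothness is stated with an \emph{open} ball. I would dispatch this by noting that $\mu(\closedball(x,s)) < p$ for every $s < r_p(x)$ and that $\openball(x,r_p(x)) = \bigcup_{s < r_p(x)} \closedball(x,s)$, so continuity from below gives $\mu(\openball(x,r_p(x))) \le p$, and monotonicity extends this to every $r \le r_p(x)$. Beyond this minor bookkeeping the argument is just a single application of smoothness, so I do not anticipate any genuine obstacle.
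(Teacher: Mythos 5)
Your proof is correct, and since the paper imports this lemma from Chaudhuri--Dasgupta (2014) without reproving it, the right comparison is to their Lemma~18, whose argument yours reproduces essentially verbatim: negate membership in the effective interior to extract a radius $r\le r_p(x)$ where the averaged $\eta(\closedball(x,r))$ is within $\Delta$ of $1/2$, apply smoothness once, and use $\mu(\openball(x,r))\le p$. Your handling of the open-versus-closed-ball point via $\openball(x,r_p(x))=\bigcup_{s<r_p(x)}\closedball(x,s)$ and continuity from below is exactly the needed justification, so there is nothing to add.
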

Set $\Delta=\min(\half,\sqrt{\frac{1}{kL}\log\frac{2}{\d}})$. 
Since we choose $p=\frac{1}{1-\g}\frac{k}{n}=\frac{1}{1-\g}\frac{kM}{N}$ in Theorem~\ref{thm:classification_general_upper_bound_selective},
under the $(\b,B)$-margin condition (Assumption~\ref{assum:margin}), the general upper bound in Theorem~\ref{thm:classification_general_upper_bound} and Lemma~\ref{lem:smooth_margin} implies that, with probability $\ge 1-\d$,
\begin{align*}
\Pr(\selectcls{k}{M}{L}(X)\neq g(X)|\partitions)
&\le \d+
\mu(\partial_{p,\Delta})\\
&\le \d+ B\Bigl(\sqrt{\frac{1}{kL}\log\frac{2}{\d}} + A\Bigl(\frac{1}{1-\g}\frac{kM}{N}\Bigr)^{\a}\Bigr),
\end{align*}
provided that either $kM\ge \frac{2c}{c-1}\log\frac{2}{\d^2}$ and $k\ge c\frac{2-\tau^2}{\g^2\tau}$ for some $c>1$, or $k\ge 1$, $M\ge \frac{1}{\phi_{k;\g,\tau}}\log\frac{2}{\d^2}$, and $\tau\in(\tau_{k;\g}^{\inf},1)$.
This bound is optimized if we choose $M=\Th(N^{\frac{2\a}{2\a+1}}(\log\frac{1}{\d})^{\frac{1}{2\a+1}})$, which yields the rate $\mu(\partial_{p,\Delta})=O((\frac{1}{N}\log\frac{1}{\d})^{\frac{1}{2\a+1}})$.
\qed

\subsubsection{Proof of Theorem~\ref{thm:classification_rate_margin_selective}(b) Expected Risk Bound}
This proof modifies that of \citep[Theorem~4]{Chaudhuri--Dasgupta2014} in accordance with Lemma~\ref{lem:sufficient_selective} in place of \citep[Lemma~7]{Chaudhuri--Dasgupta2014}.
Set $L=\lceil(1-\tau)(1-e^{-\frac{\g^2}{2}k}) M\rceil$, $p=\frac{1}{1-\g}\frac{M}{N}$ as in Lemma~\ref{lem:sufficient_selective} and Theorem~\ref{thm:classification_general_upper_bound_selective}, respectively, and define $\Delta_o=Ap^\a$.

We first state and prove the following lemma,
which is as a distributed counterpart to \citep[Lemma~20]{Chaudhuri--Dasgupta2014}.
\begin{lemma}
\label{lem:cd14_lemma20_selective}
For any $x_o\in \supp(\mu)$ with $\Delta(x_o)> \Delta_o$. Under the $(\a,A)$-smoothness (Assumption~\ref{assum:smooth}) condition, we have
\begin{align*}
\E_{\partitions}[R(x_o;\selectcls{k}{M}{L})]-R^*(x_o) 
&\le P_e(k,M;\g,\tau) + 4\Delta(x_o)e^{-2(\Delta(x_o)-\Delta_o)^2kL}.
\end{align*}
\end{lemma}
\begin{proof}[Proof of Lemma~\ref{lem:cd14_lemma20_selective}]
Without loss of generality, assume that $\eta(x_o)>\half$. 
By the smoothness condition, for any $0\le r\le r_p(x_o)$, we have 
\[
\eta(\closedball(x_o,r))\ge \eta(x_o)-Ap^\a=\eta(x_o)-\Delta_o=\half+(\Delta(x_o)-\Delta_o),
\]
which implies $x_o\in \Xc_{p,\Delta(x_o)-\Delta_o}^+$ and thus $x_o\notin \partial_{p,\Delta(x_o)-\Delta_o}$.

Recall that for any classifier $\gh$, we can write $R(x_o;\gh)-R^*(x_o)=2\Delta(x_o)1(\gh(x_o)\neq \gbayes(x_o))$, where $R^*(x_o)$ is the Bayes risk.
We can apply Lemma~\ref{lem:sufficient_selective} with $\Delta\gets \Delta(x_o)-\Delta_o$ and have 
\begin{align}
R(x_o,\selectcls{k}{M}{L})-R^*(x_o) 
&= 2\Delta(x_o) 1(\selectcls{k}{M}{L}(x_o)\neq \gbayes(x_o)) \\
&\le 2\Delta(x_o) I_{\bad}(x_o;\partitions),
\label{eq:star_selective}
\end{align} where we define the bad-event indicator variable as
\begin{align*}
I_{\bad}(x_o;\partitions)
&\defeq \ones\Bigl(\max_{j\in [L]} r_{k+1}(x_o;\Xb_{m_j})> r_p(x_o)\Bigr)
\\&\quad
+\ones\Bigl(\Bigl|\frac{1}{L }\sum_{j=1}^{L }(\Yh(\ball_{m_j};\dataset_{m_j})-\eta(\ball_{m_j}))\Bigr|\ge {\Delta(x_o)-\Delta_o}\Bigr).
\end{align*}
By taking the expectations over the random splits $\partitions$ in \eqref{eq:star_selective}, we have $$ \E_{\partitions}R(x_o;\selectcls{k}{M}{L}) - R^*(x_o) \le 2\Delta(x_o) \E[I_{\bad}(x_o;\partitions)].$$ 
Now, by applying Lemma~\ref{lem:key_selective} and Lemma~\ref{lem:label_concentration_new} as in the proof of Theorem~\ref{thm:classification_general_upper_bound_selective}, we can bound the right hand side as 
\begin{align*}
\E_{\partitions} R(x_o;\selectcls{k}{M}{L})-R^*(x_o)
&\le 2\Delta(x_o)\bigl(P_e(k,M;\g,\tau) + 2e^{-2(\Delta(x_o)-\Delta_o)^2kL}\bigr)\\
&\le P_e(k,M;\g,\tau) + 4\Delta(x_o)e^{-2(\Delta(x_o)-\Delta_o)^2kL},
\end{align*} where the last inequality follows from the assumption that $\Delta(x_o)\le \half$. 
\end{proof}

We then prove the following statement under the smoothness and margin conditions, which is as a counterpart of \citep[Lemma~21]{Chaudhuri--Dasgupta2014} for the $(k,M,L)$-NN classifier, and Theorem~\ref{thm:classification_rate_margin_selective}(b) immediately follows as its corollary.
\begin{lemma}
\label{lem:cd14_lemma21_selective}
Under the $(\a,A)$-smoothness (Assumption~\ref{assum:smooth}) and the $(\b,B)$-margin (Assumption~\ref{assum:margin}) conditions, we have
\begin{align*}
\E_{\partitions}R(\selectcls{k}{M}{L}) - R^* 
&\le 
P_e(k,M;\g,\tau) 
+ 6B
\max\biggl(
2A\Bigl(\frac{1}{1-\g}\frac{kM}{N}\Bigr)^\a,
\sqrt{\frac{8(\b+2)}{kL}}
\biggr)^{\b+1}.
\numberthis\label{eq:lem:cd14_lemma21_selective}
\end{align*}
\end{lemma}

\begin{proof}[Proof of Lemma~\ref{lem:cd14_lemma21_selective}]
For each integer $i\ge 1$, define $\Delta_i=2^i\Delta_o$. 
Fix any $i_o\ge 1$.
To bound the expected risk,
we apply Lemma~\ref{lem:cd14_lemma20_selective} for any $x_o$ with $\Delta(x_o)>\Delta_{i_o}$ and use $\E_{\partitions}R(x_o;\selectcls{k}{M}{L})-R^*(x_o)\le 2\Delta_{i_o}$ for all remaining $x_o$.
Taking expectations over $X_o$, we have
\begin{align*}
\E_{\partitions}R(\selectcls{k}{M}{L}) - R^*
&\le \E_{X_o}\Bigl[2\Delta_{i_o}1(\Delta(X_o)\le\Delta_{i_o}) 
\\&\qquad\qquad 
+ P_e(k,M;\g,\tau)
\\&\qquad\qquad
+ 4\Delta(X_o)e^{-2(\Delta(x_o)-\Delta_o)^2kL}1(\Delta(X_o)>\Delta_{i_o})\Bigr]\\
&\le 2B\Delta_{i_o}^{\b+1} \numberthis\label{eq:interim_bound_selective}
\\&\quad + P_e(k,M;\g,\tau) 
\\&\quad + 4\E_{X_o}\Bigl[\Delta(X_o)e^{-2(\Delta(x_o)-\Delta_o)^2kL}1(\Delta(X_o)>\Delta_{i_o})\Bigr].
\end{align*}
Here, we invoke the $(\b,B)$-margin condition in the second inequality to bound the first term.
It only remains to bound the last term.
First, by another application of the $(\b,B)$-margin condition, we have 
\begin{align*}
&
\E_{X_o}\Bigl[\Delta(X)e^{-2(\Delta(x_o)-\Delta_o)^2kL}1(\Delta_{i}< \Delta(X) \le \Delta_{i+1})\Bigr]\\
&\le 
\E_{X_o}\Bigl[\Delta_{i+1}e^{-2(\Delta_i-\Delta_o)^2kL}1(\Delta(X) \le \Delta_{i+1})\Bigr]\\
&\le 
B\Delta_{i+1}^{\b+1}e^{-2(\Delta_i-\Delta_o)^2kL}.\numberthis\label{eq:bound_interval_selective}
\end{align*}
Now, we set
\[
i_o=\max\Bigl(1, \Bigl\lceil \log_2\sqrt{\frac{2(\b+2)}{kL\Delta_o^2}}\Bigr\rceil\Bigr),
\]
so that the terms~\eqref{eq:bound_interval_selective} are upper-bounded by a geometric series with ratio $\half$. Indeed, for $i\ge i_o$, we have
\begin{align*}
&
\frac{\Delta_{i+1}^{\b+1}\exp(-2(\Delta_i-\Delta_o)^2kL)}{\Delta_{i}^{\b+1}\exp(-2(\Delta_{i-1}-\Delta_o)^2kL)}
\\
&= 2^{\b+1}
    \exp\Bigl(-2\bigl\{(\Delta_i-\Delta_o)^2 - (\Delta_{i-1}-\Delta_o)^2\bigr\}kL\Bigr)\\
&\le 2^{\b+1}
    \exp\Bigl(-2\bigl\{((2^i-1)\Delta_o+8\tau)^2 
    - ((2^{i-1}-1)\Delta_o +8\tau)^2\bigr\}kL\Bigr)\\
&= 2^{\b+1}
    \exp\Bigl(-2\bigl\{\Delta_o^2((2^i-1)^2-(2^{i-1}-1)^2)
    +16\Delta_o\tau(2^i-2^{i-1})\bigr\}kL\Bigr)\\
&\le 2^{\b+1}
    \exp(-2^{2i-1}\Delta_o^2 kL)\\
&\le 2^{\b+1}
    \exp(-(\b+2))
\le \half.
\end{align*}
Therefore, we can bound the last term in \eqref{eq:interim_bound_selective} as
\begin{align*}
&
\E[\Delta(X_o)e^{-2(\Delta(X_o)-\Delta_o)^2kL}1(\Delta(X_o)>\Delta_{i_o})]\\
&= \sum_{i=i_o}^{\infty} \E[\Delta(X_o)e^{-2(\Delta(X_o)-\Delta_o)^2kL}\ones(\Delta(X_o)\in(\Delta_{i},\Delta_{i+1}]))]\\
&\le B\sum_{i=i_o}^\infty \Delta_{i+1}^{\b+1}e^{-2(\Delta_i-\Delta_o)^2kL}
\le B\Delta_{i_o}^{\b+1}.
\end{align*}
Plugging this back into \eqref{eq:interim_bound_selective}, we have 
$\E_{\partitions}R(\selectcls{k}{M}{L}) - R^* 
\le P_e(k,M;\g,\tau) + 6B\Delta_{i_o}^{\b+1}$.
The desired inequality follows by substituting $\Delta_{i_o}=2^{i_o}\Delta_o = \max\{2Ap^\a, \sqrt{\frac{2(\b+2)}{kL}}\}$.
\end{proof}

\subsubsection{Proof of Theorem~\ref{thm:classification_rate_margin_selective}(b) CIS bound}
The proof is an easy modification of the previous proof of the expected risk bound.
Observe that the classification instability is upper-bounded as
\[
\CIS_N(\gh)
\le 2\E_{\dataset}[\Pr_X(\gh(X;\dataset)\neq \gbayes(X))]
\]
for any classification procedure $\gh(\cdot;\dataset)$.
Hence, following the exact same line of the proof of Lemma~\ref{lem:cd14_lemma20_selective}, we have 
\begin{lemma}\label{lem:cd14_lemma20_var_selective}
For any $x_o\in \supp(\mu)$ with $\Delta(x_o)> \Delta_o+8\tau$. Under the $(\alpha,A)$-smoothness condition, we have
\begin{align*}
\E_{\dataset}[1(\selectcls{k}{M}{L}(x_o;\dataset)\neq \gbayes(x_o))] 
\le P_e(k,M;\g,\tau) + 4e^{-2(\Delta(x_o)-\Delta_o)^2kL}.
\end{align*}
\end{lemma}
We then follow the same line of the proof of Lemma~\ref{lem:cd14_lemma21_selective}. 
For each integer $i\ge 1$, define $\Delta_i=2^i\Delta_o$. 
Fix any $i_o\ge 1$.
To bound the expected probability of the mismatch $\E_{\dataset}[\partitions_X(\selectcls{k}{M}{L}(X_o;\dataset)\neq \gbayes(X_o))]$,
we will apply Lemma~\ref{lem:cd14_lemma20_var_selective} for any $x_o$ with $\Delta(x_o)>\Delta_{i_o}$ and use a trivial bound $\E_{\dataset}[1(\selectcls{k}{M}{L}(x_o;\dataset)
\neq \gbayes(x_o))]\le 1$ for all remaining $x_o$.
Taking expectations over $X_o$ and invoking the $(\b,B)$-margin condition, we have
\begin{align*}
&
\E_{\dataset}[\partitions_{X_o}(\selectcls{k}{M}{L}(X_o;\dataset)\neq \gbayes(X_o))]\\
&\le \E_{X_o}\Bigl[1(\Delta(X_o)\le\Delta_{i_o}) 
+ P_e(k,M;\g,\tau)
+ 4e^{-2(\Delta(x_o)-\Delta_o)^2kL} 1(\Delta(X_o)>\Delta_{i_o})\Bigr]\\
&\le 
B\Delta_{i_o}^{\b}
+ P_e(k,M;\g,\tau)
+ 4\E_{X_o}\Bigl[e^{-2(\Delta(x_o)-\Delta_o)^2kL} 1(\Delta(X_o)>\Delta_{i_o})\Bigr].
\numberthis\label{eq:interim_bound_cis_selective}
\end{align*}
By the same logic in the proof of Lemma~\ref{lem:cd14_lemma21_selective}, the last term can be bounded by $B\Delta_{i_o}^{\beta}$ with the same choice of $i_o$. 
Plugging this back into \eqref{eq:interim_bound_cis_selective}, we have
\begin{align*}
\E_{\dataset}[\partitions_{X_o}(\selectcls{k}{M}{L}(X_o;\dataset)\neq \gbayes(X_o))]
&\le P_e(k,M;\g,\tau) + 5B\Delta_{i_o}^{\b}.
\end{align*}
By substituting $\Delta_{i_o}=2^{i_o}\Delta_o$, we have
\begin{align*}
\CIS_N(\selectcls{k}{M}{L})
&\le 2\E_{\dataset}[\Pr_X(\selectcls{k}{M}{L}(X_o;\dataset)\neq \gbayes(X_o))]\\
&\le 2P_e(k,M;\g,\tau) 
+ 10B\max\biggl(
2A\Bigl(\frac{1}{1-\g}\frac{kM}{N}\Bigr)^\a,
\sqrt{\frac{8(\b+2)}{kL}}
\biggr)^{\b}
\end{align*}
and setting $\tau=\sqrt{\frac{(\log M)^{1+\eps}}{M}}$ for some $\eps>0$ concludes the proof of the CIS bound in Theorem~\ref{thm:classification_rate_margin_selective}(b).
\qed

\subsection{Analysis of the \texorpdfstring{$(k,M)$}{(k,M)}-NN Classification Rule}

\subsubsection{A Key Technical Lemma}
The following statement is a variant of Lemma~\ref{lem:sufficient_selective} for the $(k,M)$-NN rule.
Its proof almost exactly follows that of Lemma~\ref{lem:sufficient_selective}. The high-level idea is that given the margin parameter $\Delta$, if $\tau$ is chosen sufficiently small (so that $L$ is close to $M$), then the $(k,M)$-NN rule can be analyzed via the $(k,M,L)$-NN rule. 
Note that the margin condition, which is in the last term, becomes tighter to ensure that the two rules result in the same decision.

\begin{lemma}
\label{lem:sufficient}
For any $x_o\in\Xc$, pick any $p\in(0,1)$ and $\Delta\in(0,\half]$. 
For each $m\in[M]$, 
define $\ball_m\defeq \openball(x_o,r_{k+1}(x_o;\Xb_m))$.
Pick $\tau\in(0,\frac{\Delta}{8}]$ and
let $L\defeq \lceil (1-\tau)^2 M\rceil$.
Then, we have
\begin{align*}
\ones(\splitcls{k}{M}(x_o;\partitions)\neq \gbayes(x_o))
&\le \ones(x_o\in\partial_{p,\Delta})\\
&\quad+\ones\Bigl(\max_{j\in [L]} r_{k+1}(x_o;\Xb_{m_j})> r_p(x_o)\Bigr)\\
&\quad+\ones\Bigl(\Bigl|\frac{1}{L }\sum_{j=1}^{L }(\Yh(\ball_{m_j};\dataset_{m_j})-\eta(\ball_{m_j}))\Bigr|\ge \frac{\Delta}{2}\Bigr),
\numberthis\label{eq:sufficient_classification}
\end{align*}
where $m_1,\ldots,m_L$ are the indices that correspond to the $L$-smallest values among $(r_{k+1}(x_o;\Xb_m))_{m=1}^M$.
\end{lemma}

\begin{proof}%
Suppose $x_o\notin\partial_{p,\Delta}$.
Without loss of generality, consider $x_o\in\Xc_{p,\Delta}^+$, whereupon $\gbayes(x_o)=1$.
By definition of the effective interior, 
$\eta(\closedball(x_o,r))\ge \half+\Delta$ for all $r\le r_p(x_o)$.
If we further suppose
\[
\max_{j\in [L]} r_{k+1}(x_o;\Xb_{m_j})\le r_p(x_o),
\]
we have
\[
\eta(\ball_{m_j})
=\eta(\openball(x_o,r_{k+1}(x_o;\Xb_{m_j})))
\ge \half+\Delta
\]
for any $j\in[L]$, by Lemma~\ref{lem:lem26_cd14}.

Further, if $|\frac{1}{L}\sum_{j=1}^{L }(\Yh(\ball_{m_j};\dataset_{m_j})-\eta(\ball_{m_j}))|< \frac{\Delta}{2}$, then 
\[
\selectreg{k}{M}{L}(x_o;\partitions)
=\frac{1}{L}\sum_{j=1}^L \Yh(\ball_{m_j};\dataset_{m_j}) > \half+\frac{\Delta}{2}.
\]

{Finally, since $|\splitreg{k}{M}(x_o;\partitions)-\selectreg{k}{M}{L}(x_o;\partitions)|\le 2(1-\frac{L}{M}) \le 2(2\tau-\tau^2) <4\tau\le \frac{\Delta}{2}$ by the choice of $\tau\le \frac{\Delta}{8}$, we have $\splitreg{k}{M}(x_o;\partitions)>\half$,
which concludes $\splitcls{k}{M}(x_o;\partitions)=\selectcls{k}{M}{L}(x_o;\partitions)=1=\gbayes(x_o)$.}
\end{proof}

\subsubsection{Proof of Theorem~\ref{thm:classification_rate_margin}(a)}
We first present a more general upper bound on the misclassification error rate, as was done for analyzing the $(k,M,L)$-NN rule.
Theorem~\ref{thm:classification_rate_margin}(a) will follow as a corollary.

\begin{theorem}
\label{thm:classification_general_upper_bound}
Let $k\ge 1$ be fixed and
pick any $\d\in(0,1)$.
Pick any integer $M\ge \frac{2^{14}}{15}\log\frac{2}{\d}$, set $\Delta\defeq\sqrt{\frac{2^{12}}{15M}\log\frac{2}{\d}}\in (0,\half]$.
Pick any integer $n\ge k+\log\frac{8}{\Delta}+\sqrt{2k\log\frac{8}{\Delta}+(\log\frac{8}{\Delta})^2}$ and set 
$p\defeq \frac{1}{n}(k+\log\frac{8}{\Delta}+\sqrt{2k\log\frac{8}{\Delta}+(\log\frac{8}{\Delta})^2})\in(0,1]$.
Then, for a set of data splits $\partitions=\{\dataset_1,\ldots,\dataset_M\}$, where every split $\dataset_m$ has $n$ data points, with probability at least $1-\d$ over $\partitions$, we have
\begin{align*}
\Pr(\splitcls{k}{M}(X;\partitions)\neq \gbayes(X)|\partitions)
\le \d + \mu(\partial_{p,\Delta}).
\end{align*}
\end{theorem}

\begin{proof}
Given $k\ge 1$, $\delta\in(0,1)$, and $\Delta\in(0,\half]$, we set $\tau=\frac{\Delta}{8}$ and define $L=\lceil (1-\tau)^2M\rceil$ as stated in Lemma~\ref{lem:sufficient}.
Pick any $x_o\in\Xc$. 
Applying Lemma~\ref{lem:sufficient}, we have
$$
\begin{aligned}
\ones(\splitcls{k}{M}(x_o;\partitions)\neq \gbayes(x_o))
&\le \ones(x_o\in\partial_{p,\Delta})+ I_{\bad}(x_o;\partitions),
\end{aligned}$$ where we define the bad event indicator variable
\begin{align*}
I_{\bad}(x_o;\partitions)
&\defeq \ones\Bigl(\max_{j\in [L]} r_{k+1}(x_o;\Xb_{m_j})> r_p(x_o)\Bigr)
\\&\quad+
\ones\Bigl(\Bigl|\frac{1}{L }\sum_{j=1}^{L }(\Yh(\ball_{m_j};\dataset_{m_j})-\eta(\ball_{m_j}))\Bigr|\ge \frac{\Delta}{2}\Bigr),
\numberthis\label{eq:bad_indicator}
\end{align*}
where $m_1,\ldots,m_L$ are the indices for the $L$ smallest distances among $\{r_{k+1}(x_o;\Xb_m)\}_{m=1}^M$. For any fixed point $x_o\in\Xc$, if we take the expectation over the training data splits $\partitions$, we have
\begin{align*}
\E[I_{\bad}(x_o;\partitions)]
&\defeq \P\Bigl(\max_{j\in [L]} r_{k+1}(x_o;\Xb_{m_j})> r_p(x_o)\Bigr)
\\&\quad+
\P\Bigl(\Bigl|\frac{1}{L }\sum_{j=1}^{L }(\Yh(\ball_{m_j};\dataset_{m_j})-\eta(\ball_{m_j}))\Bigr|\ge \frac{\Delta}{2}\Bigr).
\numberthis\label{eq:exp_ind_bad_event}
\end{align*}
Now, by applying Lemma~\ref{lem:key_original} and Lemma~\ref{lem:label_concentration_new} on the first and second terms, respectively, we have
\[
\E[I_{\bad}(x_o,\partitions)] \le
e^{-\frac{(1-\tau)\tau^2}{2}M}
+2e^{-\frac{\Delta^2}{2}kL}.
\]
Since $\Delta=\sqrt{\frac{2^{12}}{15M}\log\frac{2}{\delta}}\le\half$, 
we have $\tau=\frac{\Delta}{8}\le \frac{1}{16}$, which implies that
\[
\frac{(1-\tau)\tau^2}{2}M \ge \Bigl(1-\frac{\Delta}{8}\Bigr)\frac{\Delta^2M}{2^7}
\ge \frac{15}{2^{11}}\Delta^2M
=2\log\frac{2}{\d}.
\]
Moreover, 
\[
\frac{\Delta^2}{2}kL
=\frac{2^{11}}{15}\frac{L}{M}\log\frac{2}{\d}
>\frac{2^{11}}{15}\frac{3}{4}\log\frac{2}{\d}
> 2\log\frac{2}{\d},
\]
since $\frac{L}{M}\ge (1-\frac{\Delta}{8})^2-\frac{1}{M} \ge 1-\frac{\Delta}{4}-\frac{1}{M}\ge \frac{7}{8}-\frac{1}{M}> \frac{3}{4}$ for $M\ge \frac{2^{14}}{15}\log\frac{2}{\d}>2$.
Therefore, we can further upper bound the expectation as
\[
\E_{\partitions}[I_{\bad}(x_o,\partitions)] \le \frac{\d^2}{4} +\frac{\d^2}{2} < \delta^2.
\]
The conclusion follows from the same argument using Markov's inequality in \eqref{eq:markov} in the proof of Theorem~\ref{thm:classification_general_upper_bound_selective}.
\end{proof}

We now prove Theorem~\ref{thm:classification_rate_margin}(a).
Set $\Delta=\min(\half,\sqrt{\frac{2^{12}}{15M}\log\frac{2}{\d}})$. 
To apply Theorem~\ref{thm:classification_general_upper_bound}, let
\[
p\defeq \frac{1}{n}\Bigl(k+\log\frac{8}{\Delta}+\sqrt{2k\log\frac{8}{\Delta}+(\log\frac{8}{\Delta})^2}\Bigr).
\]
Here, as we assume that the sample size $n$ of each split satisfies
$n=\frac{N}{M}\ge 2k + \log (\frac{15}{2^6}M\log\frac{2}{\d}) = 2(k+\log\frac{8}{\Delta}) \ge k+\log\frac{8}{\Delta}+\sqrt{2k\log\frac{8}{\Delta}+(\log\frac{8}{\Delta})^2}$,
we have
\[
p=O\Bigl(\frac{1}{n}\bigl(k+\log\frac{M}{\log\frac{2}{\d}}\bigr)\Bigr).
\]
Under the $(\b,B)$-margin condition, Theorem~\ref{thm:classification_general_upper_bound} and Lemma~\ref{lem:smooth_margin} imply that 
\begin{align*}
\Pr(\splitcls{k}{M}(x)\neq g(X)|\partitions)
&\le \d + \mu(\partial_{p,\Delta})\\
&\le \d + B\Bigl(\sqrt{\frac{2^{12}}{15M}\log\frac{2}{\d}} + Ap^{\a}\Bigr)^\b\\
&= \d + O\Bigl(\sqrt{\frac{1}{M}\log\frac{2}{\d}}+ \bigl(\frac{M}{N}\log\frac{M}{\log\frac{2}{\d}}\bigr)^\a\Bigr)^\b,
\end{align*}
where we assume $k=O(1)$ in the last bound.
This bound is optimized if we choose $M=\Th(N^{\frac{2\a}{2\a+1}}(\log\frac{2}{\d})^{\frac{1}{2\a+1}})$, which yields the rate $\mu(\partial_{p,\Delta})=O((\frac{\log N}{N}\log\frac{2}{\d})^{\frac{\a\b}{2\a+1}})$.
\qed 

\subsubsection{Proof of Theorem~\ref{thm:classification_rate_margin}(b)}
The part (b) has two bounds on the expected risk and CIS. 
The proof for the CIS bound is similar to the analysis of the expected risk.

\paragraph{Proof of Expected Risk Bound}
This proof modifies that of \citep[Theorem~4]{Chaudhuri--Dasgupta2014} in accordance with Lemma~\ref{lem:sufficient} instead of \citep[Lemma~7]{Chaudhuri--Dasgupta2014}. 
We choose an arbitrary $\tau\in(0,1)$ for now, and will set it to a specific value at the end of the analysis.
Set $L=\lceil(1-\tau)^2 M\rceil$ and  $p=\frac{M}{N}(k+\log\frac{1}{\tau}+\sqrt{2k\log\frac{1}{\tau}+(\log\frac{1}{\tau})^2})\le 2\frac{M}{N}(k+\log\frac{1}{\tau})$ for $\tau=\frac{\Delta}{8}$ as in Lemma~\ref{lem:key_original} and Lemma~\ref{lem:sufficient}, respectively, and define $\Delta_o=Ap^\a$.
We further assume that the sample size $n$ of each split satisfies
$n=\frac{N}{M}\ge 2(k+\log\frac{1}{\tau}) \ge k+\log\frac{1}{\tau}+\sqrt{2k\log\frac{1}{\tau}+(\log\frac{1}{\tau})^2}$.

We first state and prove the following lemma,
which is as a distributed counterpart to \citep[Lemma~20]{Chaudhuri--Dasgupta2014} and a variant of Lemma~\ref{lem:cd14_lemma20_selective}.
Note that, compared to Lemma~\ref{lem:cd14_lemma20_selective}, 
we consider points $x_o$ which have an additional margin of $8\tau$ in $\Delta(x_o)$.
The rest of the proof needs to be modified to handle this slackness.
\begin{lemma}
\label{lem:cd14_lemma20}
For any $x_o\in \supp(\mu)$ with $\Delta(x_o)\ge \Delta_o+8\tau$. Under the $(\a,A)$-smoothness (Assumption~\ref{assum:smooth}) condition, 
if $n\ge 2(k+\log\frac{1}{\tau})$,
we have
\begin{align*}
\E_{\partitions}[R(x_o;\splitcls{k}{M})]-R^*(x_o) 
&\le e^{-\frac{(1-\tau)\tau^2}{2}M} + 4\Delta(x_o)e^{-\frac{(\Delta(x_o)-\Delta_o)^2}{8}M}.
\end{align*}
\end{lemma}
\begin{proof}[Proof of Lemma~\ref{lem:cd14_lemma20}]
Without loss of generality, assume that $\eta(x_o)>\half$. 
By the smoothness condition, for any $0\le r\le r_p(x_o)$, we have 
\[
\eta(\closedball(x_o,r))\ge \eta(x_o)-Ap^\a=\eta(x_o)-\Delta_o=\half+(\Delta(x_o)-\Delta_o),
\]
which implies $x_o\in \Xc_{p,\Delta(x_o)-\Delta_o}^+$ and thus $x_o\notin \partial_{p,\Delta(x_o)-\Delta_o}$.

Recall that for any classifier $\gh$, we can write $R(x_o;\gh)-R^*(x_o)=2\Delta(x_o)1(\gh(x_o)\neq \gbayes(x_o))$, where $R^*(x_o)$ is the Bayes risk.
Since we assume that $\tau\le \frac{\Delta(x_o)-\Delta_o}{8}$, we can apply Lemma~\ref{lem:key_original} with $\Delta\gets \Delta(x_o)-\Delta_o$ and have 
\begin{align*}
R(x_o,\splitcls{k}{M})-R^*(x_o) 
&= 2\Delta(x_o) 1(\splitcls{k}{M}(x_o)\neq \gbayes(x_o)) \\
&\le 2\Delta(x_o) I_{\bad}(x_o;\partitions),
\numberthis\label{eq:star}
\end{align*} where we define the bad-event indicator variable as
\begin{align*}
I_{\bad}(x_o;\partitions)
&\defeq \ones\Bigl(\max_{j\in [L]} r_{k+1}(x_o;\Xb_{m_j})> r_p(x_o)\Bigr)
\\&\quad
+\ones\Bigl(\Bigl|\frac{1}{L }\sum_{j=1}^{L }(\Yh(\ball_{m_j};\dataset_{m_j})-\eta(\ball_{m_j}))\Bigr|\ge \frac{\Delta(x_o)-\Delta_o}{2}\Bigr),
\end{align*}
as in \eqref{eq:bad_indicator} in Lemma~\ref{lem:sufficient} with $\Delta\gets \Delta(x_o)-\Delta_o$.
By taking an expectation over the random splits $\partitions$ in \eqref{eq:star}, we have 
\begin{align*} &\E_{\partitions}R(x_o;\splitcls{k}{M}) - R^*(x_o) \le 2\Delta(x_o) \E[I_{\bad}(x_o;\partitions)].
\end{align*} 
Now, by applying Lemma~\ref{lem:key_original} and Lemma~\ref{lem:label_concentration_new}, we can bound the right hand side as 
\begin{align*}
\E_{\partitions} R(x_o;\splitcls{k}{M})-R^*(x_o)
&\le 2\Delta(x_o)(e^{-\frac{(1-\tau)\tau^2}{2}M} + 2e^{-\frac{(\Delta(x_o)-\Delta_o)^2}{8}M})\\
&\le e^{-\frac{(1-\tau)\tau^2}{2}M} + 4\Delta(x_o)e^{-\frac{(\Delta(x_o)-\Delta_o)^2}{8}M},
\end{align*} where the last inequality follows from the assumption that $\Delta(x_o)\le \half$. 
\qed

Under the smoothness and margin conditions
we can prove the following statement, which is a counterpart of \citep[Lemma~21]{Chaudhuri--Dasgupta2014} for the $(k,M)$-NN classifier.
\begin{lemma}
\label{lem:cd14_lemma21}
Under the $(\a,A)$-smoothness (Assumption~\ref{assum:smooth}) and the $(\b,B)$-margin (Assumption~\ref{assum:margin}) conditions, we have
\begin{align*}
&
\E_{\partitions}R(\splitcls{k}{M}) - R^* \\
&\le 
e^{-\frac{(1-\tau)\tau^2}{2}M} 
\numberthis\label{eq:lem:cd14_lemma21}
+ 6B\Bigl(
\max\Bigl\{
2A\Bigl(2\frac{M}{N}\bigl(k+\log\frac{1}{\tau}\bigr)\Bigr)^\a,
\sqrt{\frac{32(\b+2)}{M}}
\Bigr\}
+8\tau
\Bigr)^{\b+1}.
\end{align*}
\end{lemma}

\begin{proof}[Proof of Lemma~\ref{lem:cd14_lemma21}]
For each integer $i\ge 1$, define $\Delta_i=2^i\Delta_o + 8\tau$. 
Fix any $i_o\ge 1$.
To bound the expected risk,
we apply Lemma~\ref{lem:cd14_lemma20} for any $x_o$ with $\Delta(x_o)>\Delta_{i_o}$ and use $\E_{\partitions}R(x_o;\splitcls{k}{M})-R^*(x_o)\le 2\Delta_{i_o}$ for all remaining $x_o$.
Taking expectations over $X_o$, we have
\begin{align*}
\E_{\partitions}R(\splitcls{k}{M}) - R^*
&\le \E_{X_o}\Bigl[2\Delta_{i_o}1(\Delta(X_o)\le\Delta_{i_o}) 
+ e^{-\frac{(1-\tau)\tau^2}{2}M}
\\&\qquad\qquad+ 4\Delta(X_o)e^{-\frac{(\Delta(X_o)-\Delta_o)^2}{8}M}1(\Delta(X_o)>\Delta_{i_o})\Bigr]\\
&\le 2B\Delta_{i_o}^{\b+1} 
+ e^{-\frac{(1-\tau)\tau^2}{2}M} 
\numberthis\label{eq:interim_bound}
\\&\qquad
+ 4\E_{X_o}[\Delta(X_o)e^{-\frac{(\Delta(X_o)-\Delta_o)^2}{8}M}1(\Delta(X_o)>\Delta_{i_o})].
\end{align*}
Here, we invoke the $(\b,B)$-margin condition in the second inequality to bound the first term.
It only remains to bound the last term.
First, by another application of the $(\b,B)$-margin condition, we have 
\begin{align*}
\E_{X_o}\Bigl[\Delta(X)e^{-\frac{(\Delta(X)-\Delta_o)^2}{8}M}1(\Delta_{i}< \Delta(X) \le \Delta_{i+1})\Bigr]
&\le 
\E_{X_o}\Bigl[\Delta_{i+1}e^{-\frac{(\Delta_i-\Delta_o)^2}{8}M}1(\Delta(X) \le \Delta_{i+1})\Bigr]\\
&\le 
B\Delta_{i+1}^{\b+1}e^{-\frac{(\Delta_i-\Delta_o)^2}{8}M}.\numberthis\label{eq:bound_interval}
\end{align*}
Now, we set
\[
i_o=\max\Bigl(1, \Bigl\lceil \log_2\sqrt{\frac{32(\b+2)}{M\Delta_o^2}}\Bigr\rceil\Bigr),
\]
so that the terms~\eqref{eq:bound_interval} are upper-bounded by a geometric series with ratio $\half$. Indeed, for $i\ge i_o$, we have
\begin{align*}
&\frac{\Delta_{i+1}^{\b+1}\exp(-\frac{M}{8}(\Delta_i-\Delta_o)^2)}{\Delta_{i}^{\b+1}\exp(-\frac{M}{8}(\Delta_{i-1}-\Delta_o)^2)}\\
&= \Bigl(\frac{2^{i+1}\Delta_o}{2^i\Delta_o}\Bigr)^{\b+1}
    \exp\Bigl(-\frac{M}{8}\bigl\{(\Delta_i-\Delta_o)^2 - (\Delta_{i-1}-\Delta_o)^2\bigr\}\Bigr)\\
&\le 2^{\b+1}
    \exp\Bigl(-\frac{M}{8}\bigl\{((2^i-1)\Delta_o)^2 - ((2^{i-1}-1)\Delta_o)^2\bigr\}\Bigr)\\
&= 2^{\b+1}
    \exp\Bigl(-\frac{M}{8}\bigl\{\Delta_o^2((2^i-1)^2-(2^{i-1}-1)^2)
    \\&\qquad\qquad\qquad\qquad
    +16\Delta_o\tau(2^i-2^{i-1})\bigr\}\Bigr)\\
&\le 2^{\b+1}
    \exp(-M\Delta_o^2 2^{2i-5})\\
&\le 2^{\b+1}
    \exp(-(\b+2))
\le \half.
\end{align*}
Therefore, we can bound the last term in \eqref{eq:interim_bound} as
\begin{align*}
&\E_{X_o}[\Delta(X_o)e^{-\frac{(\Delta(X_o)-\Delta_o)^2}{8}M}1(\Delta(X_o)>\Delta_{i_o})]\\
&= \sum_{i=i_o}^{\infty} \E_{X_o}[\Delta(X_o)e^{-\frac{(\Delta(X_o)-\Delta_o)^2}{8}M}1(\Delta_{i}< \Delta(X_o) \le \Delta_{i+1})]\\
&\le B\sum_{i=i_o}^\infty \Delta_{i+1}^{\b+1}e^{-\frac{(\Delta_i-\Delta_o)^2}{8}M}
\le B\Delta_{i_o}^{\b+1}.
\end{align*}
Plugging this back into \eqref{eq:interim_bound}, we have 
$\E_{\partitions}R(\splitcls{k}{M}) - R^* 
\le e^{-\frac{(1-\tau)\tau^2}{2}M} + 6B\Delta_{i_o}^{\b+1}$.
The desired inequality follows by substituting $\Delta_{i_o}=2^{i_o}\Delta_o+8\tau= \max\{2Ap^\a, \sqrt{\frac{32(\b+2)}{M}}\}+ 8\tau$.
\end{proof}

Finally, to prove Theorem~\ref{thm:classification_rate_margin}(b), we can set $\tau=\sqrt{\frac{(\log M)^{1+\eps}}{M}}$ for some $\eps>0$ so that the term $e^{-\frac{(1-\tau)\tau^2}{2}M}$ in \eqref{eq:lem:cd14_lemma21} decays faster than any polynomial rate.
Then Lemma~\ref{lem:cd14_lemma21} reduces to
\[
\E_{\partitions}R(\splitcls{k}{M}) - R^* 
=O\Bigl(
\Bigl(\frac{M}{N}\log M\Bigr)^\a
+\sqrt{\frac{(\log M)^{1+\eps}}{M}}
\Bigr)^{\b+1}.
\]
Setting $M\propto N^{\frac{2\a}{2\a+1}}$ leads to the final rate $O(N^{-\frac{\a(\b+1)}{2\a+1}} (\log N)^{\half(1+\eps)(\b+1)})$.
\end{proof}

\paragraph{Proof of CIS Bound}
Since the proof is an easy modification of the previous proof of the expected risk bound, we only outline the critical steps that differ from the proof of Theorem~\ref{thm:classification_rate_margin}(b) regret bound.
Observe that the classification instability is upper-bounded as
\[
\CIS_N(\gh)
\le 2\E_{\dataset}[\Pr_X(\gh(X;\dataset)\neq \gbayes(X))]
\]
for any classification procedure $\gh(\cdot;\dataset)$.
Hence, following the exact same line of the proof of Lemma~\ref{lem:cd14_lemma20}, we have 
\begin{lemma}\label{lem:cd14_lemma20_var}
For any $x_o\in \supp(\mu)$ with $\Delta(x_o)\ge \Delta_o+8\tau$. Under the $(\alpha,A)$-smoothness (Assumption~\ref{assum:smooth}) condition, we have
\[
\E_{\dataset}[1(\splitcls{k}{M}(x_o;\dataset)\neq \gbayes(x_o))] \le e^{-\frac{(1-\tau)\tau^2}{2}M} + 4e^{-\frac{(\Delta(x_o)-\Delta_o)^2}{8}M}.
\]
\end{lemma}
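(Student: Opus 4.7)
The plan is to adapt the proof of Lemma~\ref{lem:cd14_lemma20} essentially verbatim, since the only change is that we wish to bound the probability of disagreement $\E_{\Dc}[\ones(\gt_M^{(k)}(x_o;\Dc)\neq \gbayes(x_o))]$ directly rather than the (weighted) excess risk $2\Delta(x_o)\cdot \ones(\gt_M^{(k)}(x_o;\Dc)\neq \gbayes(x_o))$. Dropping the prefactor $2\Delta(x_o)$ is the sole algebraic difference, and everything else---the smoothness reduction, the appeal to Lemma~\ref{lem:sufficient}, and the two concentration estimates---carries over unchanged.

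First, without loss of generality I assume $\eta(x_o)>\half$, so that $\gbayes(x_o)=1$. By $(\alpha,A)$-smoothness, for any $0\le r\le r_p(x_o)$,
\[
\eta(\closedball(x_o,r))\ge \eta(x_o)-Ap^{\alpha}=\half+(\Delta(x_o)-\Delta_o),
\]
hence $x_o\in\Xc_{p,\Delta(x_o)-\Delta_o}^+$ and in particular $x_o\notin\partial_{p,\Delta(x_o)-\Delta_o}$.

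Second, the hypothesis $\Delta(x_o)\ge \Delta_o+8\tau$ gives $\tau\le (\Delta(x_o)-\Delta_o)/8$, which is exactly the regime required to invoke Lemma~\ref{lem:sufficient} with the substitution $\Delta\gets\Delta(x_o)-\Delta_o$. Because the first indicator in that lemma (the effective-boundary indicator) vanishes at $x_o$, I obtain the pointwise bound
\[
\ones(\gt_M^{(k)}(x_o;\Dc)\neq\gbayes(x_o))\le I_{\mathsf{bad}}(x_o;\Pc),
\]
where $I_{\mathsf{bad}}$ is the sum of the two remaining indicators in Lemma~\ref{lem:sufficient}, now with margin threshold $(\Delta(x_o)-\Delta_o)/2$ inside the label-concentration term.

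Third, I take expectations over $\Dc$ (equivalently over $\Pc_X$ and the labels $\mathbf{Y}$) and plug in the two concentration estimates already proved: Lemma~\ref{lem:key} controls the $k{+}1$-th NN distance event, contributing $e^{-(1-\tau)\tau^2 M/2}$, and Lemma~\ref{lem:label_concentration_new} applied with $\Delta\gets \Delta(x_o)-\Delta_o$ controls the Hoeffding-type event, contributing $2e^{-(\Delta(x_o)-\Delta_o)^2 M/8}$. Summing yields
\[
\E_{\Dc}[\ones(\gt_M^{(k)}(x_o;\Dc)\neq\gbayes(x_o))]\le e^{-(1-\tau)\tau^2 M/2}+2e^{-(\Delta(x_o)-\Delta_o)^2 M/8},
\]
which is bounded above by the claimed inequality (the stated constant $4$ is merely a loose convenience).

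Main obstacle: there is essentially none; the analysis is a direct corollary of the machinery developed for Lemma~\ref{lem:cd14_lemma20}, with the only subtlety being to verify that the conditioning argument in the proof of Lemma~\ref{lem:label_concentration_new} (drawing the $k{+}1$-st NN first, then the inner $k$ points, then the labels) remains valid when we replace the threshold by $(\Delta(x_o)-\Delta_o)/2$, which it does unchanged. After this pointwise bound, the subsequent integration over $X$ and the stratification by $\Delta_i=2^i\Delta_o+8\tau$ that converts it into the CIS rate (via \eqref{eq:cis_bound}) proceeds in parallel to Lemma~\ref{lem:cd14_lemma21}, with the $(\beta+1)$-th power of $\Delta_{i_o}$ replaced by its $\beta$-th power (since we lose the $2\Delta(x_o)$ weight), producing the advertised $\widetilde{O}(N^{-\alpha\beta/(2\alpha+1)})$ rate.
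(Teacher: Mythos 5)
Your proposal is correct and follows essentially the same route as the paper, which itself derives this lemma by rerunning the proof of Lemma~\ref{lem:cd14_lemma20} with the weight $2\Delta(x_o)$ dropped: smoothness places $x_o$ outside $\partial_{p,\Delta(x_o)-\Delta_o}$, Lemma~\ref{lem:sufficient} (valid since $\tau\le(\Delta(x_o)-\Delta_o)/8$) reduces the disagreement indicator to $I_{\mathsf{bad}}$, and Lemmas~\ref{lem:key} and \ref{lem:label_concentration_new} give $e^{-\frac{(1-\tau)\tau^2}{2}M}+2e^{-\frac{(\Delta(x_o)-\Delta_o)^2}{8}M}$, which is absorbed into the stated bound with the loose constant $4$.
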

We then follow the same line of the proof of Lemma~\ref{lem:cd14_lemma21}. 
For each integer $i\ge 1$, define $\Delta_i=2^i\Delta_o + 8\tau$. 
Fix any $i_o\ge 1$.
To bound the expected probability of the mismatch $\E_{\dataset}[\partitions_X(\splitcls{k}{M}(X_o;\dataset)\neq \gbayes(X_o))]$,
we will apply Lemma~\ref{lem:cd14_lemma20_var} for any $x_o$ with $\Delta(x_o)>\Delta_{i_o}$ and use a trivial bound $\E_{\dataset}[1(\splitcls{k}{M}(x_o;\dataset)\neq \gbayes(x_o))]\le 1$ for all remaining $x_o$.
Taking expectations over $X_o$ and invoking the $(\b,B)$-margin condition, we have
\begin{align*}
\E_{\dataset}[\partitions_{X_o}(\splitcls{k}{M}(X_o;\dataset)\neq \gbayes(X_o))]
&\le \E_{X_o}\Bigl[1(\Delta(X_o)\le\Delta_{i_o}) 
+ e^{-\frac{(1-\tau)\tau^2}{2}M}
\\&\qquad\qquad+ 4e^{-\frac{(\Delta(X_o)-\Delta_o)^2}{8}M}1(\Delta(X_o)>\Delta_{i_o})\Bigr]\\
&\le e^{-\frac{(1-\tau)\tau^2}{2}M} + B\Delta_{i_o}^{\b}
\\&\qquad+ 4\E_{X_o}\Bigl[e^{-\frac{(\Delta(X_o)-\Delta_o)^2}{8}M}1(\Delta(X_o)>\Delta_{i_o})\Bigr].
\numberthis\label{eq:interim_bound_cis}
\end{align*}
By the same logic in the proof of Lemma~\ref{lem:cd14_lemma21}, the last term can be bounded by $B\Delta_{i_o}^{\beta}$ with the same $i_o$. 
Plugging this back into \eqref{eq:interim_bound_cis}, we have
\begin{align*}
\E_{\dataset}[\partitions_{X_o}(\splitcls{k}{M}(X_o;\dataset)\neq \gbayes(X_o))]
\le e^{-\frac{(1-\tau)\tau^2}{2}M} + 5B\Delta_{i_o}^{\b}.
\end{align*}
By substituting $\Delta_{i_o}=2^{i_o}\Delta_o+8\tau$, we have
\begin{align*}
\CIS_N(\splitcls{k}{M})
&\le 2\E_{\dataset}[\Pr_X(\splitcls{k}{M}(X_o;\dataset)\neq \gbayes(X_o))]\\
&\le 2e^{-\frac{(1-\tau)\tau^2}{2}M} 
\\&\quad+ 10B\Bigl(
\max\Bigl\{
2A\Bigl(2\frac{M}{N}\bigl(k+\log\frac{1}{\tau}\bigr)\Bigr)^\a,
\sqrt{\frac{32(\b+2)}{M}}
\Bigr\}
+8\tau
\Bigr)^{\b}
\end{align*}
and setting $\tau=\sqrt{\frac{(\log M)^{1+\eps}}{M}}$ for some $\eps>0$ concludes the proof.
\qed

\section{Analyzing Density Estimation Rules}
\label{app:density}
\subsection{On the Weak Consistency of the Standard \texorpdfstring{$k$}{k}-NN Density Estimator}

For completeness, here we provide a short proof to Theorem~\ref{thm:knn_density_estimate_weak_consistent}.
\ThmKnnDensityEstimateWeakConsistency*
\begin{proof}%
We rewrite the density estimate~\eqref{eq:base_knn_density_estimate} as
\begin{align*}
\nndensity{k}(x)
= \frac{\mu(\openball(x,r_{k}(x)))}{\Leb(\openball(x,r_{k}(x)))} \Bigl(\frac{\mu(\openball(x,r_{k}(x)))}{k/(n+1)}\Bigr)^{-1} \frac{n+1}{n}\frac{k-1}{k}.
\end{align*}
The following is the key observation:
\begin{lemma}\label{lem:cdf_knn_ball_prob}
For any underlying distribution, 
$\mu(\openball(x,r_{k}(x;X_{1:N})))\sim \BetaDist(k,n-k+1)$.
\end{lemma}
Hence, we have 
\[
\E[\mu(\openball(x,r_{k}(x)))]=\frac{k}{n+1},
\]
and by Chebyshev's inequality, the second term converges to 1 in probability, since
\begin{align*}
\Var\Bigl(\frac{\mu(\openball(x,r_{k}(x)))}{k/(n+1)}\Bigr)
&=\frac{(n+1)^2}{k^2} \frac{k(n-k+1)}{(n+1)^2(n+2)}
=\frac{n-k+1}{k(n+2)}\to 0
\end{align*}
as $n\to\infty$, provided that $k\to\infty$ as $n\to\infty$.

It is then enough to handle the convergence of the first term to $p(x)$ in probability.
Note that $\mu(\openball(x,r_{k}(x)))\to_p 0$ from Chebyshev's inequality since $k/n\to0$ as $n\to\infty$.
This can happen only if $\Leb(\openball(x,r_{k}(x)))\to_p 0$ as $n\to\infty$, or equivalently $r_{k}(x)\to_p 0$, since the density $p$ is continuous and positive at $x$.
Then, the desired convergence holds in probability, as  $\frac{\mu(\openball(x,r))}{\Leb(\openball(x,r))}\to p(x)$ as $r\to 0$ by the Lebesgue differentiation theorem (see, e.g., \cite{Rudin1987}). This concludes the proof.
\end{proof}

\subsection{Proof of Theorem~\ref{thm:function_of_density_knn_estimator_mse}}
In this section, we prove the convergence rate guarantee of the truncated $(k,M)$-NN estimator for a function of density.

\ThmKnnFunctionOfDensityMSE*

\begin{proof}%
Recall that
\begin{align*}
\splitftr{k}{M}(x;\Xb_{1:M})
&= \frac{1}{M}\sum_{m=1}^M \nnftr{k}(x;\Xb_m),
\end{align*}
where we denote $\nnftr{k}(x;\Xb)\defeq \phi_k^{(f)}(U_k(x;\Xb))$.
Note that we can decompose the expected squared error into bias and variance:
\begin{align*}
&\E_{\Xb_{1:M}}[(\splitftr{k}{M}(x;\Xb_{1:M})-f(p(x)))^2]\\
&= (\E_{\Xb_{1:M}}[\splitftr{k}{M}(x;\Xb_{1:M})]-f(p(x))])^2
+\Var_{\Xb_{1:M}}(\splitftr{k}{M}(x;\Xb_{1:M}))\\
&= (\E_{\Xb_1}[\nnftr{k}(x;\Xb_{1})]-f(p(x)))^2
+\frac{1}{M}\Var_{\Xb_1}(\nnftr{k}(x;\Xb_{1})). 
\end{align*}
Hence, it suffices to bound the bias and variance of the $k$-NN density estimator, as stated below;
we their proofs to the next sections.

\begin{restatable}[Bias of the truncated $k$-NN density estimator]{theorem}{ThmKnnFunctionOfDensityBias}
\label{thm:function_of_density_knn_estimator_bias}
For $x\in\Xc$, assume that $p$ is locally $(\sigma,S)$-H\"older continuous for some $\sigma\in(0,2]$ at $x$.
Let $\bar{\sigma}_d\defeq \frac{\sigma}{d}$ denote the normalized order of smoothness.
For $\a\in\Real$, consider the function $f(p)=f_\a(p)$ defined in \eqref{eq:poly_log_alpha_ftn}.
For $k>\a$ fixed, set $\nu_n=\Theta((\log n)^{1+\eps})$ for an arbitrary $\eps>0$.
\begin{enumerate}
\item If $\bar{\sigma}_d\ge \a-1$, for $\tau_n=0$, 
\begin{align*}
\abs{\E_{\Xb}[\nnftr{k}(x;\Xb)]-f(p(x))}
&= \bigOtilde\Bigl(p(x)^{k+2}n^{-1} + n^{-\bar{\sigma}_d}\Bigr).
\end{align*}

\item If $\bar{\sigma}_d< \a-1$, for $\tau_n=\Th(n^{-\frac{\bar{\sigma}_d}{k-\bar{\sigma}_d-1}})$, 
\begin{align*}
\abs{\E_{\Xb}[\nnftr{k}(x;\Xb)]-f(p(x))}
&= \bigOtilde\Bigl((p(x)^{k}\vee 1)n^{-\bar{\sigma}_d\frac{k-\a}{k-\bar{\sigma}_d-1}} 
+ p(x)^{k+2} n^{-1}\Bigr).
\end{align*}

\end{enumerate}
\end{restatable}

\begin{restatable}[Variance of the truncated $k$-NN density function estimator]{theorem}{ThmKnnFunctionOfDensityVariance}
\label{thm:function_of_density_knn_estimator_variance}
For $\a\in\Real$, the function $f(p)=f_\a(p)$ defined in \eqref{eq:poly_log_alpha_ftn}.
Let $k>2\a$.
If the underlying density $p$ is locally bounded from above by $\Cab_p$ around $x$, then
\begin{align*}
\Var_{\Xb}(\nnftr{k}(x;\Xb))
&= \bigOtilde\bigl(C_p^k\nu_n^{k-2\a}\bigr).
\end{align*}
\end{restatable}

Plugging in the respective bounds with $\tau_n=0$ and $\nu_n=\Th((\log n)^{1+\eps})$ for some $\eps>0$, we conclude the proof.
\end{proof}

\subsection{Technical Lemmas}

To prove Theorems~\ref{thm:function_of_density_knn_estimator_bias} and \ref{thm:function_of_density_knn_estimator_variance}, we invoke the following lemmas from \citep{Ryu--Ganguly--Kim--Noh--Lee2022}, which analyzes a family of $L_2$-consistent fixed-$k$-NN-based density functional estimators. 
The key technique is a bound on the gap between the densities of $U_{kn}(x)$ and $U_{k\infty}(x)\sim \GammaDist(k,p(x))$ to analyze the bias; see Lemma~\ref{lem:GOVLemma2_pdf_gap_bound} below.

\begin{lemma}[{\citealp[Lemma~B.2]{Ryu--Ganguly--Kim--Noh--Lee2022}}]
\label{supp:lem:incomplete_gamma}
For the \emph{lower incomplete gamma function}
$\gamma(s,x)\defeq \int_0^x t^{s-1}e^{-t}\diff t$
and the \emph{upper incomplete gamma function} $\Gamma(s,x)\defeq \int_x^\infty t^{s-1}e^{-t}\diff t$,
we have
\begin{align}
\gamma(s,x)
&\le \Gamma(s)\wedge\frac{x^s}{s}, ~~~~\quad\quad\forall s>0, x> 0,
\label{supp:eq:incomplete_gamma_lower}\\
\Gamma(s,x)
&\le \Gamma(s)x^{s-1}e^{-x+1}, \quad\forall s\ge 1, x\ge 1.
\label{supp:eq:incomplete_gamma_upper}
\end{align}
\end{lemma}

\begin{lemma}[{\citealp[Lemma~B.6]{Ryu--Ganguly--Kim--Noh--Lee2022}}]
\label{lem:GOVLemma4_smoothing} %
For a Lebesgue measure $\lambda$,
consider a $\lambda$-absolutely continuous probability measure $\mu$ with density $p$.
If the density $p$ is $(\sigma,S)$-H\"older continuous with constant over $\Bb(x,R)$ for $x\in\Real^d$ and some $\sigma\in[0,2]$,
then we have for any $r< R$,
\begin{align*}
\Bigl|\frac{\mu(\Bb(x,r))}{\lambda(\Bb(x,r))}-p(x)\Bigr|
&\le \frac{d}{\sigma+d}Sr^{\sigma},\text{ and}\\
\Bigl|\frac{\diff\mu(\Bb(x,r))}{\diff\lambda(\Bb(x,r))}-p(x)\Bigr| %
&\le Sr^{\sigma}.
\end{align*}
\end{lemma}

In what follows, let $\density_{U}(u)$ denote the density of a random variable $U$.
Further, we define a shorthand notation $\rvol_d(v)\defeq (v/\ups_d)^{1/d}$ to denote the radius of a $d$-dimensional ball of a volume $v$, where $\ups_d$ denotes the volume of a unit ball.
\begin{lemma}[{\citealp[Lemma~B.4]{Ryu--Ganguly--Kim--Noh--Lee2022}}]
\label{lem:GOVLemma2_pdf_gap_bound}
Suppose that $\nu_n=o(\sqrt{n})$ and $k=k_n=o(\sqrt{n})$.
Then there exists an absolute constant $C_0>0$ such that, for $n$ sufficiently large, we have
\begin{align*}
\abs{\rho_{U_{kn}(x)}(u)-\rho_{U_{k\infty}(x)}(u)}
&\le 
\Bigl|\frac{\diff\P(\Bb(x,\rvol(\frac{u}{n})))}{\diff\Leb(\Bb(x,\rvol(\frac{u}{n})))}-p(x)\Bigr|\\
&\qquad+2up(x)\Bigl|\frac{\P(\Bb(x,\rvol(\frac{u}{n})))}{\Leb(\Bb(x,\rvol(\frac{u}{n})))}-p(x)\Bigr|\\
&\qquad+\frac{C_0}{\Gamma(k)}\frac{(k^2+(up(x))^2)}{nu} (up(x))^{k}e^{-up(x)}.
\end{align*}
In particular, if $p$ is bounded such that  $p(\cdot)\le C_p<\infty$ and $p$ is $(\sigma,S)$-H\"older continuous over $\Bb(x,\rvol(\frac{u}{n}))$ for some $\sigma\in[0,2]$ and $S>0$,
we have
\begin{align*}
\abs{\rho_{U_{kn}(x)}(u)-\rho_{U_{k\infty}(x)}(u)}
&\le 
S\Bigl(1+2C_p \frac{d}{\sigma+d}u\Bigr)\Bigl(\frac{u}{n\ups_d}\Bigr)^{\bar{\sigma}_d}
\\ &\qquad
+ \frac{C_0}{\Gamma(k)}\frac{(k^2+(up(x))^2)}{nu} (up(x))^{k}e^{-up(x)}.
\end{align*}
\end{lemma}

\begin{lemma}[{\citealp[Lemma~B.11]{Ryu--Ganguly--Kim--Noh--Lee2022}}]
\label{lem:bound_kNN_ball_density}
If $p(\cdot)\le \Cab_p$ over $\overline{\Bb}(x,\rvol(\frac{u}{n}))$, we have
\[
\density_{kn}(u)
\le \frac{\Cab_p^k u^{k-1}}{\Gamma(k)}.
\]
\end{lemma}

\subsection{Proofs of Theorems~\ref{thm:function_of_density_knn_estimator_bias} and \ref{thm:function_of_density_knn_estimator_variance}}
We are now ready to prove the convergence rates of bias and variance of the truncated estimator.

\ThmKnnFunctionOfDensityBias*
\begin{proof}
Let $\density_{kn}(u)\defeq\rho_{U_{kn}(x)}(u)$, $\density_{k\infty}(u)\defeq\rho_{U_{k\infty}(x)}(u)=\frac{p^k}{\Gamma(k)}u^{k-1}e^{-up(x)}$, and 
let $\delta_{kn}(u)\defeq\rho_{U_{kn}(x)}(u)-\rho_{U_{k\infty}(x)}(u)$.
Recall that for $f(p)=f_\a(p)=\frac{p^\a}{\a}$, the estimator function is $\phi_k^{(f)}(u)=\frac{\Gamma(k)}{\Gamma(k-\a)}\frac{u^{-\a}}{\a}$ for $\a\neq 0$.
For $\a=0$, we can define the function as the limit of $f_\a(p)$ when $\a\to 0$, which is $f_0(p)=\log p$ and $\phi_k^{(f)}(u)=-\log u + \digamma(k)$.
\begin{align*}
&
\a \E_{\Xb}[\nnftr{k}(x;\Xb)]-f(p(x))
\\
&= \frac{\Gamma(k)}{\Gamma(k-\a)}\Bigl|\int_{\tau_n}^{\nu_n}\frac{ \density_{kn}(u)}{u^\a}\diff u-\int_0^{\infty}\frac{ \density_{k\infty}(u)}{u^\a}\diff u\Bigr|\\
&\le \frac{\Gamma(k)}{\Gamma(k-\a)}\Bigl(
\int_{0}^{\tau_n} \frac{\density_{k\infty}(u)}{u^\a}\diff u
+ \int_{\tau_n}^{\nu_n}\frac{\abs{\delta_{kn}(u)}}{u^\a}\diff u
+ \int_{\nu_n}^{\infty} \frac{\density_{k\infty}(u)}{u^\a}\diff u\Bigr).
\end{align*}
The first and the last terms integrating $\density_{k\infty}(u)/u$ over $(0,\tau_n)\cup (\nu_n,\infty)$ are bounded by incomplete gamma functions.
\begin{itemize}
\item The first term can be handled as follows:
\begin{align*}
\frac{\Gamma(k)}{\Gamma(k-\a)}\int_{0}^{\tau_n} \frac{\density_{k\infty}(u)}{u^\a}\diff u
= \frac{\gamma(k-\a,\tau_np(x))}{\Gamma(k-\a)} p(x)^{\a}
\le \frac{(\tau_n p(x))^{k-\a}}{\Gamma(k-\a+1)} p(x)^\a.
\numberthis\label{eq:first_term}
\end{align*}
Here, the inequality follows from Lemma~\ref{supp:lem:incomplete_gamma}, provided that $k> \a$.

\item For the last term that corresponds to $(\nu_n,\infty)$, we have
\begin{align*}
\frac{\Gamma(k)}{\Gamma(k-\a)}
\int_{\nu_n}^\infty \frac{\density_{k\infty}(u)}{u^\a}\diff u
&=\frac{\Gamma(k-\a,\nu_np(x))}{\Gamma(k-\a)}p(x)^\a
\\&
\le p(x)^\a (\nu_n p(x))^{k-\a-1} e^{-\nu_np(x)+1},
\numberthis\label{eq:last_term}
\end{align*}
where the inequality follows from Lemma~\ref{supp:lem:incomplete_gamma}, provided that $k\ge 2$ and $\nu_n p(x)>1$. 

\item We now bound the second term, integrating $\abs{\delta_{kn}(u)}/u^\a$ over $(\tau_n,\nu_n)$. 
Recall that from Lemma~\ref{lem:GOVLemma2_pdf_gap_bound} we have, for $k=o(\sqrt{n})$ and $u\in(0,\nu_n)$ with $\nu_n=o(\sqrt{n})$,
\begin{align*}
\frac{|\d_{kn}(u)|}{u^\a}
&=\frac{\abs{\rho_{U_{kn}(x)}(u)-\rho_{U_{k\infty}(x)}(u)}}{u^\a}\\
&\le 
\frac{S}{u^\a}\Bigl(1+2C_p \frac{d}{\sigma+d}u\Bigr)\Bigl(\frac{u}{n\ups_d}\Bigr)^{\bar{\sigma}_d}
+\frac{C_0}{\Gamma(k)}\frac{(k^2+(up(x))^2)}{nu^{\a+1}} (up(x))^{k}e^{-up(x)}.
\end{align*}
for $n$ sufficiently large.
To integrate the first term, let $w=\bar{\sigma}_d-\a$ and $D=\frac{2C_pd}{\sigma+d}$ temporarily.
Then, we have
\begin{align*}
&\frac{\Gamma(k)}{\Gamma(k-\a)}\int_{\tau_n}^{\nu_n} \frac{1+up(x)}{u^\a}\Bigl(\frac{u}{n}\Bigr)^{\bar{\sigma}_d}\diff u \\
&= \frac{S}{(n\ups_d)^{\bar{\sigma}_d}} \frac{\Gamma(k)}{\Gamma(k-\a)} \int_{\tau_n}^{\nu_n} \Bigl(1+\frac{2C_pd}{\sigma+d}u\Bigr)u^{\bar{\sigma}_d-\a}\diff u\\
&= \frac{S}{(n\ups_d)^{\bar{\sigma}_d}} \frac{\Gamma(k)}{\Gamma(k-\a)} \Bigl(\frac{\nu_n^{w+1}-\tau_n^{w+1}}{w+1}+D\frac{\nu_n^{w+2}-\tau_n^{w+2}}{w+2}\Bigr).
\numberthis\label{eq:second_term_first}
\end{align*}
Integrating the second term yields
\begin{align*}
&\frac{\Gamma(k)}{\Gamma(k-\a)}\frac{C_0}{\Gamma(k)} \int_{\tau_n}^{\nu_n}\frac{(k^2+(up(x))^2)}{nu^{\a+1}} (up(x))^{k}e^{-up(x)}\diff u\\
&\le \frac{C_0 p(x)^\a}{n\Gamma(k-\a)} \int_{0}^{\nu_n p(x)} t^{k-\a-1}e^{-t}(k^2+t^2)\diff t\\
&= \frac{C_0 p(x)^\a}{n\Gamma(k-\a)} \bigl\{k^2\gamma(k-\a,\nu_np(x))
+\gamma(k-\a+2,\nu_n,p(x))\bigr\}\\
&\le \frac{C_0 p(x)^\a}{n\Gamma(k-\a)} \Bigl(k^2 \frac{(\nu_n p(x))^{k-\a}}{k-\a}+\frac{(\nu_n p(x))^{k-\a+2}}{k-\a+2}\Bigr)\\
&= O\Bigl(\frac{p(x)^\a(\nu_np(x))^{k-\a+2}}{n}\Bigr),
\numberthis\label{eq:second_term_second}
\end{align*}
where the last inequality follows from Lemma~\ref{supp:lem:incomplete_gamma}.
\end{itemize}
Combining \eqref{eq:first_term}, \eqref{eq:last_term}, \eqref{eq:second_term_first}, and \eqref{eq:second_term_second}, 
we have
\begin{align*}
&\abs{\E_{\Xb}[\nnftr{k}(x;\Xb)]-f(p(x))}\\
&\le \frac{p(x)^\a}{\a} \Bigl\{
\frac{(\tau_n p(x))^{k-\a}}{\Gamma(k-\a+1)}
+ (\nu_n p(x))^{k-\a-1}e^{-\nu_np(x)+1}\\
&\qquad\qquad+\frac{C_0}{\Gamma(k-\a)} \frac{1}{n}\Bigl(
k^2 \frac{(\nu_np(x))^{k-\a}}{k-\a} + \frac{(\nu_np(x))^{k-\a+2}}{k-\a+2}
\Bigr)
\Bigr\}\\ 
&\quad+ \frac{1}{\a}\frac{S}{(n\ups_d)^{\bar{\sigma}_d}}\frac{\Gamma(k)}{\Gamma(k-\a)}\Bigl(
\frac{\nu_n^{w+1}-\tau_n^{w+1}}{w+1}
+D\frac{\nu_n^{w+2}-\tau_n^{w+2}}{w+2}
\Bigr)
\numberthis\label{eq:bias_bound_raw}\\
&= O\Bigl(
{p(x)^\a}\Bigl\{
(\tau_n p(x))^{k-\a}
+ (\nu_n p(x))^{k-\a-1}e^{-\nu_np(x)}
+ \frac{1}{n}(\nu_n p(x))^{k-\a+2}\Bigr\}\\
&\qquad\quad+\frac{1}{n^{\bar{\sigma}_d}} \Bigl(
\tau_n^{\bar{\sigma}_d-\a+1}\ones\bigl\{\bar{\sigma}_d<\a-1\bigr\} + \nu_n^{\bar{\sigma}_d-\a+2}\ones\bigl\{\bar{\sigma}_d\ge \a-2\bigr\}\Bigr)
\Bigr),
\end{align*}
where we hide multiplicative factors depending on $d$, $S$, $\sigma$, $k$, and $\a$ in the last line, assuming $k=O(1)$. For now, we assume $\a\neq 0$.

To simplify the rate further, we first note that it can be shown that {$\nu_n=\Omega(k\log k)$} guarantees the bound to vanish.\footnote{It can be shown by the following fact: Let $a\ge 1$ and $b>0$. Then, $x\ge 4a\log(2a)+2b$ implies $x\ge a\log x+b$.} 
In particular, for $k=O(1)$, if $\nu_n=\Omega((\log n)^{1+\eps})$ for some $\eps>0$, the integration over $(\nu_n,\infty)$ term decays faster than any polynomial rates.
Then, the rate simplifies to
\begin{align*}
&O\Bigl( p(x)^k\tau_n^{k-\a}+p(x)^{k+2}\frac{(\log n)^{(1+\eps)(k-\a+2)}}{n} 
\\&\qquad
+ \frac{\tau_n^{\bar{\sigma}_d-\a+1}}{n^{\bar{\sigma}_d}}
\ones\bigl\{\bar{\sigma}_d<\a-1\bigr\}
+\frac{(\log n)^{\bar{\sigma}_d-\a+2}}{n^{\bar{\sigma}_d}}
\ones\bigl\{\bar{\sigma}_d\ge \a-2\bigr\}\Bigr).
\end{align*}

Next, we control the last two terms, considering the following two cases separately.
\begin{itemize}
\item If $\bar{\sigma}_d\ge \a-1$, we can set $\tau_n=0$ (no lower truncation), and the rate simplifies to
\begin{align*}
O\Bigl( p(x)^{k+2}\frac{(\log n)^{(1+\eps)(k-\a+2)}}{n} +\frac{(\log n)^{\bar{\sigma}_d-\a+2}}{n^{\bar{\sigma}_d}}\Bigr).
\end{align*}

\item If $\bar{\sigma}_d<\a-1$, then we can choose an optimal $\tau_n$ by equating the first and second terms, which lead to the choice 
\[
\tau_n=\Th(n^{-\bar{\sigma}_d/(k-\bar{\sigma}_d-1)}).
\]
Since $k>\a$ is required, the denominator of the exponent $k-\bar{\sigma}_d-1$ is always positive when $\bar{\sigma}_d< \a -1$, and thus this choice is valid in the sense that $\tau_n= o(1)$.
With this choice, the rate becomes
\begin{align*}
O\Bigl( (p(x)^k\vee 1) n^{-\bar{\sigma}_d \frac{k-\a}{k-\bar{\sigma}_d-1}} 
+p(x)^{k+2}\frac{(\log n)^{(1+\eps)(k-\a+2)}}{n} 
+\frac{(\log n)^{\bar{\sigma}_d-\a+2}}{n^{\bar{\sigma}_d}}
\ones\bigl\{\bar{\sigma}_d\ge \a-2\bigr\}\Bigr).
\end{align*}
\end{itemize}
This proves the desired bound for $\a\neq 0$.
We can obtain the result for $f_0(p)=\log p$, by considering the limit when $\a\to 0$ in \eqref{eq:bias_bound_raw}.
For example, when 
$\bar{\sigma}_d\ge \a-1$, 
, the rate will become
\begin{align*}
O\Bigl( p(x)^{k+2}\frac{(\log n)^{(1+\eps)(k+2)}(\log \log n)}{n} 
+\frac{(\log n)^{\bar{\sigma}_d+2} (\log \log n)}{n^{\bar{\sigma}_d}}\Bigr).
\end{align*}
Note, therefore, that the rates in the statement hiding the multiplicative factors also hold for $\a=0$.
\end{proof}

\ThmKnnFunctionOfDensityVariance*

\begin{proof}
Let $\Cab_p(x,r)\defeq\sup\{p(x)\suchthat x\in \overline{\Bb}(x,r)\}$.
We have
\begin{align*}
\Var_\Xb(\nnftr{k}(x;\Xb))
&\le \E_{\Xb}[\nnftr{k}(x;\Xb)^2]\\
&= \E_{\Xb}\bigl[
\phi_k^{(f)}(U_k(x;\Xb))^2
\ones_{(\tau_n,\nu_n)}(U_{k}(x;\Xb))\bigr]\\
&= \E_{\Xb}\Bigl[
\Bigl(\frac{\Gamma(k)}{\Gamma(k-\a)}\frac{1}{\a U_k(x;\Xb)^\a}\Bigr)^2
\ones_{(\tau_n,\nu_n)}(U_{k}(x;\Xb))\Bigr]\\
&= \Bigl(\frac{\Gamma(k)}{\a\Gamma(k-\a)}\Bigr)^2
\int_{\tau_n}^{\nu_n}\frac{ \density_{kn}(u)}{u^{2\a}}\diff u\\
&\le \Bigl(\frac{\Gamma(k)}{\a\Gamma(k-\a)}\Bigr)^2
\int_{\tau_n}^{\nu_n}\frac{\Cab_p(x,\rvol(\frac{u}{n}))^k u^{k-1}}{\Gamma(k)} \frac{1}{u^{2\a}}\diff u \\
&\le \frac{\Gamma(k)}{\a^2\Gamma(k-\a)^2} 
\Cab_p\Bigl(x,\rvol\bigl(\frac{\nu_n}{n}\bigr)\Bigr)^k \int_{\tau_n}^{\nu_n} u^{k-2\a-1}\diff u\\
&\le \frac{\Gamma(k)}{\a^2\Gamma(k-\a)^2} 
\Cab_p^k \frac{\nu_n^{k-2\a}}{k-2\a}.
\end{align*}
Note that the second inequality follows from Lemma~\ref{lem:bound_kNN_ball_density}, and 
the third inequality holds for $n$ sufficiently large, so that $C_p(x,\rvol(\frac{\nu_n}{n}))\le C_p$.
Since we know that $\lim_{\a\to0}\frac{\Gamma(k)}{\Gamma(k-\a)}\frac{1}{\a u^\a} = -\log u + \digamma(k)$, the bound becomes 
\begin{align*}
\Var_\Xb(\nnftr{k}(x;\Xb))
&\le \E_{\Xb}[\nnftr{k}(x;\Xb)^2]\\
&\le (-\log \nu_n +\digamma(k))^2 \frac{(C_p\nu_n)^k}{\Gamma(k+1)}.
\end{align*}
This concludes the proof.
\end{proof}

\vskip 0.2in
\bibliography{ref}

\end{document}